\DeclareSymbolFont{cyrletters}{OT2}{wncyr}{m}{n}
\DeclareMathSymbol{\Sha}{\mathalpha}{cyrletters}{"58}
\newcommand{\ba}{\begin{align*}}
\newcommand{\ea}{\end{align*}}
\newcommand{\bC}{\ensuremath{{\mathbb{C}}}}
\newcommand{\Z}{\ensuremath{{\mathbb{Z}}}\xspace}
\renewcommand{\P}{\ensuremath{{\mathbb{P}}}}
\newcommand{\Q}{\ensuremath{{\mathbb{Q}}}}
\newcommand{\R}{\ensuremath{{\mathbb{R}}}}
\newcommand{\F}{\ensuremath{{\mathbb{F}}}}
\newcommand{\E}{\ensuremath{{\mathbb{E}}}}
\newcommand{\ra}{\rightarrow}
\newcommand\Hom{\operatorname{Hom}}
\newcommand\Aut{\operatorname{Aut}}
\newcommand\Gal{\operatorname{Gal}}
\newcommand\Nm{\operatorname{Nm}}
\newcommand\Sym{\operatorname{Sym}}
\newcommand\Prob{\operatorname{Prob}}
\newcommand\Sur{\operatorname{Sur}}
\newcommand\tensor{\otimes}
\newcommand\isom{\simeq}
\newcommand\Disc{\operatorname{Disc}}
\newcommand\GL{\operatorname{GL}}
\newcommand\Spec{\operatorname{Spec}}
\newcommand\cok{\operatorname{cok}}
\renewcommand\O{\mathcal{O}}
\newcommand\Pic{\operatorname{Pic}}
\newcommand{\bCl}{\operatorname{Cl}}
\newcommand\bq{\begin{equation}}
\newcommand\eq{\end{equation}}
\numberwithin{equation}{section}
\newtheorem{theorem}[equation]{Theorem}
\newtheorem{corollary}[equation]{Corollary}
\newtheorem{lemma}[equation]{Lemma}
\theoremstyle{remark}
\newtheorem{nts}{Note to self}
\newtheorem{oproblem}[equation]{Open Problem}
\newcommand\rk{{\operatorname{rk}}}
\newcommand\Sel{{\operatorname{Sel}}}
\newcommand\cO{\mathcal{O}}
\newcommand\calC{\mathcal{C}}
\newcommand\calF{F}
\newcommand{\cHur}{\mathsf{Hur}}
\title{Probability theory for random groups arising in number theory}
\author{Melanie Matchett Wood}
\address{Department of Mathematics\\
Harvard University\\
Science Center Room 325\\
1 Oxford Street\\
Cambridge, MA 02138 USA}  
\email{mmwood@math.harvard.edu}
\date{November 30, 2021}
\begin{document}
\begin{abstract}
We consider the probability theory, and in particular the moment problem and universality theorems, for random groups of the sort of that arise or are conjectured to arise in number theory, and in related situations in topology and combinatorics.  The distributions of random groups that are discussed  include those conjectured in the
 Cohen-Lenstra-Martinet heuristics to be the distributions of class groups of random number fields, as well as distributions of non-abelian generalizations, and those conjectured to be the distributions of Selmer groups of random elliptic curves.  For these sorts of distributions on finite and profinite groups, we survey what is known about the moment problem and universality, give a few new results including new applications, and suggest open problems.

\end{abstract}

\maketitle

\section{Introduction}\label{S:intro}
In this paper we will discuss the probability theory of random groups that arise in number theory and related areas, and the applications of that probability theory to other fields.  We focus on the moment problem and on universality results for these random groups.
While our focus is on the probability theory, we use potential applications in number theory, as well as topology and combinatorics, to motivate the kind of random groups on which we focus our probabilistic study.

One of the first motivating examples is the Cohen-Lenstra distribution on finite abelian $p$-groups.  Let $p$ be a prime and let $X_{CL}$ be a random finite abelian $p$-group such that 
$$\Prob(X_{CL}\isom A)=\frac{\prod_{i\geq 1}(1-p^{-i})}{|\Aut(A)|}$$ for each finite abelian $p$-group $A$. 
Let $C_B$ be the Sylow $p$-subgroup of the class group of a uniform random imaginary quadratic field $K$ with $|\Disc K|\leq B$. 
Then Cohen and Lenstra \cite{Cohen1984} conjectured that for each finite abelian $p$-group $A$,
\begin{equation}
\lim_{B\ra\infty} \Prob(C_B \isom A) =\Prob(X_{CL}\isom A),
\end{equation}
i.e. that the $C_B$ converge (in distribution) to $X_{CL}$.
This $X_{CL}$ is our starting example of a random group whose probability theory we wish to understand.  Throughout the paper, we will consider more examples, including those related to generalizations of $C_B$ such as when quadratic extensions are replaced by higher degree extensions or when the base field $\Q$ is replaced by another number field or $\F_q(t)$.
We will consider non-abelian analogs where we consider $\Gal(K^{un}/K)$, the Galois group of the maximal unramified extension of $K$, in place of the class group.
We will mention connections to analogous random groups arising in other fields, such as $\pi_1(M)$ or $H_1(M)$ for a random $3$-manifold $M$, or the Jacobian (a.k.a. sandpile group) of a random graph.

With these examples in mind, we first discuss the moment problem.  Given a random variable $X$ of a certain type, based on the type of random variable, we choose certain real-valued functions $f_0,f_1,\dots$ and call the averages $\E(f_k(X))$ the \emph{moments} of $X$.  When $X$ is real valued, we usually take $f_k(X)=X^k$,
but when $X$ is a random group we usually take $f_k(X)$ to be the number $\#\Sur(X,G_k)$ of surjective homomorphisms from $X$ to a group $G_k$.  The moment problem asks when the distribution of the random variable is determined uniquely from these moments.  This is very useful in applications because the moments are usually easier to access than a distribution itself, and we will discuss many applications to class groups and their generalizations.  

Next, we discuss universality questions in the sense of the central limit theorem.  We ask, when and how can we build a random group from many independent inputs, such that in a limit, the random group is insensitive to the distribution of the random inputs?  When this happens, the output distribution is of course a natural one (as in the normal distribution in the Central Limit Theorem), and it tells us that such distributions are likely to arise in nature.  This can help provide further motivation and context for conjectures in number theory.  
As the theory develops, we expect there will be further applications of these universality results to other fields. 
 
For both topics, we will review what is known for random abelian and non-abelian groups, mention many applications, prove a few new results and applications, and suggest open problems.

\subsection{Notation and Conventions}
We use $\E$ to denote the expectation of a real-valued random  variable.

For a finite set $S$, we use $\#S$ or $|S|$ to denote the size of the set.

We write $\F_q$ for the finite field with $q$ elements.

For a set of primes $P$, a $P$-group is a group whose order is a product of powers of primes in $P$, and a pro-$P$ group is a profinite group all of whose continuous finite quotients are $P$-groups.  The pro-$P$ completion of a group is the inverse limit of all of its $P$-group quotients (and is a pro-$P$ group).

We use $\Hom$, resp. $\Sur$ to denote homomorphisms, resp. surjective homomorphisms, always in the category  of whatever  the objects are in, e.g. for profinite groups we take continuous homomorphisms, and for $R$-modules we take $R$-module homomorphisms.  Sometimes we use a subscript, e.g. $\Sur_R(A,B)$, as a reminder of the category.  We use $\Aut$ to denote automorphisms with the same caveats.

When we take a random finite group, it is always with the discrete $\sigma$-algebra on the set of finite groups.

For random variables $Y,X_0,\dots$ with respect to a Borel $\sigma$-algebra, we say the $X_n$ weakly converge in distribution to $Y$ if for every open set $U$ we have $\lim \inf \Prob(X_n\in U) \geq \Prob(Y\in U)$.  By the Portmanteau theorem, this is equivalent to many other conditions.  
In this paper, in the topologies we consider, every open set is a countable disjoint union of  basic open sets (used to define the topology), and each basic open set is also closed.
In these settings, weak convergence in distribution is equivalent to having, for each basic open set $U$, $\lim_{n\ra\infty} \Prob(X_n\in U) =\Prob(Y\in U)$
(see \cite[Proof of Theorem 1.1]{Liu2020}).

For an abelian group $A$, we have $\wedge^2 A$ is the quotient of $A\tensor A$ by the subgroup generated by elements of the form $a\tensor a$ (and for an $R$-module $A$, we define $\wedge^2_R A$ similarly with the tensor product over $R$) and
$\Sym^2 A$ is the quotient of $A\tensor A$ by the subgroup generated by elements of the form $a\tensor b-b\tensor a$.  

For a group (resp. profinite group) $G$ and elements $g_1,\dots\in G$, we write $\langle g_1,\dots \rangle$ for the normal subgroup 
(resp. closed normal subgroup)
generated by $g_1,\dots$.

For a function $f(x)$, we write $f(x)=O(g(x))$ to mean that there exists a constant $C$ such that for all $x$ such that $f(x)$ is defined, we have $|f(x)|\leq C g(x)$.

For a group $G$ with an action of a group $\Gamma$, we write $G^\Gamma$ for the invariants, i.e. elements of $G$ that are fixed by every element of $\Gamma$.

In the distributions of interest from number theory, there will usually be a random number field, or random elliptic curve, or some such object behind the scenes.
In these situations, there are a countable number of objects of interest (such as imaginary quadratic  number fields), and we consider some enumeration of them such that there are a finite number up to some bound $B$, and then we take a uniform random object up to bound $B$, and consider the limit of these distributions as $B\ra\infty$.  We do not wish to suggest that the uniform distribution is the only distribution on a finite set.  Indeed, the entire point of Section~\ref{S:Universality} is based on the fact that there are many non-uniform distributions.  Even beyond the question of the distribution on the objects up to bound $B$, there is still a question of which enumeration one takes and this can have interesting and important effects (e.g. see \cite{Wood2010, Bartel2020}).  However, since we are using the examples from number theory mainly as motivation, in this paper we will usually be very brief or not mention at all how exactly we take the random number theoretic objects.

\subsection*{Acknowledgments} 
Much of this paper is based on the author's work with co-authors Nigel Boston, Hoi Nguyen, Yuan Liu, Will Sawin, Weitong Wang, David Zureick-Brown, and we thank them for all the insights gained from these collaborations.  The outlook in this paper also benefited from useful conversations with Jordan Ellenberg, Aaron Landesman, Sam Payne, Bjorn Poonen, Philip Matchett Wood.  The author would like to thank Yuan Liu, Hoi Nguyen, and Will Sawin for useful feedback on an earlier draft of this paper. 
This work was done with the support of a Packard Fellowship for Science and Engineering, National Science Foundation Waterman Award DMS-2140043, and  National Science Foundation grant DMS-2052036.

\section{The moment problem}

 In probability, one often detects the distribution of a random variable by its moments, i.e. the averages of certain functions of the random variable.  Most classically, the moments of a random variable $X\in\R$ are the averages $\E(X^k)$, indexed by natural numbers $k$, and the (mixed) moments of a random variable 
$(X_1,\dots,X_n)\in\R^n$ are the averages $\E(X_1^{k_1}\cdots X_n^{k_n})$, indexed by $n$-tuples of natural numbers $(k_1,\dots,k_n)$.

The moment problem asks whether moments determine a unique distribution, and results on the moment problem, such as the following, are foundational in probability theory. 
\begin{theorem}[Carleman's condition]
Let $X$ be a random real number such that $M_k=\E(X^k)$ is finite for all integers $k\geq 0$.  Then  if
\begin{equation}\label{E:Carl}
\sum_{k\geq 1} M_{2k}^{-\frac{1}{2k}}=\infty,
\end{equation} 
then there is a unique distribution for a random real number $Y$ such that $\E(Y^k)=M_k$ for all $k\geq 0$.  In particular, if $M_k=O(e^{k})$, then \eqref{E:Carl} holds.
\end{theorem}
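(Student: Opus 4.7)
The plan is to prove uniqueness by showing that under Carleman's condition the moment sequence determines the characteristic function $\phi(t) = \E(e^{itX})$, which in turn determines the distribution by standard Fourier uniqueness.

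I would first dispose of the ``in particular'' clause. If $M_k = O(e^k)$, then $M_{2k}^{-1/(2k)}$ stays bounded below by a positive constant, so \eqref{E:Carl} holds trivially. Moreover, in this regime the formal series $\sum_k \frac{(it)^k M_k}{k!}$ has infinite radius of convergence; using Fubini together with the bound $\E(|X|^k) \leq M_{2k}^{1/2}$ from Cauchy--Schwarz, one verifies that $\phi(t)$ actually equals this series for all $t \in \R$. Thus $\phi$ is entire and uniquely determined by the $M_k$.

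For the full Carleman case, I would follow the classical Stieltjes-transform approach. Define $F_\mu(z) = \int \frac{d\mu(x)}{x - z}$ for $z$ in the upper half-plane; this is holomorphic and recovers $\mu$ through the Stieltjes--Perron inversion formula. Expanding $1/(x-z)$ as a geometric series in $1/z$ and keeping an explicit remainder shows that $F_\mu(z)$ admits the asymptotic expansion $-\sum_{k\geq 0} M_k z^{-k-1}$ as $|z| \to \infty$ in any fixed cone in the upper half-plane. Two candidate distributions $\mu, \nu$ with the same moments then produce Stieltjes transforms with identical asymptotic expansions, and one wants to conclude $F_\mu = F_\nu$, from which $\mu = \nu$ follows by inversion.

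The main obstacle is this final step: pinning down a holomorphic function from its asymptotic expansion at infinity. This is a quasi-analyticity statement, and the condition \eqref{E:Carl} is exactly what is needed to apply the Denjoy--Carleman theorem. The standard route translates the problem into one about a $C^\infty$ function on a real interval whose successive derivatives satisfy growth bounds comparable to the $M_{2k}$, and then invokes the Denjoy--Carleman criterion to deduce uniqueness from vanishing of all Taylor coefficients. This quasi-analyticity input is the deepest part of the argument and is the real source of the precise shape of \eqref{E:Carl}.
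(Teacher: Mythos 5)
The paper states Carleman's condition as a classical fact and gives no proof of it, so there is no internal argument to compare against; I will assess your proposal on its own terms. Your handling of the ``in particular'' clause is correct: $M_k=O(e^k)$ makes $M_{2k}^{-1/(2k)}$ bounded below by a positive constant, so \eqref{E:Carl} holds trivially (the aside about $\phi$ being entire is true but not needed for that clause).

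For the main claim there is a genuine gap, and it is exactly the step you yourself flag as ``the deepest part.'' After setting up $F_\mu,F_\nu$ and correctly deriving that $D=F_\mu-F_\nu$ has remainder $|D(z)|\le 2M_{2n}^{1/2}/(\delta|z|^{n+1})$ in a cone $\Im z\ge\delta|z|$, you assert that Carleman's condition forces $D\equiv 0$ by invoking Denjoy--Carleman --- but that theorem concerns $C^\infty$ functions of a real variable on an interval, not holomorphic functions in a half-plane with a prescribed asymptotic series at infinity. The bridge between the two (a Phragm\'en--Lindel\"of/Watson-type quasi-analyticity theorem for asymptotic expansions in a sector) is itself a nontrivial result you would have to state and prove, and you do not indicate how to do so. Your closing sentence, about a $C^\infty$ function on a real interval whose derivatives are bounded in terms of the $M_{2k}$, in fact describes the characteristic function rather than anything naturally extracted from $F_\mu$, which points to the cleaner route you opened with and then abandoned: work directly with $\phi(t)=\E(e^{itX})$, note $\phi^{(k)}(0)=i^kM_k$ and $|\phi^{(k)}(t)|=|\E\bigl((iX)^ke^{itX}\bigr)|\le\E|X|^k\le M_{2k}^{1/2}$ by Cauchy--Schwarz, so $\phi$ lies on all of $\R$ in the Carleman class with weights $B_k=M_{2k}^{1/2}$. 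Since $\sum_k B_k^{-1/k}=\sum_k M_{2k}^{-1/(2k)}=\infty$ is precisely the Denjoy--Carleman criterion for quasi-analyticity of that class, $\phi_\mu-\phi_\nu$ having all derivatives zero at $0$ forces $\phi_\mu\equiv\phi_\nu$, and L\'evy inversion gives $\mu=\nu$. I would drop the Stieltjes-transform detour and write this argument out.
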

This kind of uniqueness result is useful in a situation when we have a conjectural distribution, know its moments, and then can prove some random variable is distributed as conjectured by showing it has those moments.  In other situations, we have an unknown distribution, compute its moments, and then recognize those as moments of a well-known distribution, and can use a uniqueness result to show our distribution matches the well-known one.  

In many applications we have not a single random variable, but rather a sequence of random variables, and we seek their limiting distribution.  For this, we require a uniqueness theorem that is \emph{robust}, in the sense that we can prove that a sequence of random variables whose moments converge to certain values must converge in distribution to a certain limit.  

Now we will clarify some, slightly informal, language to talk about different aspects of the moment problem.  Suppose we are considering random variables taking values in some set, and a sequence of real-valued functions $f_0,f_1,f_2,\dots$ on that set whose averages give the moments of the random variables.  We say we have \emph{uniqueness} in the moment problem for moments $M_k\in \R$, if the following holds: for any two random variables $X,Y$ under consideration, if for all $k$ we have $\E(f_k(X))=\E(f_k(Y))=M_k$, then $X$ and $Y$ have the same distribution.   We say we have \emph{robust uniqueness} in the moment problem for moments $M_k\in \R$, if the following holds: for any sequence of random variables $Y,X_1,X_2,X_3,\dots$ under consideration, if for all $k$ we have $\lim_{n\ra\infty}\E(f_k(X_n))=\E(f_k(Y))=M_k$, then the $X_n$ 
weakly converge in distribution to $Y$.  We have \emph{existence} in the moment problem for moments $M_k\in \R$, if we know there exists a random variable $X$ with  $\E(f_k(X))=M_k$ for all $k$, and we have \emph{construction} in the moment problem for moments $M_k\in \R$ if we have existence and can moreover explicitly describe $X$ by giving useful formulas for its distribution on enough subsets to generate the underlying $\sigma$-algebra.

\subsection{Robust uniqueness for random abelian groups}
For example, Fouvry and Kl\"uners \cite{Fouvry2006} determined the distribution of the $4$-ranks $(2C_B)[2]$, where $C_B$ is the $2$-Sylow subgroup of the class group of a random imaginary quadratic field as in Section~\ref{S:intro}, as $B\ra\infty$.
Fouvry and Kl\"uners proved the following result (which covers all aspects of the moment problem for certain average values).
\begin{theorem}[{{\cite[Theorem 1]{Fouvry2006a}}}]\label{T:FK}
If $p$ is a prime and $X_1,X_2,\dots$ are random finite dimensional $\F_p$-vector spaces such that for every integer $k\geq 0$, we have 
$$\lim_{n\ra\infty} \E(\#\Sur(X_n,\F_p^k))=1,$$ then for each integer $r\geq 0$, we have
$$
\lim_{n\ra\infty} \Prob(X_n\isom \F_p^r)=p^{-r^2}\frac{\prod_{j=r+1}^\infty (1-p^{-j})}{\prod_{j=1}^r (1-p^{-j})}.
$$
\end{theorem}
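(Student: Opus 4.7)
The plan is to proceed in three stages: establish tightness from the moment bounds, extract subsequential limits satisfying the moment equation in the limit, and prove uniqueness via a $q$-binomial M\"obius inversion.

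Let $q_{n,r} = \Prob(X_n \isom \F_p^r)$ and write $\#\Sur(\F_p^r, \F_p^k) = |\GL_k(\F_p)| \binom{r}{k}_p$, where $\binom{r}{k}_p$ is the Gaussian binomial, vanishing for $r<k$ and at least $1$ for $r \geq k$. The hypothesis becomes $\sum_{r \geq k} q_{n,r} \binom{r}{k}_p \to b_k := |\GL_k(\F_p)|^{-1}$ for each $k$. Since $\binom{r}{k}_p \geq 1$ on its support, this yields $\Prob(\dim X_n \geq k) \leq \sum_{r \geq k} q_{n,r}\binom{r}{k}_p = b_k + o_n(1) = O(p^{-k^2})$ uniformly in $n$, giving tightness. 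A diagonal argument then shows that every subsequence admits a further subsequence along which $q_{n_\ell, r} \to q_r$ for all $r$, with $q_r \geq 0$ and $\sum_r q_r = 1$; dominated convergence (justified by the uniform tail bound) then gives
\[
\sum_{r \geq k} q_r \binom{r}{k}_p = b_k \qquad \text{for every } k \geq 0.
\]

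The crux is uniqueness: among nonnegative sequences satisfying this system, only $q_r = q_r^* := p^{-r^2} \prod_{j=r+1}^\infty(1-p^{-j})/\prod_{j=1}^r(1-p^{-j})$ occurs. The matrix $A_{k,r} = \binom{r}{k}_p$ is upper-triangular in $r$ with $1$'s on the diagonal, and its formal inverse is
\[
(A^{-1})_{r,k} = (-1)^{k-r} p^{\binom{k-r}{2}} \binom{k}{r}_p,
\]
as one checks from the Vandermonde-type identity $\binom{k}{r}_p \binom{s}{k}_p = \binom{s}{r}_p \binom{s-r}{k-r}_p$ together with the $q$-binomial identity $\sum_{j=0}^m (-1)^j p^{\binom{j}{2}} \binom{m}{j}_p = \delta_{m,0}$ (the $q$-binomial theorem at $x=1$). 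The bound $q_r \leq b_r$ inherited from the moment equation at $k=r$ makes the inversion series absolutely convergent, so
\[
q_r = \sum_{k \geq r} (-1)^{k-r} p^{\binom{k-r}{2}} \binom{k}{r}_p b_k,
\]
and a short $q$-Pochhammer manipulation with $b_k = p^{-k^2}/\prod_{j=1}^k (1-p^{-j})$ rewrites this sum as $q_r^*$. Since every subsequential limit equals $q^*$, the whole sequence $q_{n,r}$ converges to $q_r^*$.

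The main obstacle is this uniqueness step. The infinite triangular system can be inverted rigorously only because the moments themselves enforce super-exponential decay $q_r = O(p^{-r^2})$, which is precisely what guarantees absolute convergence of the alternating inversion series; without that input, mere formal inversion leaves open nontrivial solutions (e.g.\ coming from entire functions of order $0$ vanishing on $\{p^k\}$). The final identification of the inverted sum with the explicit product for $q_r^*$ is a routine $q$-Pochhammer calculation.
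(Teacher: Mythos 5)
Your proof is correct, and it takes a genuinely different route from the one the paper attributes to Fouvry and Kl\"uners. The theorem is cited rather than proved here, but the surrounding discussion explains that the cited argument first translates the Sur-moments into the Hom-moments $\E(\#\Hom(X,\F_p^k))=\E(|X|^k)$ --- the classical moments of the real-valued random variable $|X|$ --- and then inverts a truncated Vandermonde-type system in the style of Heath-Brown (see the proof of Theorem~\ref{T:only2} for exactly this technique and its quantitative growth threshold $\E(|X|^k)=O(p^{k^2/2})$). You instead stay with the Sur-moments and exploit the strictly upper-triangular structure of $A_{k,r}=\binom{r}{k}_p$, inverting it exactly by $q$-binomial M\"obius inversion rather than by truncation and estimation. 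What this buys you is a transparent closed-form inverse, a clean source of the needed decay ($q_r\le b_r$ directly from the $k=r$ equation, which makes the Fubini exchange legitimate), and no matrix-determinant estimates. What it gives up is generality: the Vandermonde/Hom route is what yields the robust-uniqueness statement for all sequences of moments growing up to $O(p^{k^2/2})$, whereas your inversion is tailored to the specific $b_k=|\GL_k(\F_p)|^{-1}$. Two small corrections worth making: the identity $\binom{k}{r}_p\binom{s}{k}_p=\binom{s}{r}_p\binom{s-r}{k-r}_p$ is the $q$-analogue of the subset-of-a-subset (``trinomial revision'') identity, not $q$-Vandermonde; and the dominated-convergence step needs a sentence of care, since the bound $q_{n,r}\le b_r+o_n(1)$ is not uniform in $(n,r)$ jointly --- the standard fix is to bound the tail $\sum_{r\ge R}q_{n,r}\binom{r}{k}_p$ by comparing $\binom{r}{k}_p$ to $\binom{r}{k'}_p$ for a larger $k'$ and invoking the $k'$-th moment hypothesis, then let $k'\to\infty$. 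Finally, for the last identification of the inverted series with $q_r^*$, you could replace the ``routine $q$-Pochhammer manipulation'' by simply citing the Cohen--Lenstra computation that $X_{CL}/pX_{CL}$ has exactly these Sur-moments (\cite[Theorem 6.3, Corollary 6.5]{Cohen1984}), which is what the paper itself does to supply existence.
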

The distribution and averages in Theorem~\ref{T:FK} are known to occur as $\Prob(X_{CL}/pX_{CL}\isom \F_p^r)$ and  
$\E(\#\Sur(X_{CL}/pX_{CL},\F_p^k))$, respectively,
for the random group $X_{CL}$ introduced in Section~\ref{S:intro}
(see \cite[Theorem 6.3, Corollary 6.5]{Cohen1984}), so there does exist a random variable with these averages and its distribution can be explicitly described.
In the paper \cite{Fouvry2006}, Fouvry and Kl\"uners determined the averages $\E(\#\Sur((2C_B)[2],\F_2^k))$, and then applied Theorem~\ref{T:FK} to determine the distribution of $4$-ranks of class groups of imaginary quadratic fields (and did the analogous work for class groups of real quadratic fields).   

Fouvry and Kl\"uners actually write $\prod_{0\leq i <k}(p^{\rk_p(X_n)-p^i})$, and we have interpreted that as the number of surjective homomorphisms  $\#\Sur(X_n,\F_p^k)$.
In \cite{Fouvry2006a}, Fouvry and Kl\"uners translate the knowledge of the averages of $\prod_{0\leq i <k}(p^{\rk_p(X)-p^i})$ for all $k$ to
the knowledge of the averages of $p^{\rk_p(X)k}=\#\Hom(X,\F_p^k)$ for all $k$ (which can be done  by a finite sum over the subgroups of $\F_p^k$).  
These latter averages are the classical moments of the random number $p^{\rk_p(X)}=|X|$.
When our random groups get more complicated (and in particular non-abelian), we will not be able to capture the entire data of our groups so simply in a number, or even a sequence of numbers,  but the functions $\#\Sur(-,G)$ or $\#\Hom(-,G)$ will continue to be important and convenient functions whose averages we will call the moments (or Sur-moments, Hom-moments) of a random group. 
(See \cite[Section 3.3]{Clancy2015} for a discussion about the fact that the Hom-moments for finite abelian $p$-groups are classical mixed moments of certain numerical invariants of the groups.) 
 The relationship between the Hom-moments and the Sur-moments is  analogous to the relationship of the moments $\E(X^k)$ and the factorial moments
$\E(X(X-1)\cdots(X-k+1))$ of a random real number---knowledge of either kind of moments for $k\leq m$ easily gives knowledge of the other kind for $k\leq m$, and the choice of which to use mainly depends which is more convenient for the problem at hand.

Fouvry and Kl\"uners's proof of the robust uniqueness part of Theorem~\ref{T:FK} actually works whenever 
$$\E(\#\Hom(X,\F_p^k))=\E(|X|^k)=O(p^{k^2/2})$$
(see \cite[Proposition 3]{Fouvry2006a}), echoing the refrain that moments that do not grow too quickly determine a distribution. 
(Note that in this generality we are not claiming existence of a distribution, but only uniqueness.)
Such moments are too large to use Carleman's condition to conclude the distribution of $|X|$ as a real number, and indeed there are different distributions of real numbers that give the same moments with this order of growth (e.g. various distributions that have the same moments as the log-normal distribution).  However, in our setting of course  $|X|$ is restrained to be a power of $p$.  

For a random cyclic cubic field $K$, with class group $\bCl_K$ with $3$-torsion $\bCl_K[3]$, 
Klys \cite{Klys2020} found the asymptotic  moments of $\bCl_K[3]/\bCl_K[3]^{\Gal(K/\Q)}$, and then applied the more general form of Theorem~\ref{T:FK} to determine the limiting distribution of $\bCl_K[3]/\bCl_K[3]^{\Gal(K/\Q)}$.

Ellenberg, Venkatesh, and Westerland prove the following.
\begin{theorem}[{\cite[Proposition 8.3]{Ellenberg2016}}]\label{T:EVW}
If for each $n\geq 0$, we have a random abelian $p$-groups $X_n$ such that for every abelian $p$-group $A$ we have,
$$\lim_{n\ra\infty} \E(\#\Sur(X_n,A))=1,$$
then the $X_n$ weakly converge in distribution to $X_{CL}$, i.e. for every abelian $p$-group $B$, we have
$$
\lim_{n\ra\infty} \Prob(X_n\isom B)= \Prob(X_{CL}\isom B)=\frac{\prod_{i\geq 1} (1-p^{-i})}{|\Aut(B)|}.
$$
\end{theorem}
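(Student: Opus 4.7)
My plan is to combine a tightness argument with a M\"obius-inversion uniqueness argument, in the spirit of the authors' strategy.

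\emph{Tightness and subsequential limits.} I would first establish that $\{X_n\}$ is tight as a sequence of probability measures on the countable set of iso classes of finite abelian $p$-groups (with the discrete topology). From the bound $\#\Sur((\Z/p)^r, (\Z/p)^k) \geq p^{(r-k)k}$, valid for $r\geq k$, Markov's inequality gives
$$\Prob(\rk_p(X_n) \geq R) \leq p^{-(R-k)k}\E(\#\Sur(X_n, (\Z/p)^k)),$$
whose right-hand side is bounded in $n$ (since the moment tends to $1$) and tends to $0$ as $R\to\infty$. A parallel estimate using surjections onto $\Z/p^E$, together with $\#\Sur(X_n, \Z/p^E) \geq |\Aut(\Z/p^E)|$ whenever $X_n$ has exponent at least $p^E$, controls the exponent. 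Together these bounds, combined with the automatic tightness of any finite initial segment, show that $\{X_n\}$ is tight. By Prokhorov, any subsequence admits a further sub-subsequence $X_{n_j}$ converging in distribution to some random finite abelian $p$-group $X$, and the same uniform tail bounds give the dominated convergence needed to take the limit in
$$\E(\#\Sur(X_{n_j}, A)) = \sum_B \Prob(X_{n_j}\simeq B)\#\Sur(B, A),$$
yielding $\E(\#\Sur(X, A)) = 1$ for every finite abelian $p$-group $A$.

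\emph{Uniqueness via M\"obius inversion.} It therefore suffices to prove: any random finite abelian $p$-group $X$ with $\E(\#\Sur(X, A)) = 1$ for every $A$ is distributed as $X_{CL}$. Writing $P_B = \Prob(X\simeq B)$ and $M_A = \E(\#\Sur(X, A))$, so that $M_A = \sum_B P_B\#\Sur(B, A)$, I would produce universal coefficients $c(B, A)$ with
$$P_B = \sum_A c(B, A) M_A,$$
the sum being absolutely convergent under the tail bounds that $P$ inherits from tightness. These $c(B, A)$ invert the Hall matrix $(\#\Sur(B, A))$ and can be computed explicitly via M\"obius inversion on the poset of finite abelian $p$-groups under quotienting, together with the identity $\sum_B (|\Aut B|)^{-1}\#\Sur(B, A) = c_\infty^{-1}$ underlying the Cohen-Lenstra measure. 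Since $M_A = 1$ for all $A$, this forces $P_B = \Prob(X_{CL}\simeq B)$ for every $B$. Applied to each subsequential limit from the first step, this shows the full sequence $\{X_n\}$ converges in distribution to $X_{CL}$.

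\emph{Main obstacle.} The crux lies in the inversion identity. While Hall's M\"obius function on the lattice of finite abelian $p$-groups is classical, one must verify absolute summability of $\sum_A |c(B,A)|$ and justify interchanging sums for a general $P$; this requires $p^{-\binom{r}{2}}$-type decay estimates on subgroup counts of abelian $p$-groups. A conceptually cleaner alternative I would pursue if the direct inversion becomes unwieldy is to reduce to bounded exponent by studying $Y_n^{(k)} := X_n / p^k X_n$: the exponent tail bound from the first step controls the error of replacing $X_n$ by $Y_n^{(k)}$, and on groups of exponent $\leq p^k$ the inversion becomes a finite-dimensional problem in the style of Theorem~\ref{T:FK} (generalized from $\F_p$-vector spaces to abelian $p$-groups whose partitions have parts bounded by $k$); letting $k\to\infty$ then yields the full result.
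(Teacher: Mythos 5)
The paper cites this result from Ellenberg--Venkatesh--Westerland \cite{Ellenberg2016} without reproducing the proof, so there is no in-paper argument to compare against; I evaluate your proposal on its own terms.

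Your overall architecture (tightness of the sequence, passage to a subsequential limit whose Sur-moments are all $1$, then uniqueness of a distribution with those moments) is the right one, and your tightness step is essentially sound: the Markov bound $\Prob(\rk_p X_n \ge R) \le p^{-(R-k)k}\E(\#\Sur(X_n,\F_p^k))$ together with the exponent bound via $\#\Sur(B,\Z/p^E)\ge \phi(p^E)$ does give tightness over the countable discrete space of finite abelian $p$-groups, and those same tails supply the domination needed to pass the moment identity to the limit.

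The gap is in the uniqueness step, and it is the heart of the theorem. First, verifying ``absolute summability of $\sum_A |c(B,A)|$'' is not enough to justify the interchange you need; one must control the \emph{double} series $\sum_{A,B'} |c(B,A)|\,\Prob(X\isom B')\,\#\Sur(B',A)$, which involves the unknown distribution. The $p^{-\binom{r}{2}}$-type decay of the Hall--M\"obius coefficients is the right ingredient (and matches the borderline growth $|\wedge^2 A|$ in Theorem~\ref{T:Wmom}), but you neither establish the required decay of the probabilities $P_{B'}$ needed for the double sum nor close the interchange. Second, your fallback is mischaracterized: groups of exponent $\leq p^k$ still form an infinite family (rank is unbounded), so restricting exponent does \emph{not} turn the inversion into a finite-dimensional linear algebra problem. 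What actually makes the Fouvry--Kl\"uners/Heath-Brown/Wood arguments work is a \emph{truncated} inversion in the style of Theorem~\ref{T:only2}: truncate both the system and the unknown to a finite set, invert an explicit finite (Vandermonde-like) matrix, and show the truncation error tends to zero using the sharp decay of the inverse coefficients against the assumed bound on the moments. That estimate --- not a single absolutely convergent M\"obius series --- is the mechanism, and it is what your proposal leaves unproved. Until that is carried out, the uniqueness step, and hence the proof, is incomplete.
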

Ellenberg, Venkatesh, and Westerland use Theorem~\ref{T:EVW}, along with a determination of certain limiting moments of class groups of imaginary quadratic extensions of $\F_q(t)$, to prove that in a limit where the discriminant goes to infinity and then $q$ goes to infinity, that the $\ell$-Sylow subgroups of these class groups are as predicted by the Cohen-Lenstra heuristics for any odd prime $\ell$, as long as $\ell\nmid q-1$ \cite[Theorem 1.2]{Ellenberg2016}.
The work of Ellenberg, Venkatesh, and Westerland also particularly pioneered the idea that it is useful to consider these averages of surjection counts to be moments.  
 
If we would like to consider more general finite abelian groups, and also distributions that have other moments, we have the following theorem by the author.  (The cited results are stated with stronger bounds on the $M_A$, but one can see that all that is used in the proof is the hypotheses below.)
\begin{theorem}[see {\cite[Thm 8.3, proof of Cor 9.2]{Wood2017}}]\label{T:Wmom}
Let $P$ be a finite set of primes, and let $\mathcal{A}$ be the set of finite abelian $P$-groups.  
Let $M_A\in \R$ for each $A\in \mathcal{A}$ such that $M_A=O(|\wedge^2 A|)$.
Let $Y,X_1,X_2,\dots$ be random groups in $\mathcal{A}$.
If for every $A\in \mathcal{A}$, we have
$$
\lim_{n\ra\infty} \E(\#\Sur(X_n,A))= \E(\#\Sur(Y,A))=M_A,
$$
then the $X_n$ weakly converge in distribution to $Y$, i.e. for every $B\in \mathcal{A}$,
$$
\lim_{n\ra\infty} \Prob(X_n\isom B)=\Prob(Y\isom B).
$$
\end{theorem}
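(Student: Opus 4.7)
The plan is to reduce weak convergence to pointwise convergence of the atomic probabilities and then recover each $\Prob(X\isom B)$ from the Sur-moments via an absolutely convergent inversion formula.

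First, because $P$ is finite, $\mathcal{A}$ is countable, and in the discrete topology on $\mathcal{A}$ each singleton $\{B\}$ is clopen. By the topological remark made in the introduction, it therefore suffices to prove that $\lim_{n\to\infty}\Prob(X_n\isom B)=\Prob(Y\isom B)$ for every $B\in\mathcal{A}$.

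Next, I would establish a M\"obius-type inversion of the shape
$$
\Prob(X\isom B) \;=\; \frac{1}{|\Aut(B)|}\sum_{A\in\mathcal{A}} c_{A,B}\,\E(\#\Sur(X,A)),
$$
valid for any random $X\in\mathcal{A}$ whose Sur-moments grow at most like $|\wedge^2 A|$, with universal coefficients $c_{A,B}$ depending only on the finite abelian $P$-groups $A$ and $B$. The starting point is the tautology $\E(\#\Sur(X,A))=\sum_{B'}\Prob(X\isom B')\cdot\#\Sur(B',A)$, whose matrix $(\#\Sur(B',A))$ becomes block-triangular once one restricts attention to groups of exponent dividing a fixed power of the primes in $P$; iterated inclusion--exclusion on the associated finite posets then produces the coefficients $c_{A,B}$. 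The quantitative input I would need is a polynomial bound on $|c_{A,B}|$ which, combined with the rapid decay of $1/|\Aut(A)|$ for abelian $P$-groups, makes the product $|c_{A,B}|\cdot|\wedge^2 A|/|\Aut(A)|$ summable in $A$, so the displayed series is absolutely convergent whenever $\E(\#\Sur(X,A))=O(|\wedge^2 A|)$.

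With the inversion in hand I would split the series at a threshold $|A|\le N$. The finite partial sum converges as $n\to\infty$ to the analogous partial sum for $Y$, by the hypothesis that $\E(\#\Sur(X_n,A))\to M_A$ for each individual $A$. The tail over $|A|>N$ is bounded, uniformly in $n$ (after absorbing the convergence of the Sur-moments into a slightly larger multiplicative constant), by a quantity $o_N(1)$; letting $N\to\infty$ then yields the desired pointwise convergence, and hence the theorem.

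The main obstacle is the tail estimate, which is the whole reason for the hypothesis $M_A=O(|\wedge^2 A|)$. The threshold $|\wedge^2 A|$ is essentially sharp: even slightly faster growth would allow probability mass to escape to arbitrarily large groups in ways undetected by any single moment, and tightness of the $X_n$ would fail. Verifying the polynomial bound on $|c_{A,B}|$ carefully enough that $|c_{A,B}|\cdot|\wedge^2 A|/|\Aut(A)|$ is genuinely summable, and that a single majorant can be chosen that works uniformly in $n$, is the delicate combinatorial heart of the argument; once it is in place, term-by-term convergence finishes the proof.
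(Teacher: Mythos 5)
Your overall strategy (reduce to pointwise convergence of $\Prob(X\isom B)$, then recover these via an inversion formula with controlled tails) is the right shape, and the reduction via the topological remark is correct. But there are several concrete problems with the middle step.

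First, the claim that the inversion coefficients $c_{A,B}$ admit a polynomial bound is incorrect. The inversion involves the M\"obius function of the subgroup lattice of finite abelian $P$-groups, and for $A\isom\F_p^k$ this already contributes factors of size roughly $p^{\binom{k}{2}}\approx|\wedge^2 A|$, which is super-polynomial in $|A|$. What saves the argument is not that $c_{A,B}$ is small but that $|\Aut(A)|\approx p^{k^2}$ is much larger, so that $|c_{A,B}|\cdot M_A/|\Aut(A)| = O\bigl(|\wedge^2 A|^2/|\Aut(A)|\bigr)=O(p^{-k})$ is summable over partitions. Getting this estimate right is precisely the content of the key lemma in \cite[Lemma 8.2]{Wood2017}, and a polynomial bound on $c_{A,B}$ would not suffice or even be true. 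Relatedly, the ``iterated inclusion--exclusion on the associated finite posets'' is not literally available: restricting to a fixed exponent does \emph{not} give a finite poset (groups of bounded exponent have unbounded rank), so the construction of the $c_{A,B}$ requires more care than you allow.

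Second, your tail estimate is not actually uniform in $n$. The hypothesis is pointwise convergence $\E(\#\Sur(X_n,A))\to M_A$ for each fixed $A$; this does not give a bound $\E(\#\Sur(X_n,A))\leq C|\wedge^2 A|$ with a constant $C$ uniform in both $n$ and $A$. Absorbing it into ``a slightly larger multiplicative constant'' silently asserts such uniformity. The standard way around this is to pass to a subsequence (diagonalizing over the countably many $B\in\mathcal{A}$), establish tightness and uniform integrability of $\#\Sur(X_n,A)$ for each fixed $A$ (which one can do via the identity $\#\Hom(X,A)^2=\#\Hom(X,A\times A)=\sum_{A'\leq A\times A}\#\Sur(X,A')$ together with boundedness of the individual moment sequences), and then invoke the \emph{uniqueness} statement for the subsequential limit; this is roughly how \cite[proof of Cor.\ 9.2]{Wood2017} bootstraps robustness from uniqueness. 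The approach you describe, following \cite[Lemma 18]{Heath-Brown1994} or the proof of Theorem~\ref{T:only2} in this paper, instead truncates to a finite linear system for each $N$ and bounds the truncation error; that works too, but the truncation error for $X_n$ must be bounded using only finitely many moments at a time, not a global bound over all $A$.

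Finally, your remark that $|\wedge^2 A|$ is essentially sharp and that tightness would fail for faster-growing moments is wrong on both counts. Tightness never fails as long as the moments are bounded for each fixed $A$ (this is just Markov's inequality applied to $\#\Sur(X,A)$). What eventually fails is \emph{uniqueness}, and the known threshold for that is larger by a factor of $|A|$: the Poonen--Rains distributions $X_{odd},X_{even}$ have $\E(\#\Sur(\cdot,\F_p^k))=p^{(k^2+k)/2}=|\wedge^2\F_p^k|\cdot|\F_p^k|$, and that is where distinct distributions with equal moments first appear (cf.\ equation~\eqref{E:PR} and Theorem~\ref{T:PRmom}). So there is genuine room between the $O(|\wedge^2 A|)$ hypothesis of Theorem~\ref{T:Wmom} and the true failure threshold, and this gap matters for understanding which applications need parity or other extra data.
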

When $A=\F_p^k$, we have $|\wedge^2 A|=p^{k(k-1)/2}$, so we see a similar upper bound to that of Fouvry and Kl\"uners.
Theorem~\ref{T:Wmom} was applied in \cite{Wood2017} to determine the limiting distribution of the Jacobians (a.k.a. sandpile groups) of Erd\H{o}s--R\'{e}nyi random graphs, and by 
M\'esz\'aros \cite{Meszaros2020} to determine the limiting distribution of the Jacobians of random regular graphs.
M\'esz\'aros's result then had the striking corollary that the adjacency
matrix of a random regular graph is invertible with high probability, answering a long-standing open question that is not a priori about random groups at all.

If we consider a random finite abelian group $X$, without any condition on primes dividing its order, we have a uniqueness result by W. Wang and the author
as a corollary of Theorem~\ref{T:Wmom}.
\begin{corollary}[{\cite[Theorem 6.13]{Wang2021}}]
Let $M_A\in \R$ for each finite abelian group $A$ such that $M_A=O(|\wedge^2 A|)$.
Let $X,Y$ be random finite abelian groups,
If for every finite abelian group $A$, we have
$$
 \E(\#\Sur(X,A))= \E(\#\Sur(Y,A))=M_A,
$$
then $X$ and $Y$ have the same distribution, i.e. for every finite abelian group $B$,
$$
\Prob(X\isom B)=\Prob(Y\isom B).
$$
\end{corollary}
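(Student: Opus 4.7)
The plan is to deduce the corollary from Theorem~\ref{T:Wmom} by localizing at finite sets of primes and then passing to a measure-theoretic limit. For a finite set of primes $P$ and a finite abelian group $G$, let $G_P$ denote the $P$-part of $G$ (the direct sum of the Sylow $p$-subgroups for $p\in P$), so that $G=G_P\oplus G_{P^c}$ is the unique such decomposition. The key observation is that if $A$ is a finite abelian $P$-group, then any homomorphism $G\to A$ must annihilate $G_{P^c}$ and therefore factors uniquely through $G_P$; hence $\#\Sur(G,A)=\#\Sur(G_P,A)$. Taking expectations, $\E(\#\Sur(X_P,A))=\E(\#\Sur(Y_P,A))=M_A$ for every finite abelian $P$-group $A$, and the bound $M_A=O(|\wt A|)$ holds by hypothesis.

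Next, for each finite set of primes $P$, I would apply Theorem~\ref{T:Wmom} to the constant sequence $X_n:=X_P$ with target $Y_P$: the hypothesis is satisfied by the previous paragraph, so the constant sequence weakly converges in distribution to $Y_P$, which means $\Prob(X_P\isom B)=\Prob(Y_P\isom B)$ for every finite abelian $P$-group $B$.

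Finally, I would promote this to equality of distributions on all finite abelian groups. Fix a finite abelian group $B$ and let $P_0$ be the (finite) set of primes dividing $|B|$. Choose an increasing chain $P_0\subseteq P_1\subseteq P_2\subseteq\cdots$ of finite sets of primes whose union is the set of all primes. Since $B$ is a $P_i$-group for every $i$, the events $E_i:=\{X_{P_i}\isom B\}$ form a decreasing sequence (if $X_{P_{i+1}}\isom B$ then $X_{P_i}=(X_{P_{i+1}})_{P_i}\isom B_{P_i}=B$), and their intersection is exactly $\{X\isom B\}$, because as soon as $P_i$ contains every prime dividing $|X|$ we have $X_{P_i}=X$. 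By continuity of probability measures, $\Prob(X\isom B)=\lim_{i\to\infty}\Prob(E_i)$, and the analogous identity holds with $Y$ in place of $X$. Combined with $\Prob(X_{P_i}\isom B)=\Prob(Y_{P_i}\isom B)$ from the previous paragraph, this yields $\Prob(X\isom B)=\Prob(Y\isom B)$, as required.

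The main (mild) obstacle is the measure-theoretic limit in the third step: one must check that the nested events really shrink to $\{X\isom B\}$, which reduces to the unique $P$/$P^c$ decomposition. Beyond that, the proof is essentially bookkeeping on top of the already-available Theorem~\ref{T:Wmom} and countable additivity on the countable set of finite abelian groups.
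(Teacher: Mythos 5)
Your proof is correct and takes essentially the same route as the paper: localize to the $P$-part, observe that the $\Sur$-counts to a $P$-group $A$ only see $X_P$, apply Theorem~\ref{T:Wmom} to get $X_P$ and $Y_P$ identically distributed for each finite $P$, then pass to the limit along an exhaustion of all primes. The paper compresses the last step into the single line $\Prob(X\isom A)=\lim_{z\to\infty}\Prob(\prod_{p\le z}X_p\isom\prod_{p\le z}A_p)$; your version, fixing $B$ and anchoring the chain at $P_0\supseteq\{p:p\mid|B|\}$ so that $B_{P_i}=B$, and spelling out the decreasing-events/continuity argument, is the same idea with the bookkeeping made explicit.
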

\begin{proof}
For a finite abelian group $C$, let $C_p$ denote its Sylow $p$-subgroup.
We have
$$
\Prob (X\isom A)=\lim_{z\ra\infty} \Prob (\prod_{p\leq z} X_p\isom \prod_{p\leq z} A_p).
$$
Then we can apply Theorem~\ref{T:Wmom} with $P$ the set of primes at most $z$ to conclude the corollary.
\end{proof}

However, for general finite abelian groups, robustness no longer holds (as it is possible the limit in $n$ cannot be exchanged with the limit in $z$).  As in \cite[Example 6.14]{Wang2021},
we can consider a random finite abelian group $X$, e.g. such that
$$
\Prob(X\isom A)=\frac{\zeta(2)^{-1}\zeta(3)^{-1}\zeta({4})^{-1}\cdots}{|A||\Aut A|},
$$
where $\zeta$ is the Riemann zeta function and we can also write $\zeta(2)^{-1}\zeta(3)^{-1}\zeta({4})^{-1}\cdots$ as a product over primes $\prod_{p} \prod_{i\geq 2}(1-p^{-i})$.  
(There is a random group with this distribution--see e.g. \cite[Proposition 2.1]{Wood2018}, and it is the limiting distribution predicted by Gerth's extension \cite{GerthIII1987a} of the Cohen-Lenstra heuristics for $2\bCl_K$, where $K$ is a random real quadratic field.) Then consider the random groups $X \times \Z/p\Z$ for each prime $p$.
For any finite abelian group $A$, we have $\lim_{p\ra\infty} \E(\#\Sur(X \times \Z/p\Z,A))=\E(\#\Sur(X,A))$ since for $p$ large enough $p\nmid |A|$.  Yet the limiting distribution of the 
$X \times \Z/p\Z$ is the zero distribution, i.e. for each $A$ we have $\lim_{p\ra\infty} \Prob(X \times \Z/p\Z\isom A)=0$.
This is in stark contrast to the  situation for random real numbers \cite[Theorem 30.2]{Billingsley1986}, where whenever the moments determine a unique distribution, they do so robustly.

\subsection{When uniqueness fails}
Another important example of distributions arising in number theory are those predicted by Poonen and Rains \cite{Poonen2012} as the 
asymptotic
distributions of $p$-Selmer groups of random elliptic curves.  
We consider two different random $\F_p$ vector spaces, with distributions given as follows
\begin{align}
\Prob(X_{odd}\isom \F_p^k) = \begin{cases}
 p^{-(k^2-k)/2}\frac{\prod_{j=0}^\infty (1-p^{-2j-1})}{\prod_{j=1}^k (1-p^{-j})}   & \textrm{$k$ odd}\\
0 & \textrm{$k$ even}
\end{cases}\label{E:PR}\\
\Prob(X_{even}\isom \F_p^k) = \begin{cases}
 p^{-(k^2-k)/2}\frac{\prod_{j=0}^\infty (1-p^{-2j-1})}{\prod_{j=1}^k (1-p^{-j})}   & \textrm{$k$ even}\\
0 & \textrm{$k$ odd}\notag
\end{cases}.
\end{align}
Poonen and Rains \cite{Poonen2012} conjecture that these are the limiting distributions of $p$-Selmer group of elliptic curves over $\Q$ of odd and even parity, respectively, and note \cite[Proposition 2.22(c)]{Poonen2012} that these distributions have the same moments, even though they are quite different distributions, supported on entirely disjoint sets of groups.
Indeed, there moments are as follows, and  we see that these cases are just beyond the bounds of the uniqueness results mentioned above.
\begin{theorem}\label{T:PRmom}
For each $k\geq 0$, we have
\begin{align*}
\E( \#\Sur(X_{odd},\F_p^k))&=\E( \#\Sur(X_{even},\F_p^k))=& &p^{(k^2+k)/2}, \textrm{ and}\\
\E(\#\Hom(X_{odd},\F_p^k))&=\E(\#\Hom(X_{even},\F_p^k))=& &p^{(k^2+k)/2}\prod_{j=1}^{k}(1+p^{-j}).
\end{align*}
\end{theorem}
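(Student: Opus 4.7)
The plan is to reduce everything to classical $q$-series identities in $q = p^{-1}$, most importantly Euler's identity
\[
\sum_{n \geq 0} \frac{q^{n(n-1)/2} z^n}{\prod_{j=1}^n (1-q^j)} = \prod_{j \geq 0}(1+zq^j),
\]
evaluated at $z = \pm 1$.

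First I would verify the Sur-moment formula by direct computation. Using $\#\Sur(\F_p^r, \F_p^k) = \prod_{i=0}^{k-1}(p^r - p^i)$, which vanishes for $r < k$, together with the telescoping simplification
\[
\frac{\prod_{i=0}^{k-1}(p^r - p^i)}{\prod_{j=1}^r (1-p^{-j})} = \frac{p^{rk}}{\prod_{j=1}^{r-k}(1-p^{-j})},
\]
followed by the change of variable $s = r - k$ (which converts the quadratic exponent $-(r^2-r)/2 + rk$ into $(k^2+k)/2 - s(s-1)/2$), the Sur-moment for either $X_{odd}$ or $X_{even}$ collapses to
\[
p^{(k^2+k)/2} \cdot \prod_{j \geq 0}(1-p^{-2j-1}) \cdot \sum_{\substack{s \geq 0 \\ s \text{ of prescribed parity}}} \frac{q^{s(s-1)/2}}{\prod_{j=1}^s(1-q^j)}.
\]
Euler's identity at $z = 1$, combined with the standard factorization $\prod_{j \geq 1}(1+q^j) = 1/\prod_{j\geq 0}(1-q^{2j+1})$, evaluates the full unrestricted sum, while at $z = -1$ the right hand side is $0$ because its $j=0$ factor vanishes. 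Taking half-sum and half-difference then shows that both the even-$s$ and odd-$s$ partial sums equal $1/\prod_{j \geq 0}(1-q^{2j+1})$, so the outer infinite product cancels and the Sur-moment equals $p^{(k^2+k)/2}$ in all four parity combinations of $(k, r\bmod 2)$.

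For the Hom-moment I would then apply the standard identity $\#\Hom(G,\F_p^k) = \sum_{d=0}^k \binom{k}{d}_p \#\Sur(G,\F_p^d)$, where $\binom{k}{d}_p$ is the Gaussian binomial counting $d$-dimensional subspaces of $\F_p^k$, to obtain $\E(\#\Hom(X, \F_p^k)) = \sum_{d=0}^k \binom{k}{d}_p p^{d(d+1)/2}$, and recognize the right hand side by the finite $q$-binomial theorem (with base $q = p$ and parameter $z = p$) as $\prod_{j=1}^k(1+p^j)$. Factoring $p^j$ out of each term rewrites this as $p^{(k^2+k)/2}\prod_{j=1}^k(1+p^{-j})$, as required. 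The only step with any genuine content is the parity symmetry of the partial sums of Euler's series; everything else is a routine manipulation of $q$-products. This parity symmetry is in fact precisely what explains the coincidence highlighted by Poonen-Rains, that $X_{odd}$ and $X_{even}$ have identical moments even though they are supported on completely disjoint sets of groups.
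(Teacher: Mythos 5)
Your argument is correct, but it takes a genuinely different route from the paper. The paper's proof sketch cites Poonen--Rains for the Hom-moments and, for the Sur-moments, realizes $X_{odd}$ and $X_{even}$ as limits of cokernels of uniform random $n\times n$ alternating matrices over $\F_p$ (with $n$ odd or even), computes the limiting moments of those random cokernels via an explicit corank count and dominated convergence, and transfers the answer back. This matches a recurring theme of the paper --- that it is useful to recognize a given distribution as the limit of natural random matrix models --- but it is a fairly indirect path to the specific numbers being claimed. Your proof instead works purely with $q$-series in $q=p^{-1}$: the change of variable $s=r-k$ turns the Sur-moment sum into a parity-restricted piece of Euler's series $\sum q^{\binom{s}{2}}z^s/\prod_{j=1}^s(1-q^j)$, and the $z=\pm 1$ evaluations of $\prod_{j\geq 0}(1+zq^j)$ show the even-$s$ and odd-$s$ partial sums coincide (precisely because the $z=-1$ evaluation vanishes), which both kills the outer infinite product and simultaneously explains why $X_{odd}$ and $X_{even}$ share all moments despite being supported on disjoint sets. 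The Hom-moments then follow from the finite $q$-binomial theorem applied to $\sum_d \binom{k}{d}_p\,\#\Sur(-,\F_p^d)$, rather than being imported from Poonen--Rains. I verified the telescoping identity $\prod_{i=0}^{k-1}(p^r-p^i)/\prod_{j=1}^r(1-p^{-j})=p^{rk}/\prod_{j=1}^{r-k}(1-p^{-j})$, the exponent bookkeeping $-(r^2-r)/2+rk=(k^2+k)/2-s(s-1)/2$, and the cancellation against $\prod_{j\geq 0}(1-p^{-2j-1})$ via $\prod_{j\geq 1}(1+q^j)=1/\prod_{j\geq 0}(1-q^{2j+1})$; all are correct. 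Your proof is more elementary and self-contained, trading the conceptual random-matrix framing for a direct, conceptually transparent reason for the moment coincidence, and it arguably gives cleaner access to the Sur-moments, which the paper only obtains secondhand.
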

\begin{proof}[Proof Sketch]
The Hom-moments  are shown in   \cite[Proposition 2.22(c)]{Poonen2012}.  
The Sur-moments can be found, in principle, by applying M\"obius inversion  to the Hom-moments.
However, the following argument is perhaps more practical.
The distributions of $X_{odd}$ and $X_{even}$ occur as the limiting distribution of cokernels of uniform random $n \times n$ alternating matrices over $\F_p$ (where $n$ is odd or even, respectively).  It is a general feature that for various computations it can be helpful, even for a known distribution, to recognize it as the limit of natural distributions.
We can see the claimed limit by counting exactly  how many alternating matrices over $\F_p$ have corank $k$ for each $k$ as in \cite[Proposition 3.8]{Lewis2011}
(see also \cite[Theorem 1.10]{Bhargava2015b}).    Then, one can make a simple argument to compute the limiting moments of these random cokernels as in \cite[Theorem 11]{Clancy2015} (which does the analogous thing for symmetric matrices), and use the explicit formulas for the distribution of the random cokernels for each $k$ and $n$ along with the dominated convergence theorem, as
in \cite[Theorem 10]{Clancy2015}, to deduce that the limiting moments of the random cokernels agree with the moments of $X_{odd}$ and $X_{even}$.
\end{proof}

However, in a setting as we have described, we could also use the additional information that we are looking for a distribution supported only on groups of even rank (or odd rank), along with the moments, to determine a distribution.  

One important motivation for the conjectures of Poonen and Rains was the result of
Heath-Brown \cite{Heath-Brown1994} determining the limiting distribution of $2$-Selmer groups of a random quadratic twist of the congruent number curve.  Heath-Brown showed that the limiting distribution for the quotient of the $2$-Selmer group by the $\F_2^2$ coming from the $2$-torsion points on the curve is the $X_{odd}$ distribution for twists $D\equiv 5,7 \pmod{8}$ (when the Selmer rank is odd), and the $X_{even}$ distribution for twists $D\equiv 1,3 \pmod{8}$  (when the Selmer rank is even).
Heath-Brown determined these distributions by first determining the moments and then proving a robust uniqueness result for the moment problem. Heath-Brown pointed out that it was surprising that these different distributions had the same moment, and proved the following robust uniqueness result, taking into account the parity.
\begin{theorem}[{{\cite[Lemma 18, proof of Theorem 2]{Heath-Brown1994}}}]\label{T:HB}
Let $M_0,M_2,\dots$ be non-negative real numbers such that $M_k=O(2^{k(k+1)/2})$. 
Let $Y,X_1,X_2,\dots$ be random even dimensional $\F_2$-vector spaces.
Then if for every even $k\geq 0$, we have
$$
\lim_{n\ra\infty} \E(\#\Hom(X_n,\F_2^{k}))= \E(\#\Hom(Y,\F_2^{k}))=M_k,
$$
then the $X_n$ weakly converge in distribution to $Y$, i.e. for every even $r$ we have,
$$
\lim_{n\ra\infty} \Prob(X_n\isom \F_2^{r})=\Prob(Y\isom \F_2^{r}).
$$
The statement also holds if we replace ``even'' with ``odd.''
\end{theorem}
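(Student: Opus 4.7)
The plan is to prove, for each fixed even $r_0 \geq 0$, that $\lim_n \Prob(X_n \isom \F_2^{r_0}) = \Prob(Y \isom \F_2^{r_0})$ by explicitly approximating the indicator of $\{X \isom \F_2^{r_0}\}$ by a finite $\R$-linear combination of the functions $X \mapsto \#\Hom(X, \F_2^{2j})$, which are exactly the even Hom-moments at our disposal. The odd-dimensional case is symmetric.

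Fix $r_0$ and $N > r_0/2$, and set
$$P_N(u) = \prod_{\substack{s = 0 \\ s \neq r_0/2}}^{N-1} \frac{u - 16^s}{16^{r_0/2} - 16^s},$$
the degree-$(N-1)$ Lagrange polynomial with $P_N(16^s) = \delta_{s,r_0/2}$ for $s = 0, 1, \ldots, N-1$. Since $|\F_2^r|^2 = 16^{r/2}$ for even $r$, the function $f_N(X) := P_N(|X|^2) = \sum_{j=0}^{N-1} c_{j,N} \#\Hom(X, \F_2^{2j})$ is a finite linear combination of even Hom-counts satisfying $f_N(\F_2^r) = \delta_{r, r_0}$ when $r$ is even and $r < 2N$. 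Writing
$$\E(f_N(X)) = \Prob(X \isom \F_2^{r_0}) + E_N(X), \qquad E_N(X) := \sum_{\substack{r \geq 2N \\ r \text{ even}}} \Prob(X \isom \F_2^r) \, P_N(16^{r/2}),$$
the hypothesis gives $\E(f_N(X_n)) \to \E(f_N(Y))$ for each fixed $N$, so the theorem reduces to a uniform estimate $E_N(X) \to 0$ as $N \to \infty$.

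The key estimate balances the growth of the Lagrange factors against the moment bound. A direct computation shows $|P_N(16^{r/2})| \leq 16^{(r/2)(N-1)}/D_{r_0, N}$ for $r \geq 2N$, where $D_{r_0, N} := \prod_{s < N, s \neq r_0/2} |16^{r_0/2} - 16^s| \asymp 2^{2N^2 - 2N}$ by expanding the product. Combining with $2^{r(2N-2)} \leq 2^{-4N} \cdot 2^{r \cdot 2N}$ for $r \geq 2N$ and the hypothesis $\sum_r \Prob(X \isom \F_2^r) 2^{r \cdot 2N} = M_{2N}^X = O(2^{2N^2 + N})$, one gets $|E_N(X)| = O(2^{-N})$, uniform in $X$ provided its $(2N)$-th Hom-moment obeys the hypothesis bound. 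This holds for $Y$ by assumption and, since $\E(\#\Hom(X_n, \F_2^{2N}))$ converges to $M_{2N}$ and is therefore bounded in $n$, it holds uniformly for all sufficiently large $n$.

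The main obstacle is that the moment bound $M_k = O(2^{k(k+1)/2})$ is sharp for this argument: the Poonen--Rains distributions $X_{odd}, X_{even}$ of Theorem~\ref{T:PRmom} already exhibit two distinct distributions sharing all Hom-moments of order $2^{(k^2+k)/2} \prod_{j=1}^k (1 + p^{-j})$, so dropping the even-dimensional restriction would allow a counter-example at essentially this growth rate; the even-support constraint is what lets the Lagrange interpolation close exactly at $\{16^s\}$ rather than at the denser set $\{4^k\}$, and that is the crucial structural input. Given the uniform estimate, for any $\epsilon > 0$ we choose $N$ so that $|E_N(Y)| + \sup_n |E_N(X_n)| < \epsilon$ and then $n$ so large that $|\E(f_N(X_n)) - \E(f_N(Y))| < \epsilon$, yielding $|\Prob(X_n \isom \F_2^{r_0}) - \Prob(Y \isom \F_2^{r_0})| < 3\epsilon$, which is weak convergence in distribution.
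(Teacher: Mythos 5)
Your proposal is correct, and it is essentially the same approach as the one the paper attributes to Heath-Brown and reprises in its proof of Theorem~\ref{T:only2}: truncate the (infinite) linear system relating the moments to the point masses, solve the truncated system by inverting a Vandermonde matrix at the nodes $\{16^s\}$ (your Lagrange polynomial $P_N$ is exactly that inverse, row by row), and show the tail from the truncation is $O(2^{-N})$ using the moment growth hypothesis together with the fact that consecutive nodes are separated by a factor of $16$ rather than $4$. The computation checks out: $D_{r_0,N}\asymp 2^{2N^2-2N}$ (constants depending on $r_0$, with the deficit from the lower-order factors $1-16^{r_0/2-s}$ bounded by a convergent infinite product), the numerator of $P_N(16^{r/2})$ is at most $16^{(r/2)(N-1)}$ for $r\geq 2N$, and then $2^{2r(N-1)}\leq 2^{-4N}2^{2rN}$ leads to $|E_N(X)|\leq c(r_0)\,2^{-2N^2-2N}\,\E(\#\Hom(X,\F_2^{2N}))$, giving $O(2^{-N})$ whenever that $2N$-th moment is $O(2^{2N^2+N})$. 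The only small imprecision is your final $\sup_n |E_N(X_n)|$: for a fixed $N$ this supremum could a priori be large for small $n$, and what the argument actually produces is $\limsup_{n}|E_N(X_n)|\leq c(r_0)C\,2^{-N}$ (since $\E(\#\Hom(X_n,\F_2^{2N}))\to M_{2N}$), which is all you need after the usual reordering of quantifiers. Your remark about why the even-dimensional restriction is exactly what rescues uniqueness against the Poonen--Rains moments is also accurate and matches the discussion around Theorems~\ref{T:PRmom} and \ref{T:only2}.
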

Feng, Landesman, and Rains \cite{Feng2020} face a similar issue (in a slightly different context, where the random groups have fixed finite support of a given parity, but they only know half the moments) and use knowledge of the parity along with moments to determine the distribution of $n$-Selmer groups of elliptic curves of fixed height over $\F_q(t)$ as $q\ra\infty$.

Given the two distributions of $X_{odd}$ and $X_{even}$ on $\F_p$-vector spaces given in \eqref{E:PR}, one natural question is what are all the distributions on $\F_p$-vector spaces with those same moments.  We will now show that these (plus their linear combinations) are the only such distributions.

\begin{theorem}\label{T:only2}
Given non-negative reals $M_{-1},M_0,M_1,\dots$, and $p>1$, and $b<3$, such that $M_k=O(p^{\frac{k^2+bk}{2}})$, there is at most one simultaneous solution $(x_s)_s$ to 
\begin{align*}
\sum_{s=0}^{\infty} (-1)^s x_s &=M_{-1} \quad\quad \textrm{and}\\
\sum_{s=0}^{\infty} x_sp^{sk}&=M_k  \quad\quad k=0,1,\dots
\end{align*}
such that $x_s\geq 0$ for all $s$.
\end{theorem}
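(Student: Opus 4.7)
The plan is to recast the uniqueness question as a statement about an entire function with many prescribed zeros. Suppose $(x_s)_s$ and $(y_s)_s$ are two non-negative solutions; set $z_s := x_s - y_s$. The hypotheses become
\[
\sum_{s\geq 0} (-1)^s z_s = 0 \qquad \text{and} \qquad \sum_{s\geq 0} z_s p^{sk} = 0 \quad (k=0,1,2,\ldots).
\]
Define the generating function $F(t) := \sum_{s\geq 0} z_s t^s$. The $k$th moment condition says $F(p^k)=0$ for every $k\geq 0$, and the parity condition says $F(-1)=0$. The goal is to use the growth bound $M_k = O(p^{(k^2+bk)/2})$ to control $F$ so tightly that this forced vanishing at so many points can only be achieved when $F\equiv 0$.

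First I would extract a Gaussian-in-$s$ decay estimate for the $x_s,y_s$, and hence for $z_s$. Since each $x_s\geq 0$, we have $x_s p^{sk} \leq M_k = O(p^{(k^2+bk)/2})$. Optimizing in $k$ near $k = s - b/2$ (and completing the square) yields $x_s = O(p^{-(s-b/2)^2/2})$, and similarly for $y_s$, hence
\[
|z_s| = O\bigl(p^{-(s-b/2)^2/2}\bigr).
\]
This super-exponential decay shows $F$ is entire. Estimating the sum by its largest term, attained at $s\approx b/2 + (\log R)/\log p$, gives
\[
\log \max_{|t|=R} |F(t)| \;\leq\; \frac{(\log R)^2}{2\log p} + \frac{b}{2}\log R + O(1).
\]

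The final step is Jensen's formula at radius $R = p^{N+1/2}$, chosen so no known zero sits on the circle. Assuming $F\not\equiv 0$, write $F(t) = t^m G(t)$ with $G(0)\neq 0$, $m\geq 0$. Counting only the zeros we already know ($0$ with multiplicity $m$, the points $p^0,\ldots,p^N$, and $-1$), Jensen gives
\[
\log|G(0)| + \log p\cdot\Bigl[m(N+\tfrac12) + \tfrac{(N+1)^2}{2} + (N+\tfrac12)\Bigr] \;\leq\; \log p\cdot\Bigl[\tfrac{(N+1/2)^2}{2} + \tfrac{b(N+1/2)}{2}\Bigr] + O(1).
\]
Expanding $(N+1)^2/2 - (N+1/2)^2/2 = N/2 + 3/8$ and collecting terms, the inequality reduces to
\[
\Bigl(m + \tfrac{3-b}{2}\Bigr) N \log p \;\leq\; O(1),
\]
which fails for large $N$ whenever $b<3$ (using $m\geq 0$). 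This contradiction forces $F\equiv 0$, i.e.\ $z_s=0$ for all $s$.

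The main obstacle is pinning down the exponent $(3-b)/2$ exactly; everything turns on matching the quadratic growth of $F$ (coming from the Gaussian tail of $z_s$) against the quadratic contribution of the moment zeros $p^k$. The threshold $b<3$ is essentially sharp: without the $(-1)^s$ constraint one would need $b<1$, which would fail for the Poonen--Rains moments in Theorem~\ref{T:PRmom} (where $b=1$), reflecting exactly the fact that $X_{odd}$ and $X_{even}$ and their convex combinations give distinct distributions with identical Sur- and Hom-moments. Adding the single extra zero at $-1$ to the Jensen budget raises the admissible growth by precisely one additional unit of $\log R$, which is what bumps the threshold up to $b<3$ and suffices for the theorem.
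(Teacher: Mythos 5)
Your proof is correct, and it takes a genuinely different route from the paper's. The paper's proof (following Heath-Brown) is linear-algebraic: truncate the infinite system at level $N$, observe that the resulting $N\times N$ matrix is a Vandermonde matrix in the nodes $-1, p^0, p^1, \ldots, p^{N-2}$, write down the first row of its inverse explicitly via the cofactor formula, bound the truncation error using the Gaussian decay $x_s = O(p^{(-s^2+bs)/2})$ (the same decay estimate you derive), and conclude that $x_0 = \lim_{N\to\infty}\sum_j (V^{-1})_{1,j} M_{j-2}$ is uniquely determined; then shift the system down and iterate to pin $x_1, x_2, \ldots$. Your proof instead packages the difference $z_s = x_s - y_s$ into an entire generating function $F$, uses the same Gaussian decay to bound $\log\max_{|t|=R}|F|$ by $\tfrac{(\log R)^2}{2\log p} + \tfrac{b}{2}\log R + O(1)$, and plays this off against the zero count at $R=p^{N+1/2}$ via Jensen's formula. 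The bookkeeping checks out: the zeros $p^0,\ldots,p^N$ contribute $\tfrac{(N+1)^2}{2}\log p$, the zero at $-1$ contributes $(N+\tfrac12)\log p$, and after subtracting the growth bound $\tfrac{(N+1/2)^2}{2}\log p + \tfrac{b(N+1/2)}{2}\log p$ one is left with $\bigl(m + \tfrac{3-b}{2}\bigr)N\log p \leq O(1)$, which fails for $b<3$ and large $N$.

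What each approach buys: the paper's gives an explicit (if unwieldy) formula for the determined $x_s$ as limits of finite sums, which the author then uses in the surrounding discussion of the existence and construction problems. Yours is shorter, more conceptual, and makes the threshold $b<3$ completely transparent as a zero-counting phenomenon; in particular your remark that the extra zero at $-1$ is worth exactly one extra unit of $\log R$ in the Jensen budget (raising the admissible $b$ from $<1$ to $<3$) cleanly explains both why the Poonen--Rains moments at $b=1$ do not determine a distribution without the parity constraint, and why they do with it. The one point to state more carefully in a polished version is that the optimization over $k$ giving $|z_s| = O(p^{-(s-b/2)^2/2})$ must use a nonnegative integer $k$; for $s$ bounded this just absorbs into the implied constant, and for large $s$ the nearest integer to $s-b/2$ loses at most a bounded amount, so the Gaussian estimate is uniform, but it is worth saying explicitly since the uniformity of the implied constant feeds directly into the growth bound on $F$.
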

We note that this proof strategy is in the style of the earliest work on this problem, and not the more recent work, but it will also let us see some of the main features of the moment problem.
\begin{proof}
We modify the method from \cite[Lemma 18]{Heath-Brown1994}.
First, assuming we have a non-negative solution, we can bound $x_s$ using the $k=s$ equation to obtain
$$
x_s=O(p^{\frac{-s^2+bs}{2}}). 
$$
From this it follows that for any $N\geq 0$ and $k\leq N-2$,
$$
\sum_{s\geq N} x_sp^{sk} =O(\sum_{s\geq N} p^{\frac{-s^2+bs+2ks}{2}})=O(p^{\frac{-N^2+bN+2kN}{2}} ),
$$
where we allow the constant in the $O$ to depend on $p$.

We take some positive integer $N$, and we truncate the system to write
$$
\sum_{s=0}^{N-1} x_sp^{sk}=M'_k 
$$
for $k=-1,0,1,\dots N-2$ (except for $k=-1$ we replace $p^{sk}$ with $(-1)^s$).  Let $V$ be the $N\times N$ matrix whose $i,j$ coefficient is $p^{(i-2)(j-1)}$ for $i\geq 2$ and $(-1)^{j-1}$ for $i=1$.  Let $x$ be the vector with entries $x_0,\dots,x_{N-1}$ and $M'$ the vector with entries $M'_{-1},\dots, M'_{N-2}$.  Then $Vx=M'$.  (All of these implicitly depend on $N$.)  We will just give the first row of $V^{-1}$ explicitly.  Since $V$ is Vandermonde, we have $\det V=\prod_{0\leq i <j \leq N-2} (p^{j}-p^i) \prod_{i=0}^{N-2} (p^i+1)$.  Note that the $i,1$ minor of $V$ is also 
Vandermonde (after dividing out a factor from each row)
on the same elements, except for $p^{i-2}$, (or $-1$ when $i=1$). So we have
\begin{align}\label{E:invcoeff}
(V^{-1})_{1,j}=\frac{\pm p^{\frac{(N-2)(N-1)}{2}-(j-2)}}{(p^{j-2}+1)\prod_{\substack{0\leq i \leq N-2 \\ i\ne j-2}} (p^{j-2}-p^i)  }
\end{align}
for $j>1$, and 
$$
(V^{-1})_{1,1}=\frac{\pm p^{\frac{(N-2)(N-1)}{2}}}{ \prod_{i=0}^{N-2} (p^i+1)}
$$
and in all cases
$$
(V^{-1})_{1,j}=O(p^{\frac{-j^2+j}{2}}).
$$
So
\begin{align*}
x_0=&\sum_{j=1}^{N} (V^{-1})_{1,j} M'_{j-2}\\
=&\sum_{j=1}^{N} (V^{-1})_{1,j} M_{j-2}  +O(\sum_{j=1}^{N} p^{\frac{-j^2+j}{2}} |M_{j-2}-M'_{j-2}|  )  \\
=&\sum_{j=1}^{N} (V^{-1})_{1,j} M_{j-2}  +O( p^{\frac{(b-3)N}{2}}  ) .
\end{align*}
So that means $x_0$ must be $\lim_{N\ra \infty} \sum_{j=1}^{N} (V^{-1})_{1,j} M_{j-2} $ (where the matrix $V$ implicitly depends on $N$).  

Once $x_0$ is determined, we notice that our equations imply
\begin{align*}
\sum_{s=1}^{\infty} (-1)^{s-1} x_s =-(M_{-1}-x_0) \textrm{ and}\\
\sum_{s=1}^{\infty} x_sp^{(s-1)k}=(M_k-x_0)p^{-k}, 
\end{align*}
and we have a new system whose constants are still $O(p^{\frac{k^2+bk}{2}})$, and thus we can apply to same reasoning to deduce $x_1,\dots,$ each have at most 1 possible value.
\end{proof}

\begin{corollary}
If $\mu_{odd},\mu_{even}$ are the distributions of $X_{odd},X_{even}$, then any random $\F_p$-vector space $X$ such that for all $k$,
$$
\E(\#\Hom(X,\F_p^k))=p^{(k^2+k)/2}\prod_{j=1}^{k}(1+p^{-j})
$$
has distribution $\lambda \mu_{odd}+(1-\lambda) \mu_{even}$ for some $0\leq \lambda \leq 1$.
\end{corollary}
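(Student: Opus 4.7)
The plan is to apply Theorem~\ref{T:only2} with $x_s=\Prob(X\isom \F_p^s)$, supplementing the Hom-moment hypothesis with a single ``parity'' equation that selects the correct convex combination $\lambda\mu_{odd}+(1-\lambda)\mu_{even}$. Since $\mu_{odd}$ and $\mu_{even}$ have identical Hom-moments but are supported on disjoint sets of dimensions, the additional datum needed to pin down a mixture is exactly the alternating sum $\sum_s (-1)^s x_s$, which is the $k=-1$ moment appearing in Theorem~\ref{T:only2}.

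First I would translate the hypothesis into the system of Theorem~\ref{T:only2}: since $\#\Hom(\F_p^s,\F_p^k)=p^{sk}$, we have $\sum_{s\geq 0} x_s p^{sk}=M_k$ with $M_k=p^{(k^2+k)/2}\prod_{j=1}^{k}(1+p^{-j})$. Because $\prod_{j\geq 1}(1+p^{-j})<\infty$, $M_k=O(p^{(k^2+k)/2})$, so the growth hypothesis of Theorem~\ref{T:only2} holds with $b=1<3$. Next, set $M_{-1}:=\sum_s (-1)^s x_s$; this converges absolutely and lies in $[-1,1]$, since $x_s\geq 0$ and $\sum_s x_s=1$ (the latter is the $k=0$ Hom-moment). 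Define $\lambda:=(1-M_{-1})/2\in[0,1]$, and let $(x'_s)$ be the probability sequence of $\nu:=\lambda\mu_{odd}+(1-\lambda)\mu_{even}$. By Theorem~\ref{T:PRmom} and linearity, $\sum_s x'_s p^{sk}=M_k$ for all $k\geq 0$, and by the formula for $X_{odd},X_{even}$ a direct calculation gives $\sum_s (-1)^s x'_s=-\lambda+(1-\lambda)=M_{-1}$. Thus $(x_s)$ and $(x'_s)$ are both non-negative solutions of the same system, and the uniqueness in Theorem~\ref{T:only2} forces $x_s=x'_s$ for every $s$, i.e.\ the distribution of $X$ equals $\nu$.

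The main technical snag is that Theorem~\ref{T:only2} is stated for non-negative $M_{-1}$, whereas our parity sum can be negative (it tends to $-1$ when $X$ is weighted toward odd dimensions). However, an inspection of the proof shows that non-negativity is only used on the unknowns $x_s$, through the bound $x_s\leq p^{-s^2}M_s$ coming from the $k=s$ equation; no sign hypothesis on $M_{-1}$ is ever invoked. The Vandermonde inversion and tail-truncation that recover each $x_i$ from the sequence $(M_{-1},M_0,M_1,\dots)$ go through unchanged, so the uniqueness extends verbatim and completes the proof.
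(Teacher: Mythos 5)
Your proof is correct and follows essentially the same route as the paper: set $x_s=\Prob(X\isom\F_p^s)$, let $\lambda$ be the probability of odd rank, and apply Theorem~\ref{T:only2} with $M_{-1}=1-2\lambda$ to conclude uniqueness. You correctly flag (and resolve) a small gap the paper glosses over: Theorem~\ref{T:only2} is stated for non-negative $M_{-1}$, yet the paper applies it with $M_{-1}=1-2\lambda$, which is negative when $\lambda>1/2$; as you observe, the only non-negativity actually used in that proof is of the unknowns $x_s$ (to get the bound $x_s\leq p^{-s^2}M_s$), so the hypothesis on $M_{-1}$ can be dropped and the application is legitimate.
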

\begin{proof}
Clearly $\lambda \mu_{odd}+(1-\lambda) \mu_{even}$ give distributions with these same moments, and they each assign a different probability to the group being odd rank.  
Let $\lambda$ be the probability that $X$ has odd rank.  
We apply Theorem~\ref{T:only2} with $x_s=\Prob(X\isom \F_p^s)$, and $M_{-1}=1-2\lambda$, and $M_k=p^{(k^2+k)/2}\prod_{j=1}^{k}(1+p^{-j})$, and find that there
are unique values $x_s$ satisfying the equations, which proves the corollary.
\end{proof}

\begin{oproblem}
Besides the parity of the rank, are there other natural moments that we can consider for random finite $\F_p$-vector spaces, or finite abelian groups more generally, so that with the additional moments we can strengthen uniqueness results to allow for larger growing moments?
\end{oproblem}

In forthcoming work of Nguyen and the author, we prove  a generalization of the robust uniqueness result of Theorem~\ref{T:HB} for random finite abelian groups whose orders are supported on a finite set of primes, with a parity condition on the group.  
\begin{theorem}[Nguyen-W., forthcoming]\label{T:Hoimom}
Let $P$ be a finite set of primes, and let $\mathcal{A}$ be the set of finite abelian $P$-groups.  
Let $M_A\in \R$ for each $A\in \mathcal{A}$ such that $M_A=O(|\Sym^2 A|)$.
Let $a$ be an integer and $Y,X_1,X_2,\dots$ be random groups in $\mathcal{A}$, either 
\begin{enumerate}
\item all supported on groups of the form $G\times G$, or
\item all supported on groups of the form $\Z/a\Z \times G \times G$, for $G$ with $aG=0$.
\end{enumerate}
If for every $A\in \mathcal{A}$, we have
$$
\lim_{n\ra\infty} \E(\#\Sur(X_n,A))= \E(\#\Sur(Y,G))=M_A,
$$
then the $X_n$ weakly converge in distribution to $Y$, i.e. for every $B\in \mathcal{A}$,
$$
\lim_{n\ra\infty} \Prob(X_n\isom B)=\Prob(Y\isom B).
$$
\end{theorem}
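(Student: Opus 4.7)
The plan is to adapt the inversion approach of \cite{Wood2017} used in the proof of Theorem~\ref{T:Wmom}, strengthening the quantitative bounds by exploiting the symmetric structural restriction, much as Heath-Brown exploits the parity restriction in the proof of Theorem~\ref{T:HB}. The goal is to write, for each $B\in\mathcal{A}$ satisfying the support condition,
$$
\Prob(X\isom B) = \sum_{A\in\mathcal{A}} c(B,A)\, \E(\#\Sur(X,A))
$$
as an absolutely convergent linear combination whose coefficients decay fast enough that the moment bound $M_A=O(|\Sym^2 A|)$ yields term-by-term passage to the limit in $n$.

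First I would construct coefficients $c(B,A)$ by triangular M\"obius-type inversion on the poset of finite abelian $P$-groups restricted to the subset $\mathcal{A}_\sym\subset\mathcal{A}$ of groups satisfying the support condition (form $G\times G$ in case~(1) or $\Z/a\Z\times G\times G$ in case~(2)). Existence is formal; the crux is the bound
$$
|c(B,A)| = O_B\bigl(|\Sym^2 A|^{-1}\bigr).
$$
In the proof of Theorem~\ref{T:Wmom} the analogous coefficients satisfy only $O_B(|\wedge^2 A|^{-1})$, and the improvement by a factor of $|\Sym^2 A|/|\wedge^2 A|$ must come from the structural restriction itself: on $\mathcal{A}_\sym$ the functions $\#\Sur(-,A)$ become linearly dependent in a way they are not on all of $\mathcal{A}$, and this dependence effectively removes one Gaussian-binomial factor at each rank. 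I would track this by expressing $\#\Sur(G\times G, A)$ in terms of pairs of homomorphisms $G\to A$ and using the resulting generating identity, or alternatively by reading it off the cokernels-of-alternating-matrices model that appears in the proof of Theorem~\ref{T:PRmom}, where the $\Sym^2$ scaling is already explicit. Case~(2) would reduce to case~(1) by a finite Fourier decomposition over the characters of $\Z/a\Z$, each character summand being handled by case~(1) with uniform constants.

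Once the coefficient bound is in place, the hypothesis $M_A=O(|\Sym^2 A|)$ gives $|c(B,A) M_A|\leq C_B\,\phi(A)$ for a fixed summable function $\phi\colon\mathcal{A}\to\R_{\geq 0}$, and dominated convergence then exchanges $\lim_n$ with $\sum_A$ in the inversion. The main obstacle is the coefficient bound itself: getting the $\Sym^2$ rather than $\wedge^2$ denominator requires quantitative use of the structural restriction at every step of the inversion, not merely its qualitative presence. I expect this to follow from a symplectic-pairing analogue of the $q$-Vandermonde computation that appears in the proof of Theorem~\ref{T:only2}, with Pfaffian-style cancellations playing the role of the parity-based ones exploited by Heath-Brown.
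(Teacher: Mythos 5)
The paper states Theorem~\ref{T:Hoimom} as forthcoming work of Nguyen and the author and gives no proof of it, so there is no ``paper's proof'' to compare against; I can only assess your outline on its own terms. Your framing---triangular inversion against the $\#\Sur(-,A)$ basis in the style of the proof of Theorem~\ref{T:Wmom}, sharpened by the support restriction as Heath-Brown sharpens by parity in Theorem~\ref{T:HB}---is a reasonable picture, and the observation that $\#\Hom(G\times G,A)=\#\Hom(G,A\times A)$ makes the moment functions on the restricted class genuinely redundant is the right starting point. But the crux of the theorem is exactly the coefficient estimate $|c(B,A)|=O_B(|\Sym^2 A|^{-1})$, and you assert it rather than prove it; ``Pfaffian-style cancellations'' and a ``symplectic-pairing analogue of the $q$-Vandermonde computation'' are names for an argument, not the argument. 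Note how large the needed improvement is: under the relabeling $G\times G\leftrightarrow G$ (writing $X_n=\tilde{X}_n\times\tilde{X}_n$), your hypothesis gives $\E(\#\Hom(\tilde{X}_n,A\times A))=O(|\Sym^2 A|)$, which for $A=\F_p^k$ is $O(p^{k(k+1)/2})$, while an application of the inversion underlying Theorem~\ref{T:Wmom} at the group $\F_p^{2k}$ would permit moments as large as $p^{k(2k-1)}$ but would also demand moments at all groups, not only those of the form $A\times A$. Showing the inversion can be carried out using only the redundant moments, with coefficient decay matching the much smaller $|\Sym^2 A|$ scale, is the whole content of the theorem, and your proposal does not supply it.

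Separately, the reduction of case~(2) to case~(1) by ``finite Fourier decomposition over the characters of $\Z/a\Z$'' is not a valid step: the $\Z/a\Z$ in the support condition is not a group acting on your random variable, and there is no character decomposition of a measure on isomorphism classes of finite abelian groups that isolates a direct-summand $\Z/a\Z$. A reduction that does work is to multiply by $\Z/a\Z$: if $X$ is supported on groups $\Z/a\Z\times G\times G$ with $aG=0$, then $X\times\Z/a\Z$ is supported on groups $(\Z/a\Z\times G)\times(\Z/a\Z\times G)=H\times H$; the moments of $X\times\Z/a\Z$ are finite nonnegative combinations of the moments of $X$ still satisfying the $O(|\Sym^2 A|)$ bound; and $X\mapsto X\times\Z/a\Z$ is injective on isomorphism classes of finite abelian groups by cancellation, so case~(1) applied to $X_n\times\Z/a\Z$ and $Y\times\Z/a\Z$ yields the conclusion for $X_n$ and $Y$.
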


\subsection{Random finite abelian groups with additional structure}
The class groups of Galois fields are not just abelian groups, but are also $\Z[G]$-modules, where $G$ is the Galois group.  
Let $\Z[G]'=\Z[G,|G|^{-1}]$.
Given a number field $k$ and a finite group $G$, the Cohen-Lenstra-Martinet heuristics \cite{Cohen1984,Cohen1990} give a distribution on $\Z[G]'$-modules, and conjecture that 
a random $G$-extension of $k$ has class group who prime-to-$|G|$ part is according to their distribution.  Thus for potential number theoretic applications, one would like robust uniqueness for the moment problem for random finite $\Z[G]'$-modules.
W. Wang and the author have given such a robust uniqueness result (the stated results are only for particular moments that occur in the Cohen-Lenstra-Martinet heuristics, but the proof works without change for the result given here).
\begin{theorem}[see {\cite[Theorem 6.11]{Wang2021}}]\label{T:Gmoduniq}
Let $G$ be a finite group.
Let $P$ be a finite set of primes, none dividing $|G|$, and let $\mathcal{A}$ be the set of finite $P$-group $\Z[G]'$-modules.  
Let $M_A\in \R$ for each $A\in \mathcal{A}$ such that $M_A=O(|\wedge_{\Z[G]'}^2 A|)$.
Let $Y,X_1,X_2,\dots$ be random $\Z[G]'$-modules in $\mathcal{A}$.
If for every $A\in \mathcal{A}$, we have
$$
\lim_{n\ra\infty} \E(\#\Sur_G(X_n,A))= \E(\#\Sur_G(Y,A))=M_A,
$$
then the $X_n$ weakly converge in distribution to $Y$, i.e. for every $B\in \mathcal{A}$,
$$
\lim_{n\ra\infty} \Prob(X_n\isom B)=\Prob(Y\isom B).
$$
\end{theorem}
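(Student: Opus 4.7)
The strategy is to run the proof of Theorem~\ref{T:Wmom} in the $\Z[G]'$-module category. The key enabling observation is that since no prime in the finite set $P$ divides $|G|$, the ring $\Z_p[G]$ is semisimple for every $p\in P$: by Maschke's theorem it decomposes as a product of matrix algebras over finite unramified extensions of $\Z_p$. Consequently $\mathcal{A}$ factors, via Morita equivalence, as a product (over the simple $\F_p[G]$-modules for $p\in P$) of categories of finite torsion modules over discrete valuation rings. In particular, the combinatorics of $\Z[G]'$-submodules, quotients, and automorphism counts becomes $q$-binomial in each factor, in exact analogy with the abelian case but with $p$ replaced by the appropriate residue-field cardinality.

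First I would set up the Möbius inversion in this category. The triangular system
\begin{equation*}
\E(\#\Sur_G(X_n,A)) = \sum_{B\in\mathcal{A}} \Prob(X_n\isom B)\cdot \#\Sur_G(B,A)
\end{equation*}
(lower triangular with respect to $|B|$) can be inverted formally to produce coefficients $c_{B,A}$ depending only on $B$ and $A$, with
\begin{equation*}
\Prob(X_n\isom B) = \sum_{A\in\mathcal{A}} c_{B,A}\cdot \E(\#\Sur_G(X_n,A)).
\end{equation*}
Concretely, $c_{B,A}$ is an alternating sum over $\Z[G]'$-submodules of $B$, or dually over quotients. The formal infinite inversion is justified by first restricting to targets with $|A|\leq N$ and then letting $N\to\infty$ using the bound of the next step.

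Second, and this is the main technical point, one needs a bound on $|c_{B,A}|$ that, combined with $M_A = O(|\wedge^2_{\Z[G]'}A|)$, makes $\sum_A c_{B,A}\cdot \E(\#\Sur_G(X_n,A))$ absolutely convergent uniformly in $n$. In the $\Z$-module case of \cite{Wood2017} the analogous bound rests on Gaussian-binomial identities producing massive cancellation in the alternating Möbius sum. Over each Morita factor---a DVR with finite residue field---those identities hold verbatim with $p$ replaced by the residue cardinality, and combining them multiplicatively yields the desired bound with $|\wedge^2 A|$ replaced by $|\wedge^2_{\Z[G]'}A|$: the $\Z[G]'$-exterior square is exactly the invariant that aggregates the per-factor exterior-square contributions the $q$-binomial bookkeeping produces.

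With such a bound in hand, dominated convergence lets the limit in $n$ pass inside the sum, giving $\lim_{n\to\infty}\Prob(X_n\isom B) = \sum_A c_{B,A}\cdot\E(\#\Sur_G(Y,A)) = \Prob(Y\isom B)$, which is the required weak convergence in distribution. The principal obstacle is the combinatorial bound of Step 2: even in the abelian setting it is the heart of the argument, and here one must carefully run the $q$-binomial identities within each Morita factor and combine them so that the bound emerges in terms of $|\wedge^2_{\Z[G]'}A|$. Since the $\Z[G]'$-category has fewer submodules than the category of $\Z$-modules, the coefficients $c_{B,A}$ are generally larger than in the purely abelian case; this is precisely why the hypothesis is naturally formulated in terms of the (smaller) $\Z[G]'$-exterior square, and why Theorem~\ref{T:Wmom} does not apply directly---neither do the equivariant Sur-moments determine the abelian Sur-moments, nor does abelian-group convergence imply $\Z[G]'$-module convergence.
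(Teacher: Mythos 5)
The paper does not give its own proof of this theorem; it cites \cite[Theorem 6.11]{Wang2021} and simply remarks that the result is ``a version of Theorem~\ref{T:Wmom} in which $\Z[G]'$ is replaced by a maximal order in a semi-simple algebra.'' Your proposal---invoke Maschke/semisimplicity of $\Z_p[G]$ for $p\nmid|G|$, reduce via Morita to finite modules over DVRs, then run the moment-to-probability inversion and dominated-convergence argument of Theorem~\ref{T:Wmom}---matches that stated framing, so at the level of strategy you are on the same track as the source.

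A few details deserve caution, however. First, your explanation in the final paragraph of why $|\wedge^2_{\Z[G]'}A|$ is the right threshold runs backwards: passing from $\Z$-modules to $\Z[G]'$-modules makes \emph{both} the inversion coefficients (fewer submodules, fewer terms in the alternating sums) \emph{and} $|\wedge^2_{\Z[G]'}A|$ (a quotient of $|\wedge^2_\Z A|$) smaller, and the nontrivial content is that these two shrinkages balance at exactly this threshold---not that one grows while the other shrinks. Second, the claim that $\wedge^2_{\Z[G]'}$ ``aggregates the per-factor exterior-square contributions'' needs to be checked, not asserted: for $R$ non-commutative one must use the standard anti-involution $g\mapsto g^{-1}$ to even form $A\otimes_R A$, and the compatibility of this with Morita equivalence (passing from $M_n(\cO)$-modules to $\cO$-modules) is precisely where the $q$-binomial bookkeeping lives; this is the heart of the argument and cannot be waved through. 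Third, your framing via an absolutely convergent formal inversion $\Prob(X_n\isom B)=\sum_A c_{B,A}\E(\#\Sur_G(X_n,A))$ is plausible in the regime $M_A=O(|\wedge^2_{\Z[G]'}A|)$ (as the Vandermonde-type estimates in the paper's Theorem~\ref{T:only2} suggest for rank-one factors), but you should verify that absolute convergence really does hold at this threshold rather than requiring cancellation along a specific summation order, which is the subtler phenomenon the moment-problem literature repeatedly runs into when moments grow near $|\Sym^2 A|$. None of these is a fatal wrong turn, but they are exactly the places where a written proof would have to do real work.
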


Theorem~\ref{T:Gmoduniq} can be applied to work of Liu, Zureick-Brown, and the author \cite{Liu2019}, to prove, for every finite group $G$, a function field analog of the Cohen-Lenstra-Martinet heuristics for $G$-extensions over $\F_q(t)$, as $q\ra\infty$, as we will see below.  
Wang and the author \cite[Theorem 6.2]{Wang2021} have found the moments of the Cohen-Lenstra-Martinet distributions on $\Z[G]'$-modules.
In \cite{Liu2019}, we count and compare components of various Hurwitz schemes to estimate the moments of the class groups of random $G$-extensions of $\F_q(t)$, and notice those moments, in the limit where $q\ra\infty$ and then the degree $n$ of the (reduced) branch locus of the cover (i.e. the size of the radical of the discriminant) goes to infinity, match those predicted by Cohen-Lenstra-Martinet.  Theorem~\ref{T:Gmoduniq} then tells us that the limiting distribution of these class groups, when $q$ and $n$ both go to $\infty$, and $q$ is sufficiently large in terms of $n$, is as predicted by the Cohen-Lenstra-Martinet heuristics.  (Some caveats: these results are only in the case of extensions split completely over infinity, are only about the part of the class group that is prime to $|G|$, and $q$ must be taken so that $q-1$ is relatively prime to all the primes in $P$, and $q$ is prime to $|G|$ and the primes in $P$.  So these results do not see the part of the class group that is affected by roots of unity in $\F_q(t)$ \cite{Malle2008,Garton2015}.)  Precisely, we have the following.

\begin{theorem}[Corollary of {\cite[Corollary 1.5]{Liu2019}} and {\cite[Theorems 6.2 and 6.11]{Wang2021}}]\label{T:FFCM}
Let $G$ be a finite group and $P$ be a finite set of primes that are relatively prime to $|G|$.
Let $B$ be a finite  abelian $P$-group $\Z[G]$-module, and $B^G=0$.

Let $K_{q,n}$ be a uniform random Galois $G$-extension $K$ of $\F_q(t)$, split completely over $\infty$, with the norm of the radical of its discriminant $K/\F_q(t)$ at most $q^n$.  
Let $X_{q,n}$ be the product of the Sylow $p$-subgroups of the class group of $K_{q,n}$ (more precisely, of its ring of integers over $\F_q[t]$) for $p\in P$.

Then if $q_n$ is a sequence of prime powers growing sufficiently fast in $n$, such that for all $n$ we have 
that $q_n$ is relatively prime to $|G|$ and all the primes in $P$ and $q_n-1$ is relatively prime to all the primes in $P$,
then
$$
\lim_{n\ra\infty} 
\Prob(X_{q_n,n}\isom B )
 =\frac{c}{|B||\Aut_G(B)|},$$
where $c$ is a constant depending on $G$ and $P$ such that the limiting probabilities above sum, over $B$, to $1$.
\end{theorem}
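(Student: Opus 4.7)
The plan is to combine three ingredients: the moment estimates of Liu, Zureick-Brown, and the author \cite{Liu2019}, the explicit Cohen-Lenstra-Martinet moment formulas of Wang and the author \cite[Theorem 6.2]{Wang2021}, and the robust uniqueness statement of Theorem~\ref{T:Gmoduniq}.

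First, I would invoke \cite[Corollary 1.5]{Liu2019}, which via a Hurwitz-scheme component count shows that for every finite $P$-group $\Z[G]$-module $A$ with $A^G = 0$, the moment $\E(\#\Sur_G(X_{q,n}, A))$ approaches a Cohen-Lenstra-Martinet moment $M_A^{CLM}$ in the iterated limit ($q \to \infty$ then $n \to \infty$) under the stated coprimality constraints. A standard diagonal argument then produces a single sequence of prime powers $q_n \to \infty$ satisfying those constraints such that
\[
\lim_{n\to\infty} \E(\#\Sur_G(X_{q_n,n}, A)) = M_A^{CLM} \quad \text{for every fixed } A.
\]

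Second, I would use \cite[Theorem 6.2]{Wang2021} to identify $M_A^{CLM}$ as the $A$-Sur-moment of the random $\Z[G]'$-module $Y$ with $\Prob(Y \isom B) = c/(|B|\,|\Aut_G(B)|)$, and to verify directly from its explicit product formula the growth bound $M_A^{CLM} = O(|\wedge^2_{\Z[G]'} A|)$ required by Theorem~\ref{T:Gmoduniq}. Since $P$ is coprime to $|G|$, every finite abelian $P$-group $\Z[G]$-module is automatically a $\Z[G]'$-module, so both $X_{q_n,n}$ and $Y$ live in the category $\mathcal{A}$ to which Theorem~\ref{T:Gmoduniq} applies.

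Finally, Theorem~\ref{T:Gmoduniq} upgrades this moment convergence to weak convergence of $X_{q_n,n}$ to $Y$, which in the discrete topology on finite groups is exactly pointwise convergence of $\Prob(X_{q_n,n} \isom B)$ to $c/(|B|\,|\Aut_G(B)|)$, the claimed limit. The main obstacle is the diagonal extraction: \cite[Corollary 1.5]{Liu2019} only controls an iterated limit, whereas Theorem~\ref{T:Gmoduniq} demands a single sequence whose moments jointly converge across all test modules $A$. A routine diagonalization resolves this—for each $n$, choose $q_n$ large enough that $|\E(\#\Sur_G(X_{q_n,n},A)) - M_A^{CLM}| \leq 1/n$ for all $A$ with $|A| \leq n$—so that for any fixed $A$, eventually $|A|\leq n$ and the moment converges as required. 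The substantive work has been done in \cite{Liu2019} and \cite{Wang2021}; the present theorem is the clean packaging of those results through Theorem~\ref{T:Gmoduniq}.
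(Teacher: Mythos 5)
Your overall strategy matches the paper's: invoke \cite[Corollary 1.5]{Liu2019} for the moments of $X_{q,n}$, invoke \cite[Theorem 6.2]{Wang2021} to identify the target moments and the limiting random module $Y$, and apply Theorem~\ref{T:Gmoduniq} to upgrade moment convergence to convergence in distribution, with a diagonalization to go from the iterated limit (first $q\to\infty$, then $n\to\infty$) to a single sequence $q_n$.

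There is one genuine gap. Theorem~\ref{T:Gmoduniq} requires moment convergence for \emph{every} $A$ in the category of finite $P$-group $\Z[G]'$-modules, but \cite[Corollary 1.5]{Liu2019} only gives you $\lim \E(\#\Sur_G(X_{q,n},A)) = |A|^{-1}$ for $A$ with $A^G=0$, and your proposal only establishes convergence for that subclass. You also need to handle $A$ with $A^G\ne 0$: for such $A$ the target moment $\E(\#\Sur_G(Y,A))$ is $0$ (since $Y$ is supported on modules $B$ with $B^G=0$, and a surjective $G$-module map from such a $B$ onto $A$ would force $A^G=0$ because $(|B|,|G|)=1$ makes the category semisimple). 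Matching this requires the observation, which the paper makes using \cite[Cor.~7.7]{Cohen1990}, that $X_{q,n}^G=0$ always (the class group of the base $\F_q[t]$ is trivial and $(|X_{q,n}|,|G|)=1$), so $\#\Sur_G(X_{q,n},A)=0$ identically when $A^G\ne 0$. Without this, the hypothesis of Theorem~\ref{T:Gmoduniq} is not verified.

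A smaller issue: your stated diagonalization (``choose $q_n$ so the moment error is $\le 1/n$ for all $|A|\le n$'') presumes that for any fixed $n$ the $q\to\infty$ limit of the moment is already within $1/n$ of the target for all those $A$, but \cite[Corollary 1.5]{Liu2019} only guarantees the $q\to\infty$ limit is $\epsilon$-close once $n\ge N_\epsilon(A)$, and $N_{1/n}(A)$ may exceed $n$. The fix is standard (enumerate the $A$, and let both the error tolerance and the set of $A$'s tracked grow with $n$ only as fast as the $N_\epsilon(A)$ permit), but your phrasing as written does not quite go through.
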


\begin{proof}
By \cite[Corollary 1.5]{Liu2019},  for every finite abelian $P$-group $\Z[G]$-module $H$ with $H^G=0$, and every $\epsilon>0$, there is an $N_\epsilon$, such that for $n\geq N_\epsilon$, we have
$$
\left| \lim_{\substack{ q\ra\infty\\ (q,|G|)=1\\ (q(q-1),p)=1 \textrm{ for $p\in P$}}} \E(\#\Sur_G( X_{q,n},H  )) - |H|^{-1} \right| \leq \epsilon/2.
$$
For $n\geq N_\epsilon$, we choose a $Q_{n,\epsilon}$ such that  for  $q\geq Q_{n,\epsilon}$ (satisfying the conditions above) we have 
$$
 \left| \E(\#\Sur_G( X_{q,n},H  )) - |H|^{-1} \right| \leq \epsilon.
$$
So, if for each $n$, we consider the smallest $\epsilon$ such that $n\geq N_\epsilon$, and then take $q_n\geq Q_{n,\epsilon}$, we have
$$
\lim_{n\ra\infty} \E(\#\Sur_G( X_{q_n,n},H  ))=|H|^{-1}.
$$

Since $\bCl\O_K$ is trivial and $(|X_{q,n}|,|G|)=1$, we have $X_{q,n}^G=0$ \cite[Cor. 7.7]{Cohen1990}, so if $H$ is such that $H^G\ne 0$, we have
$\#\Sur_G( X_{q,n},H  )=0$.
By \cite[Theorem 6.2]{Wang2021}, we have that these are also the moments of the random $\Z[G]$-module $Y$ such that for any 
finite  abelian $P$-group $\Z[G]$-module $B$ with $B^G=0$ (on which $Y$ is supported)
$$
\Prob(Y\isom B)=\frac{c}{|B||\Aut_G(B)|},
$$
where $c$ is a constant depending only on $P$ and $G$.  Thus by Theorem~\ref{T:Gmoduniq} we conclude the theorem.
\end{proof}

As described by Wang and the author \cite[Sections 7-8]{Wang2021}, the class groups of non-Galois fields, away from certain bad primes, are also modules for a certain maximal order $\mathfrak{o}$ in a semi-simple algebra depending on the Galois group $G$ of the 
Galois closure over $\Q$ and over the field itself, and moreover are determined (as modules) from the class group of the Galois closure.
 The algebra $\mathfrak{o}$ can be non-trivial even when the non-Galois field has no automorphism.  
We can thus show that the Cohen-Lenstra-Martinet heuristics imply conjectures for the distribution of class groups of non-Galois fields.
For the part of the class group prime to $|G|$, analogous results to Theorem~\ref{T:FFCM} for the non-Galois case then follow formally from Theorem~\ref{T:FFCM} and the results in \cite{Wang2021}.
However, for non-Galois extensions, the ``bad'' primes avoided by the conjectures are not always all primes dividing $|G|$. 
So at certain ``good'' primes $p$ dividing $|G|$, we have shown in \cite[Theorem 8.14]{Wang2021} that the Cohen-Lenstra-Martinet heuristics imply a conjectural distribution on
the Sylow $p$-subgroups of class groups of non-Galois extensions (with Galois closure of group $G$) as well.  See \cite[Theorem 8.14]{Wang2021} for the relevant notion of good primes.  Here we mention a few examples of good primes: $2$ for $S_3$ cubic extensions, $3$ for $A_4$ and $S_4$ quartic extensions, $2$ for quintic $D_5$ or $A_5$ extensions.  The moment calculations and the unique robustness of the moment problem results in \cite{Wang2021} include the situations for all good primes for non-Galois extensions, as they are more generally for distributions of modules over maximal orders in semi-simple algebras.

In particular, the robust uniqueness result in \cite[Theorem 6.11]{Wang2021} is a version of Theorem~\ref{T:Wmom} in which $\Z[G]'$ is replaced by a maximal order in a semi-simple algebra.
Sawin \cite[Theorem 1.3]{Sawin2020} has proven an version of Theorem~\ref{T:Wmom}, in which $\Z[G]'$ is replaced by any associative algebra $R$ such that there are only finitely many isomorphism classes of finite simple $R$-modules, and $\operatorname{Ext}^1_R$ between any two finite $R$-modules is finite, but one requires the stronger assumption that $M_A=O(|A|^{O(1)})$.

As another example of additional structure, for the Sylow $p$-subgroups of class groups of quadratic extensions of $\F_q(t)$, Lipnowski, Sawin, and Tsimerman find that these groups have additional structure when $p^n\mid q-1$ \cite{Lipnowski2020} (where $q-1$ crucially is the number of roots of unity in $\F_q(t)$).  This structure involves two pairings and a compatibility relation, and they call a  group with such structure a $p^n$-Bilinearly Enhanced Group.  In \cite[Section 8]{Lipnowski2020}, they define moments for these enhanced groups and address the uniqueness and robustness aspects of the moments problem in this context.  They then apply their moment problem result, along with the homological stability results of Ellenberg, Venkatesh, and Westerland \cite{Ellenberg2016}, 
 to give a limiting distribution of Sylow $p$-subgroups of class groups of quadratic extensions of $\F_q(t)$, along with this extra structure.

\subsection{Random non-abelian groups}\label{SS:nonabmom}
One can also consider random non-abelian groups.  A natural such group arising in number theory is $\Gal(K^{un}/K)$, the Galois group of the maximal unramified extension of some random number field $K$.
We have that $\Gal(K^{un}/K)=\pi_1^{\acute{e}t}(\Spec \O_K)$ and this group has abelianization $\bCl_K$.  The maximal pro-$p$ quotient $G_p(K)$ of $\Gal(K^{un}/K)$ is the $p$-class tower group of $K$, the Galois group of $K^p$, the $p$-class tower of $K$.

Boston, Bush, and Hajir \cite{Boston2017a,Boston2021}, inspired by the Cohen-Lenstra heuristics, developed heuristics predicting the distribution of $G_p(K)$ for $K$ a random imaginary (respectively, real) quadratic field and $p$ an odd prime.  Boston and the author \cite{Boston2017} found the moments of the conjectural distribution of Boston-Bush-Hajir for imaginary quadratic fields, and prove robust uniqueness for the moment problem for these moments.  

Now, as we are considering random profinite groups, the set of isomorphism classes of groups under consideration is uncountable, and we need to be more precise about the measure theory.
For a quadratic field $K$, note that $G_p(K)$ has an action of $\Z/2\Z=\Gal(K/\Q)$, by lifting elements to $\Gal(K^{p}/\Q)$ and conjugating.  In general this would only be an outer action, but since $p$ is odd, by 
the Schur-Zassenhaus theorem we can find a splitting of $\Gal(K^{p}/\Q)\ra \Gal(K/\Q)$, and the resulting action of $\Gal(K/\Q)$ on $G_p(K)$ doesn't depend, up to isomorphism, on the choice of splitting.  
Let $\mathcal{G}_p$ be the set of isomorphism classes of finitely generated pro-$p$ groups with a continuous action of $\Z/2\Z$ (i.e. where morphisms must be equivariant for the $\Z/2\Z$ action).
A pro-$p$ group has a canonical lower $p$-central series defined by $P_0(G):=G$,  and for $n\geq 0$, we define $P_{n+1}(G)$ to be the closed subgroup generated by the commutators $[G,P_n(G)]$ and $P_n(G)^p$.  A finitely generated pro-$p$ group $G$ then has canonical finite quotients $Q_n(G):=G/P_n(G)$.  We let $\Omega$ be the $\sigma$-algebra on $\mathcal{G}_p$ generated by the sets 
$$
\{ G | Q_c(G)\isom P \},
$$
as $P$ ranges over $p$-groups.  We consider all random variables valued in $\mathcal{G}_p$ to be for the $\sigma$-algebra $\Omega$. (See \cite[Section 3]{Boston2017} for more details.)
With these preliminaries, we can state the uniqueness result of Boston and the author.
\begin{theorem}[{\cite[Theorems 1.3 and 1.4]{Boston2017}}]\label{T:momBBH}
Let $p$ be an odd prime.
There is a random $X_{BBH}\in \mathcal{G}_p$ whose distribution is the predicted distribution of Boston-Bush-Hajir for $G_p(K)$ for imaginary quadratic $K$.
For all finite $P\in \mathcal{G}_p$, we have
$$
\E(\#\Sur_{\Z/2\Z}(X_{BBH},P))=1,
$$
If we have a random $X\in \mathcal{G}_p$ such that for all finite $P\in \mathcal{G}_p$, we have
$$
\E(\#\Sur_{\Z/2\Z}(X,P))=1,
$$
then $X$ has the same distribution as $X_{BBH}$.
\end{theorem}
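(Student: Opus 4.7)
The plan attacks the three parts of Theorem~\ref{T:momBBH} in sequence. For \emph{existence}, the Boston--Bush--Hajir recipe specifies, for each $c \geq 0$, a probability measure on the countable set $\mathcal{Q}_c$ of isomorphism classes of finite $\Z/2\Z$-equivariant $p$-groups of class at most $c$, essentially of the form $\Prob(Q_c(X_{BBH}) \isom P) \propto 1/|\Aut_{\Z/2\Z}(P)|$ adjusted by a factor counting balanced equivariant presentations of $P$. First I would verify that the level-$c$ measures are compatible under the quotient maps $Q_{c+1} \twoheadrightarrow Q_c$ by a direct computation using orbit--stabilizer identities to rewrite the compatibility condition as an equivariant-extension counting identity. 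Since $\Omega$ is generated by the cylinder sets $\{G \in \mathcal{G}_p : Q_c(G) \isom P\}$, the compatible family then extends uniquely to a measure on $(\mathcal{G}_p, \Omega)$ by an inverse-limit (Kolmogorov-type) argument.

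For the \emph{moment computation}, fix a finite equivariant $p$-group $P$, and let $c$ be an upper bound on its nilpotency class. Any equivariant surjection $X_{BBH} \twoheadrightarrow P$ factors through $Q_c(X_{BBH})$, so
$$
\E(\#\Sur_{\Z/2\Z}(X_{BBH}, P)) = \sum_{[Q] \in \mathcal{Q}_c} \Prob(Q_c(X_{BBH}) \isom Q) \cdot \#\Sur_{\Z/2\Z}(Q, P).
$$
Substituting the BBH formula and applying the orbit-counting identity $\#\Sur_{\Z/2\Z}(Q, P) = |\Aut_{\Z/2\Z}(P)| \cdot \#\{N \lhd_{\Z/2\Z} Q : Q/N \isom P\}$ rewrites the sum as a weighted enumeration of equivariant extensions of $P$, which telescopes to $1$ once one uses the normalization of the $Q_c$-level measures established in the existence step.

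\emph{Uniqueness} is the main technical step and follows the strategy of Theorem~\ref{T:EVW} adapted to the non-abelian equivariant profinite setting. Since $\Omega$ is generated by the events $\{Q_c(G) \isom P\}$, it suffices to show that for each $c$ and each $P \in \mathcal{Q}_c$, the family of moments $\{\E(\#\Sur_{\Z/2\Z}(X, Q))\}_{Q \in \mathcal{Q}_c}$ determines $\Prob(Q_c(X) \isom P)$. Using $\#\Sur_{\Z/2\Z}(X, Q) = \#\Sur_{\Z/2\Z}(Q_c(X), Q)$ for any $Q$ of class $\leq c$, this reduces to a moment problem for distributions on the countable set $\mathcal{Q}_c$. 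I would implement the inversion via a M\"obius-type identity
$$
|\Aut_{\Z/2\Z}(P)| \cdot \Prob(Q_c(X) \isom P) = \sum_{[P']} \nu(P, P') \cdot \E(\#\Sur_{\Z/2\Z}(X, P')),
$$
with $[P']$ ranging over iso classes in $\mathcal{Q}_c$ admitting $P$ as an equivariant quotient and $\nu(P, P')$ an explicit signed count of chains of equivariant normal subgroups interpolating $P'$ and $P$.

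The main obstacle will be verifying absolute convergence of this series under the very weak hypothesis that every moment equals $1$: one must bound $|\nu(P, P')|$ against $|\Aut_{\Z/2\Z}(P')|$ and show the tail over $P'$ with $|P'| \to \infty$ is summable. This is the non-abelian analogue of the $O(|\wedge^2 A|)$-type growth bound driving Theorem~\ref{T:Wmom}, and I would attack it via bounds on the number of generators and equivariant normal subgroups of a finite $p$-group of bounded class, combined with rapid automorphism-group growth estimates of the form $|\Aut_{\Z/2\Z}(P')| \gg |P'|^{d(P')}$ for $d(P')$ the minimal number of equivariant generators. Once absolute convergence is in hand, substituting $\E(\#\Sur_{\Z/2\Z}(X, Q)) = 1$ yields a closed formula for $\Prob(Q_c(X) \isom P)$ that depends only on $P$ and $c$, and by the moment computation above it must agree with the corresponding probability for $X_{BBH}$.
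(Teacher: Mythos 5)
The existence and moment-computation steps are reasonable sketches and broadly match what is done in \cite{Boston2017}: the orbit-counting identity you invoke and the compatibility-of-level-$c$-measures argument for building $X_{BBH}$ by an inverse-limit extension are the standard tools there. The gap is in the uniqueness step, and specifically in the convergence estimate that you correctly identify as ``the main obstacle.'' You propose to verify summability of the M\"obius-type inversion by direct group-theoretic bounds---estimates on numbers of equivariant generators and normal subgroups of a finite $p$-group of bounded class, combined with automorphism-group growth of the shape $|\Aut_{\Z/2\Z}(P')|\gg|P'|^{d(P')}$. But that direct route is exactly what the surrounding discussion flags as unresolved. The relevant convergence condition is the hypothesis of Theorem~\ref{T:Mcond}, the sum in~\eqref{E:Mcond}, ranging over \emph{all} isomorphism classes of finite $\Z/2\Z$-equivariant $p$-groups of class at most $c$, for which there is no convenient parametrization; the paper explicitly says it is ``not at all clear'' how to evaluate this sum and that doing so ``seems to involve a rather difficult group theory problem.''

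What \cite{Boston2017} actually does, per the paper's own commentary, is verify that \eqref{E:Mcond} holds (with all $M_P=1$, $p$ odd) ``by a round-about argument that uses the construction of $X_{BBH}$'': roughly, the fact that $X_{BBH}$ has already been constructed as a genuine probability measure, whose level-$c$ pushforwards are explicit and sum to $1$ and whose Sur-moments are all $1$, is used to control the tail of the sum, rather than raw counts of $p$-groups against automorphism sizes. Your plan reverses this logical dependence: you want to establish convergence first by bare group theory, then deduce uniqueness. That would be a genuinely different route if it worked, but you give no concrete evidence that the count of isomorphism classes of $\Z/2\Z$-equivariant $p$-groups of class $\le c$ (which grows very rapidly with the number of generators) is actually dominated by your automorphism lower bound, nor that the resulting sum is not merely finite but under the threshold $2$ that the iterative argument in Theorem~\ref{T:Mcond} requires. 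As written this is a plan to attack a known hard sub-problem, not a proof of it; to close it you should either supply the actual estimate or switch to the bootstrap-via-$X_{BBH}$ argument that the original proof uses.
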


The argument in \cite{Boston2017} actually shows the following more general uniqueness result.
\begin{theorem}[{see \cite[Lemma 4.7, proof of Theorem 4.9]{Boston2017}}]\label{T:Mcond}
Let $p$ be a prime and $M_P\in \R$ for each finite $P\in \mathcal{G}_p$.
Let $\mathcal{G}^c_p$  the image of $\mathcal{G}_p$ under $Q_c$.  
Suppose that for each $c\geq 0$ and each $P\in\mathcal{G}^c_p$, we have
\begin{equation}\label{E:Mcond}
\sum_{Q\in \mathcal{G}^c_p} \frac{M_Q|\Sur_{\Z/2\Z}(Q,P)|}{M_P|\Aut_{\Z/2\Z}(Q)|}<2.
\end{equation}
If we have  random $X,Y\in \mathcal{G}_p$ such that for all finite $P\in \mathcal{G}_p$, we have
$$
\E(\#\Sur_{\Z/2\Z}(X,P))=\E(\#\Sur_{\Z/2\Z}(Y,P))=M_P,
$$
then $X$ and $Y$ have the same distribution.
\end{theorem}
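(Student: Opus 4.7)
The plan is to recover, for each $c \geq 0$ and each finite $P \in \mathcal{G}^c_p$, the probability $\Prob(Q_c(X) \isom P)$ from the moments $(M_P)$. Because every $\Z/2\Z$-equivariant surjection from a pro-$p$ group $X$ onto a group of lower $p$-central class $\leq c$ factors uniquely through $Q_c(X)$, we have
$$M_P \;=\; \E(\#\Sur_{\Z/2\Z}(X,P)) \;=\; \sum_{Q \in \mathcal{G}^c_p} \Prob(Q_c(X) \isom Q) \cdot \#\Sur_{\Z/2\Z}(Q, P),$$
with the analogous identity for $Y$. Since the sets $\{G : Q_c(G) \isom P\}$ form a $\pi$-system generating $\Omega$, establishing the agreement of these probabilities for all $c$ and $P$ implies $X$ and $Y$ have the same distribution.

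Fix $c$, and let $d_Q := \Prob(Q_c(X) \isom Q) - \Prob(Q_c(Y) \isom Q)$. Subtracting the two moment identities gives the homogeneous system $\sum_{Q \in \mathcal{G}^c_p} d_Q \cdot \#\Sur_{\Z/2\Z}(Q,P) = 0$ for every $P \in \mathcal{G}^c_p$. Isolating the diagonal contribution ($Q = P$, which equals $|\Aut_{\Z/2\Z}(P)|$) and applying the triangle inequality yields
$$|d_P| \cdot |\Aut_{\Z/2\Z}(P)| \;\leq\; \sum_{Q \neq P} |d_Q| \cdot \#\Sur_{\Z/2\Z}(Q, P).$$
Specializing the moment identity to $P = Q$ gives $\Prob(Q_c(X) \isom Q) \leq M_Q / |\Aut_{\Z/2\Z}(Q)|$, and similarly for $Y$, so we have the a priori bound $|d_Q| \leq 2 M_Q / |\Aut_{\Z/2\Z}(Q)|$; hypothesis \eqref{E:Mcond} then ensures the sum above converges absolutely. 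Setting $e_P := |d_P| \cdot |\Aut_{\Z/2\Z}(P)| / M_P \in [0,2]$, the inequality rewrites as
$$e_P \;\leq\; \sum_{Q \neq P} e_Q \cdot \frac{M_Q \, \#\Sur_{\Z/2\Z}(Q, P)}{M_P \, |\Aut_{\Z/2\Z}(Q)|}.$$
The $Q=P$ coefficient in \eqref{E:Mcond} is exactly $1$, so the coefficients on the right sum (over $Q \neq P$) to a number strictly less than $1$ for each $P$.

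One then iterates: plugging the bound on $e_Q$ back into itself repeatedly, each step contracts $e_P$ strictly at every index $P$, and combined with the a priori ceiling $e_P \leq 2$ this should force $\sup_P e_P = 0$, so $d_P = 0$ for every $P$, completing the proof. The main obstacle is precisely this iteration step: condition \eqref{E:Mcond} provides only a pointwise, not a uniform, strict contraction, and $\mathcal{G}^c_p$ is countably infinite, so one cannot simply invoke a Banach-style fixed-point theorem on $\ell^\infty$. The argument must instead exploit the interplay between the pointwise strict inequality and the uniform cap $e_P \leq 2$, which is exactly what Lemma 4.7 of \cite{Boston2017} is built to handle.
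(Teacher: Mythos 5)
Your setup is right: the moment identity, the factorization of surjections through $Q_c$, the reduction via a $\pi$-system generating $\Omega$, and the derivation of the self-referential inequality $e_P \leq \sum_{Q\neq P} e_Q\, a(Q,P)$ with $a(Q,P) = M_Q\#\Sur_{\Z/2\Z}(Q,P)/(M_P|\Aut_{\Z/2\Z}(Q)|)$, together with the observation that the row sums $\sum_{Q\neq P} a(Q,P)$ are strictly less than $1$ by \eqref{E:Mcond}. These are exactly the right ingredients.

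The gap, which you yourself flag, is the final iteration step, and it is a genuine one: the sketch ``each step contracts $e_P$ strictly at every index $P$, and combined with the a priori ceiling $e_P\le 2$ this should force $\sup_P e_P=0$'' does not work as stated. A pointwise bound $\sum_{Q\neq P} a(Q,P) < 1$ together with the cap $e_P \le 2$ does \emph{not} force the only solution of the inequality system to be zero. If you iterate the map $e \mapsto (\sum_{Q\neq P} e_Q\,a(Q,P))_P$ starting from the constant $2$, the sequence decreases monotonically, but its limit is the \emph{largest} fixed point in $[0,2]^{\mathcal{G}_p^c}$, which need not be zero: one can construct abstract systems (e.g.\ supported on an infinite chain $\cdots \to P_2 \to P_1 \to P_0$, with row sums of the form $1 - \epsilon_j$ where $\epsilon_j \to 0$ fast enough) in which the inequalities and the cap are satisfied with $e_P > 0$ for all $P$. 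So the pointwise contraction plus the cap is strictly weaker than what you need, and some further input is required. You explicitly defer this missing input to \cite[Lemma 4.7]{Boston2017} rather than supplying it, which is precisely the substance of the theorem. The paper's argument closes this by a more careful manipulation of the linear system (an explicit truncated/alternating inversion for each $P$ whose convergence is controlled by \eqref{E:Mcond}) rather than a fixed-point iteration; without something of that sort your proof does not go through.
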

The challenge in applying Theorem~\ref{T:Mcond} is that  it is not at all clear how one can evaluate the sum in \eqref{E:Mcond}.
Note that \eqref{E:Mcond} is  a sum of quite a different flavor than if we were considering abelian groups.  In particular, we don't have any convenient enumeration of all finite $p$-groups, and so evaluating this sum seems to involve a rather difficult group theory problem.
In \cite{Boston2017}, we prove that \eqref{E:Mcond} holds when $p$ is odd and all $M_P$ are $1$, but by a round-about argument that uses the construction of $X_{BBH}$.

In \cite{Boston2017}, we analyze components of certain Hurwitz schemes to prove that in a certain function field analog some of the moments of $G_p(K)$ for quadratic $K/\F_q(t)$ (ramified at infinity)
agree with the conjectures of Boston, Bush, and Hajir.  In our result \cite[Theorem 1.5]{Boston2017}, we let the degree of the discriminant go to infinity, and then let $q$ go to infinity, and as in Theorem~\ref{T:FFCM} we require that 
$(q,2p)=1$ and $(q-1,p)=1$.  This result involves the generally more difficult limit of letting $q$ go to infinity after the bound on the discriminant, as in the theorem of \cite{Ellenberg2016}, and we also use the theorem of Ellenberg, Venkatesh, and Westerland \cite{Ellenberg2016}  on the homological stability of Hurwitz spaces in the proof.

While Theorem~\ref{T:momBBH} certainly helps contextualize the result of \cite{Boston2017} on function field moments, it doesn't immediately apply, because Theorem~\ref{T:momBBH} proves only uniqueness and not robust uniqueness, which would be required in our desired applications, as they involve limits of distributions. 
 In the non-abelian setting,  Sawin recently proved a robust uniqueness result however that can be applied.

We will now explain what is required for this robust uniqueness result for non-abelian profinite groups.  Fix a finite group $\Gamma$, and consider the set $\mathcal{G}$ of isomorphism classes of profinite groups
with a continuous action of $\Gamma$, finitely many surjections to any finite group, and all continuous finite quotients of order relatively prime to $|\Gamma|$.  We will we define a topology on $\mathcal{G}$, introduced by Liu, Zureick-Brown, and the author \cite{Liu2019} (based on \cite{Liu2020}), and our $\sigma$-algebra $\Omega$ will be the Borel $\sigma$-algebra for that topology.  As we used $Q_c(G)$ above, we would like our topology to filter our profinite groups by certain canonical finite quotients.  
We will make such a canonical finite quotient for any finite set $\mathcal{C}$ of finite groups with an action of $\Gamma$ (we call these \emph{$\Gamma$-groups}).  Let $\bar{\mathcal{C}}$ be the closure of $\mathcal{C}$ under taking
$\Gamma$-equivariant subgroups, products, and quotients.  Let $G^{\mathcal{C}}$ be the inverse limit of all quotients of $G$ that are in $\bar{\mathcal{C}}$.
Then these $G^{\mathcal{C}}$ (indexed by finite sets $\mathcal{C}$ of finite groups) are the canonical quotients we will use.  We then use the topology on $\mathcal{G}$ whose open sets are generated by
$$
\{ G | G^{\mathcal{C}}\isom H \},
$$
where $H$ ranges over all finite $\Gamma$-groups. 
Then Sawin's robust uniqueness result can be stated as follows.

\begin{theorem}[{\cite[Theorem 1.2]{Sawin2020}}]\label{T:Sawin}
Let $\Gamma$ be a finite group and $\mathcal{C}$ be a finite set of finite $\Gamma$-groups whose orders are relatively prime to $|\Gamma|$.  
For every finite $\Gamma$-group $H$, let  $M_H\in \R$ such that $M_H=O(|H|^{O(1)})$.
Let $Y,X_1,X_2,\dots$ be random groups in $\mathcal{G}$.  Assume that for every finite  $\Gamma$-group $H$ with $H^\mathcal{C}=H$, we have
$$
\lim_{n\ra\infty} \E(\#\Sur_\Gamma(X_n,H))=\E(\#\Sur_\Gamma(Y,H)),
$$
Then for every finite  group $H$ with an action of $\Gamma$,
\begin{equation}\label{E:Clim}
\lim_{n\ra\infty} \Prob( X_n^{\mathcal{C}}\isom H )=\Prob( Y^{\mathcal{C}}\isom H ).
\end{equation}
\end{theorem}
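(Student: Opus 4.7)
The plan is to invert the Sur-moment data to distributional data via Möbius inversion on the poset of finite $\Gamma$-quotients in $\bar{\mathcal{C}}$, using the polynomial bound $M_H = O(|H|^{O(1)})$ and the coprimality of orders in $\mathcal{C}$ with $|\Gamma|$ to make the inversion converge uniformly in $n$.

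First I would reduce: since $X_n^{\mathcal{C}}$ is by construction a pro-$\bar{\mathcal{C}}$ object, $\Prob(X_n^{\mathcal{C}} \isom H) = 0$ for finite $H \notin \bar{\mathcal{C}}$, and the same for $Y$; thus it suffices to prove (\ref{E:Clim}) for finite $H \in \bar{\mathcal{C}}$ (which automatically satisfies $H^{\mathcal{C}} = H$). For such $H$, a $\Gamma$-equivariant continuous surjection $X_n \twoheadrightarrow H$ factors uniquely through $X_n^{\mathcal{C}}$, giving the moment identity
\[ \E(\#\Sur_\Gamma(X_n, H)) \;=\; \sum_{G} \Prob(X_n^{\mathcal{C}} \isom G) \cdot \#\Sur_\Gamma(G, H), \]
the sum running over isomorphism classes in the support of $X_n^{\mathcal{C}}$. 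On the sub-poset of finite $G \in \bar{\mathcal{C}}$ with $G \twoheadrightarrow H$, the matrix $A_{G,H} := \#\Sur_\Gamma(G, H)$ is upper-triangular with $A_{G,G} = |\Aut_\Gamma(G)|$, and thus admits a formal Möbius inverse with coefficients $\alpha(G, H)$. Inverting, one aims to show
\[ \Prob(X_n^{\mathcal{C}} \isom H) \;=\; \sum_{G \in \bar{\mathcal{C}},\; G \twoheadrightarrow H} \alpha(G, H) \cdot \E(\#\Sur_\Gamma(X_n, G)) \;+\; \epsilon_n(H), \]
where $\epsilon_n(H)$ captures the contribution of possibly infinite quotients of $X_n^{\mathcal{C}}$ plus the formal truncation error.

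The heart of the proof, and the main obstacle, is to make this rigorous: to show $(i)$ the series above converges absolutely and uniformly in $n$, and $(ii)$ $\epsilon_n(H) \to 0$ uniformly in $n$, both with effective control. Against $\sup_n \E(\#\Sur_\Gamma(X_n, G)) = O(|G|^{O(1)})$ (which follows from convergence plus the hypothesis on $M_G$), one needs bounds on $|\alpha(G, H)|$ decaying fast enough in $|G|/|H|$, together with a bound on the number of isomorphism classes of $G \in \bar{\mathcal{C}}$ of each size mapping onto $H$. The coprimality of orders in $\mathcal{C}$ with $|\Gamma|$ is crucial here: via Schur--Zassenhaus and the corresponding vanishing of $\Gamma$-cohomology, $\Gamma$-equivariant extensions within $\bar{\mathcal{C}}$ split as semidirect products, and one can analyze both the cardinality and the Möbius function of the finite-quotient poset by inducting through these splittings. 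This is considerably subtler than the abelian Theorem \ref{T:Wmom}, where the tight bound $O(|\wedge^2 A|)$ comes from the explicit structure of $\wedge^2 A$, and no analogous invariant is available in the non-abelian setting.

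Once one has $\sum_{|G| > N} |\alpha(G, H)| \cdot \sup_n \E(\#\Sur_\Gamma(X_n, G)) = O_H(N^{-\delta})$ for some $\delta > 0$, with a similar bound for $\epsilon_n$, the rest is a standard diagonal argument. Given $H \in \bar{\mathcal{C}}$ and $\varepsilon > 0$, truncate at $|G| \leq N$ large enough that all tails are below $\varepsilon$ for every $n$; on the finite truncation, apply the hypothesis to pass $n \to \infty$ term-by-term; the limit is the same expression for $Y$, which by the same inversion equals $\Prob(Y^{\mathcal{C}} \isom H)$ up to an error $<\varepsilon$. Letting $\varepsilon \to 0$ gives $\lim_n \Prob(X_n^{\mathcal{C}} \isom H) = \Prob(Y^{\mathcal{C}} \isom H)$, which is (\ref{E:Clim}).
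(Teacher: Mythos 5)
Your proposal is an outline rather than a proof, and the gap sits exactly at the step the paper flags as the reason the direct-inversion strategy fails in the non-abelian setting. You write down the expansion $\E(\#\Sur_\Gamma(X_n,H)) = \sum_G \Prob(X_n^{\mathcal{C}}\isom G)\,\#\Sur_\Gamma(G,H)$ and propose to invert it formally on the poset of finite $G\in\bar{\mathcal{C}}$, deferring to a needed-but-unproven estimate $\sum_{|G|>N}|\alpha(G,H)|\sup_n\E(\#\Sur_\Gamma(X_n,G)) = O_H(N^{-\delta})$. That estimate is the whole problem, and your sketch of how to obtain it --- Schur--Zassenhaus to split extensions, then ``analyze both the cardinality and the M\"obius function of the finite-quotient poset by inducting through these splittings'' --- does not engage with why it is hard. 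The number of isomorphism classes of $\Gamma$-groups of order $m$ in $\bar{\mathcal{C}}$ can grow superpolynomially in $m$; the factor $|\Aut_\Gamma(G)|^{-1}$ on the diagonal of the inverse matrix does not compensate for this; and the alternating sums defining $\alpha(G,H)$ need not exhibit the cancellation your bound implicitly requires. You also do not address whether $X_n^{\mathcal{C}}$ is finite almost surely --- it need not be a priori, since $\bar{\mathcal{C}}$ is closed under finite products and the inverse limit of the $\bar{\mathcal{C}}$-quotients can be infinite --- so your term $\epsilon_n$ could absorb all the mass of the distribution, not a vanishing tail.

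Sawin's actual argument is structurally different and designed precisely to avoid any such global bound. As the paper describes, the theorem is ``proved by a localization process that reduces the question to one only involving a smaller list of groups that be classified and for which the number of surjections can be simply expressed.'' One reduces the claim \eqref{E:Clim} for a fixed $H$ to a sequence of smaller moment problems involving only collections of groups controlled enough to be classified and counted outright, where the surjection counts are explicit and the linear systems are genuinely finite or have tractable structure; the polynomial hypothesis $M_H = O(|H|^{O(1)})$ is used to control each local step, not a tail sum over all of $\bar{\mathcal{C}}$. Because one never inverts the whole infinite system at once, the global M\"obius-decay estimate that your outline presupposes is never needed. The paper's own discussion makes this explicit: the explicit-inversion technique of Theorem~\ref{T:only2} ``becomes less and less tractable as the groups get more complicated,'' since ``for non-abelian groups, there is no reasonable formulaic parametrization of the groups and their numbers of surjections.'' The missing idea in your proposal is exactly Sawin's localization.
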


\begin{corollary}\label{C:Sawin}
Let $\Gamma$ be a finite group.  
For every finite $\Gamma$-group $H$, let  $M_H\in \R$ such that $M_H=O(|H|^{O(1)})$.
Let $Y,X_1,X_2,\dots$ be random groups in $\mathcal{G}$.  Assume that for every finite  $\Gamma$-group $H$, we have
$$
\lim_{n\ra\infty} \E(\#\Sur_\Gamma(X_n,H))=\E(\#\Sur_\Gamma(Y,H)).
$$
Then the distributions of the $X_i$ weakly converge to the distribution of $Y$.
\end{corollary}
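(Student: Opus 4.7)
The plan is to deduce Corollary~\ref{C:Sawin} from Theorem~\ref{T:Sawin} by checking weak convergence against each basic open set of $\mathcal{G}$. By the paper's notation section, weak convergence in distribution on $\mathcal{G}$ is equivalent to $\lim_n \Prob(X_n \in U) = \Prob(Y \in U)$ for every basic open $U$, and the basic opens of $\mathcal{G}$ are the sets $U_{\mathcal{C}, H} := \{G \in \mathcal{G} : G^{\mathcal{C}} \isom H\}$ indexed by finite sets $\mathcal{C}$ of finite $\Gamma$-groups and finite $\Gamma$-groups $H$.

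The main step is a reduction to the hypotheses of Theorem~\ref{T:Sawin}, which demands that $\mathcal{C}$ consist of $\Gamma$-groups of order coprime to $|\Gamma|$. I claim that for any finite set $\mathcal{C}$ of finite $\Gamma$-groups there is a finite set $\mathcal{C}_0$ of finite $\Gamma$-groups of order coprime to $|\Gamma|$ with $G^{\mathcal{C}} = G^{\mathcal{C}_0}$ for every $G \in \mathcal{G}$. Taking $\mathcal{C}_0$ to be the set of $\Gamma$-equivariant subquotients of elements of $\mathcal{C}$ having order coprime to $|\Gamma|$, the inclusion $\overline{\mathcal{C}_0} \subseteq \bar{\mathcal{C}}$ is immediate. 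Conversely, any finite quotient $Q$ of $G \in \mathcal{G}$ lying in $\bar{\mathcal{C}}$ automatically has order coprime to $|\Gamma|$ (since all continuous finite quotients of $G$ do), and realizing $Q$ as a quotient of a $\Gamma$-subgroup $S$ of some product $\prod_i A_i^{n_i}$ with $A_i \in \mathcal{C}$, the surjection $S \twoheadrightarrow Q$ factors through the image $T$ of $S$ in $\prod_i (A_i')^{n_i}$, where $A_i'$ is the maximal quotient of $A_i$ of order coprime to $|\Gamma|$ (the kernel of this reduction has $|\Gamma|$-power order and hence maps trivially to the coprime-order group $Q$). Each $A_i' \in \mathcal{C}_0$, so $Q$ is a subquotient of a product of elements of $\mathcal{C}_0$ and therefore $Q \in \overline{\mathcal{C}_0}$, yielding $G^{\mathcal{C}} = G^{\mathcal{C}_0}$.

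With the reduction in hand, Theorem~\ref{T:Sawin} applies to $\mathcal{C}_0$: the growth hypothesis $M_H = O(|H|^{O(1)})$ is precisely the one supplied by the corollary, and the corollary's assumed moment convergence for \emph{every} finite $\Gamma$-group $H'$ is stronger than the version required by the theorem (which only involves $H'$ with $(H')^{\mathcal{C}_0} = H'$). The theorem then yields $\lim_n \Prob(X_n^{\mathcal{C}_0} \isom H) = \Prob(Y^{\mathcal{C}_0} \isom H)$ for every finite $\Gamma$-group $H$, which by the reduction is $\lim_n \Prob(X_n \in U_{\mathcal{C}, H}) = \Prob(Y \in U_{\mathcal{C}, H})$; running this across all basic opens gives weak convergence. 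The only real content is in Theorem~\ref{T:Sawin}; the reduction above is a short group-theoretic verification and is not expected to present any serious obstacle.
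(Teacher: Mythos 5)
Your reduction hinges on the parenthetical claim that the kernel of $A_i\to A_i'$ (with $A_i'$ the maximal quotient of $A_i$ of order coprime to $|\Gamma|$) ``has $|\Gamma|$-power order,'' and that claim is false in general. For a concrete counterexample take $\Gamma=\Z/2\Z$ acting trivially and $A_i=A_5$: since $A_5$ is simple of even order, its only quotient of odd order is trivial, so $A_i'=1$ and the kernel is all of $A_5$, whose order $60$ is not a power of $2$. The conclusion you draw from it also fails in this example: $\Z/5\Z$ is a $\Gamma$-equivariant subquotient of $A_5$ of order coprime to $|\Gamma|$, but its image in $\prod_i(A_i')^{n_i}=1$ is trivial, so the surjection onto $\Z/5\Z$ does not factor through that image. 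In this particular case $\Z/5\Z$ does lie in your $\mathcal{C}_0$ for a different reason, but your \emph{argument} for $G^{\mathcal{C}}=G^{\mathcal{C}_0}$ does not establish it.

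The real difficulty your argument papers over is the following: you have $Q=S/N$ with $S\leq\prod_i A_i^{n_i}$, $Q$ of order coprime to $|\Gamma|$, but $S$ itself need not have order coprime to $|\Gamma|$. What you would actually want is a subgroup $R\leq S$ of order coprime to $|\Gamma|$ that still surjects onto $Q$; given such an $R$ you could conclude $Q\in\bar{\mathcal{C}_0}$ by projecting $R$ into each coordinate (each projection is then a $P$-subgroup of some $A_i$, hence in $\mathcal{C}_0$). That lift exists when $S$ is solvable (Hall's theorem, or Schur--Zassenhaus when $\gcd(|N|,|Q|)=1$), but there is no general existence theorem for such Hall-type subgroups, and nothing in the setup forces $S$ to be solvable. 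So as written there is a genuine gap: the identity $G^{\mathcal{C}}=G^{\mathcal{C}_0}$ for your choice of $\mathcal{C}_0$ is not proved, and it is not obviously true. To make this line of attack work one would need to either prove the needed lifting statement in the relevant generality, or sidestep it by observing that since every $G\in\mathcal{G}$ has all finite quotients of order coprime to $|\Gamma|$, the topology on $\mathcal{G}$ is already generated by the sets $\{G: G^{\mathcal{C}}\isom H\}$ with $\mathcal{C}$ ranging only over finite sets of $\Gamma$-groups of order coprime to $|\Gamma|$ (which is how this corollary is presumably intended to be read as immediate from Theorem~\ref{T:Sawin}); but that observation also requires an argument, and your current proof does not supply one.
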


Sawin proved Theorem~\ref{T:Sawin} in order to apply it to results of Liu, Zureick-Brown and the author \cite{Liu2019}.  We discussed above that the moments of the class groups of random $\Gamma$-extensions $K/\F_q(t)$ were found in the paper \cite{Liu2019} (as $q\ra \infty$), but  this paper found, more generally, the moments of 
$\Gal(K^\#/K)$, where $K^\#$ is the maximal unramified extension of $K$ that is prime to $|\Gamma|$, prime to $q(q-1)$, and split completely at infinity
\cite[Theorem 1.4]{Liu2019}.
Moreover, the paper constructed a distribution on random groups with these moments \cite[Theorem 1.2,Theorem 6.2]{Liu2019}.  Sawin's result then can be applied \cite[Theorem 1.1]{Sawin2020} to conclude that (in a limit where $q\ra\infty $ fast enough compared to $n$, similar to Theorem~\ref{T:FFCM}) 
the random profinite groups $\Gal(K^\#/K)$ converge in distribution to the  group constructed in \cite{Liu2019}. 

For quadratic extensions  $K/\F_q(t)$, we can apply the work of Liu, Zureick-Brown and the author \cite{Liu2019}, the homological stability result of Ellenberg, Venkatesh, and Westerland \cite{Ellenberg2016}, and Sawin's result Theorem~\ref{T:Sawin}, and find the limiting distribution of the maximal unramified odd extension of $K$ when $q,n\ra\infty$ in any way.
Let $X$ be a random  profinite group with an action of $\Z/2\Z$ with distribution $\mu_1$ from \cite[Section 4]{Liu2019} (with $\Gamma=\Z/2\Z$).
The measure of this distribution on basic opens is given explicitly in \cite[Equation (4.14)]{Liu2019}.
Let $\mathcal{F}_m$  be the free odd profinite group on $m$ generators, with a $\Z/2\Z=\langle \sigma \rangle$ action inverting each of the generators,
and let $y_i$ be independent random elements of $\mathcal{F}_m$ from Haar measure.
Then in \cite[Section 3]{Liu2019}, it is shown that $\mathcal{F}_m/\langle y_1^{-1}\sigma(y_1),\dots  y_{m+1}^{-1}\sigma(y_{m+1}) \rangle $
converge in distribution to $X$, as $m\ra\infty$.
Let $X_P$ be the pro-$P$ completion (i.e. the inverse limit of all the finite $P$-group quotients)
of $X$.

\begin{theorem}
Let $P$ be a finite set of odd primes.
Let $K_{q,n}$ be a uniform random  quadratic extension $K$ of $\F_q(t)$, split completely over $\infty$, with $\Nm \Disc K/\F_q(t)\leq q^n$.  
Let $K^P$ be the maximal unramified extension of $K$, split completely at infinity, all of whose finite subextensions have degree a product of primes in $P$.
Let $X_{q,n}= \Gal(K_{q,n}^P/K_{q,n}).$

Then as $q,n\ra\infty$ in any way such that $q$ is odd, relatively prime to the primes in $P$, and $q-1$ is relatively prime to the primes in $P$, 
then 
$$X_{q,n} \textrm{ converge in distribution to }X_P.$$
\end{theorem}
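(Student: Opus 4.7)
The plan is to verify the hypotheses of Sawin's robust uniqueness result, Corollary~\ref{C:Sawin}, with $\Gamma=\Z/2\Z$, for the sequence $X_{q,n}$ and the limit $X_P$.  Each $X_{q,n}$ inherits a $\Z/2\Z$-action by the Schur-Zassenhaus argument already used in Section~\ref{SS:nonabmom}, since $X_{q,n}$ is pro-$P$ and the primes in $P$ are odd.  Because both $X_{q,n}$ and $X_P$ are pro-$P$, we have $\#\Sur_{\Z/2\Z}(X_{q,n},H)=\#\Sur_{\Z/2\Z}(X_P,H)=0$ whenever $|H|$ has a prime factor outside $P$; so it suffices to show, for every finite $\Z/2\Z$-group $H$ with $|H|$ a product of primes in $P$, that
\ba
\lim_{q,n\to\infty} \E(\#\Sur_{\Z/2\Z}(X_{q,n},H)) = \E(\#\Sur_{\Z/2\Z}(X_P,H)),
\ea
with the common value of size $|H|^{O(1)}$.

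First I would interpret the moment on the left as a point count on a Hurwitz scheme.  Following the Galois-theoretic dictionary of \cite{Liu2019}, a $\Z/2\Z$-equivariant surjection from $X_{q,n}$ onto $H$ corresponds to a connected $H\rtimes\Z/2\Z$-Galois cover of $\P^1_{\F_q}$ extending $K_{q,n}/\F_q(t)$, unramified away from the branch points of $K_{q,n}/\F_q(t)$, split completely at $\infty$, and with specified local monodromy at each branch point.  Thus $\E(\#\Sur_{\Z/2\Z}(X_{q,n},H))$ can be written as a weighted ratio $\#\cHur_n^H(\F_q)/\#\calC_n(\F_q)$, where $\cHur_n^H$ is the Hurwitz scheme parameterizing such covers with $n$ branch points and $\calC_n$ is the configuration space parameterizing the quadratic base.

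For each fixed $n$, letting $q\to\infty$, Lang-Weil together with the identification in \cite[Theorem~1.4]{Liu2019} gives a limit $a_n$ equal to a ratio of geometric component counts, and letting $n\to\infty$ afterwards produces $\E(\#\Sur_{\Z/2\Z}(X_P,H))$ by the construction of $\mu_1$ in \cite[Section~4]{Liu2019}.  The same computation supplies the polynomial bound $M_H=O(|H|^{O(1)})$.  However, this argument only justifies the iterated limit $\lim_{n\to\infty}\lim_{q\to\infty}$, whereas Corollary~\ref{C:Sawin} demands convergence as $q,n\to\infty$ in any way.

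The main obstacle, and the heart of the argument, is this interchange of limits, and it is exactly what the homological stability theorem of Ellenberg, Venkatesh, and Westerland \cite{Ellenberg2016} is designed to supply.  Their stabilization of the rational cohomology of $\cHur_n^H$ in each fixed degree, combined with Deligne's Weil bounds (precisely as in the proof of \cite[Theorem~1.2]{Ellenberg2016}), yields an estimate
\ba
\E(\#\Sur_{\Z/2\Z}(X_{q,n},H)) = a_n + O_H(q^{-1/2})
\ea
with implied constant depending only on $H$, uniformly in $n$.  Since $a_n\to \E(\#\Sur_{\Z/2\Z}(X_P,H))$ as $n\to\infty$, the double limit therefore exists as $q,n\to\infty$ in any way (subject to the congruence conditions on $q$) and equals the desired value.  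Applying Corollary~\ref{C:Sawin} then yields the weak convergence of $X_{q,n}$ to $X_P$, completing the proof.
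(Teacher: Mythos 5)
Your proposal follows essentially the same route as the paper's own proof: interpret the $\Gamma$-moments as ratios of $\F_q$-point counts on Hurwitz schemes via the correspondence of \cite[Lemma 10.2]{Liu2019}, use Ellenberg--Venkatesh--Westerland homological stability together with the Grothendieck--Lefschetz trace formula to obtain an error bound uniform enough to justify the double limit $q,n\to\infty$, identify the limiting moments with those of $X_P$ via \cite[Theorem 6.2]{Liu2019}, and close by invoking Sawin's robust uniqueness result. The paper fills in details you elide — it restricts to $\Gamma$-groups $H$ with trivial coinvariants (the admissible ones, for which the Hurwitz-scheme count applies and the others contribute moment zero), uses \cite[Theorem 10.4]{Liu2019} to equate the number $z_n$ of Frobenius-fixed components in numerator and denominator so that the intermediate ratio is already $[H:H^\Gamma]^{-1}$ rather than a genuinely $n$-dependent $a_n$, sums over even $n\le N$ because the discriminant is bounded by $q^n$ rather than equal to it, and gets an explicit $O(q^{-1/4})$ error valid once $q\geq D^4$ — but these are refinements of your outline, not a different argument.
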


\begin{proof}
Let $\Gamma=\Z/2\Z$.
We follow \cite[Proof of Theorem 1.4]{Liu2019}, but will use the homological stability result of Ellenberg, Venkatesh, and Westerland \cite{Ellenberg2016}.
Let $H$ be a finite $P$-group with an action of $\Gamma$, such that the co-invariants $H_\Gamma$ are trivial (note this is equivalent to the admissibility condition in \cite{Liu2019}, given the condition on $P$).  

Let $q$ be a prime power relatively prime to $2$ and all the primes in $P$, and let $q-1$ be relatively prime to all the primes in $P$.  
Let $E_\Gamma(n,q)$ be the set of quadratic extensions $K/\F_q(t)$, split completely at infinity, with $\Nm \Disc K/\F_q(t)=q^n$.
Note $n$ must be even for there to exist such a $K$ (e.g. by the Riemann-Hurwitz formula).
Let $G=H\rtimes \Gamma$.  Let $c$ be the set of elements of $G$ of order $2$, and note by the Schur-Zassenhaus Theorem this is a single conjugacy class of 
$G$.
Then there are Hurwtiz schemes $\cHur_{ G,c}^n$, $\cHur_{ \Gamma,\Gamma\setminus\{1\}}^n$ constructed in \cite{Liu2019}, such that by \cite[Lemma 10.2]{Liu2019}
$$
[H:H^\Gamma] \sum_{K\in E_\Gamma(n,q) } \#\Sur_\Gamma (\Gal(K^P/K),H)  = \#\cHur_{ G,c}^n(\F_q)
$$ 
and
$$
\#E_\Gamma(n,q) =\#\cHur_{ \Gamma,\Gamma\setminus\{1\}}^n(\F_q).
$$
For $n$ sufficiently large given $G$, by \cite[Theorem 10.4]{Liu2019}, we have that $\#\cHur_{ G,c}^n$ and
$\#\cHur_{ \Gamma,\Gamma\setminus\{1\}}^n$ have the same number, $z_n$, of Frobenius fixed components over $\bar{\F}_q$.
Moreover, $z_n$ is positive for even $n$ because we know $\F_q(t)$ has quadratic extensions split completely at infinity
and so $\#\cHur_{ \Gamma,\Gamma\setminus\{1\}}^n$ has $\F_q$-points.
By the Grothendieck-Lefschetz trace formula, we have
$$
|\#\cHur_{ G,c}^n(\F_q)-z_nq^n|\leq \sum_{j=0}^{2n-1}  q^{j/2} \dim H^j_{c,\acute{e}t}((\cHur_{ G,c}^n)_{\bar{\F}_q},\Q_\ell),
$$
for some $\ell$ (\cite[Lemma 10.3]{Liu2019} tells us $(\cHur_{ G,c}^n)_{\bar{\F}_q}$ is smooth and $n$-dimensional).
By \cite[Lemma 10.3]{Liu2019}, we then have 
$$
|\#\cHur_{ G,c}^n(\F_q)-z_nq^n|\leq \sum_{j=0}^{2n-1} q^{j/2} \dim H^{2n-j}((\cHur_{ G,c}^n)_{\bC},\Q).
$$ 
By \cite[Theorem 6.1, Proposition 2.5]{Ellenberg2016} (their $\operatorname{CHur}^{c}_{G,n}$ is the topological space of the analytic topology of our 
 $(\cHur_{ G,c}^n)_{\bC}$ by \cite[Section 11.3]{Liu2019}, and we can easily check their non-splitting condition is satisfied here), 
there exist constants $C$ and $D$, depending on $G$, such that   
$\dim H^{k}((\cHur_{ G,c}^n)_{\bC},\Q)\leq CD^{k}$.
Thus
 we have 
 $$
|\#\cHur_{ G,c}^n(\F_q)-z_nq^n|\leq \sum_{j=0}^{2n-1} q^{j/2} C D^{2n-j}.
$$
For $q\geq D^4$, we have 
$$
|\#\cHur_{ G,c}^n(\F_q)-z_nq^n|\leq \sum_{j=0}^{2n-1} C q^{n/2 + j/4} \leq \frac{2Cq^{n-1/4}}{1-q^{-1/4}}
$$ 
By the same argument, we have the same inequalities for $\cHur_{ \Gamma,\Gamma\setminus\{1\}}^n$
Summing over even $n\leq N$, we conclude that if $q,n\ra\infty$ in any way, we have 
$$
\frac{1}{\#E_\Gamma(n,q)}\sum_{K\in E_\Gamma(n,q) } \#\Sur_\Gamma (\Gal(K^P/K),H)  \ra [H:H^\Gamma]^{-1}.
$$
By \cite[Theorem 6.2]{Liu2019}, we see these are exactly the moments of the random $\Gamma$-group $X_P$ described above.  Thus applying Theorem~\ref{T:Sawin}, we conclude the result.
%
\end{proof}

The methods of the paper  \cite{Liu2019} can find the moments of the maximal unramified extension of a random $\Gamma$ extension $K/\F_q(t)$ even when we allow parts not prime to $q-1$, but the obstruction to proceeding is that there is no candidate conjectural random group with those moments.  This brings us to the first case in this story when there was not an already known conjectural distribution that one was trying to show some distributions from number theory converged to.  So we naturally turn to the existence and construction aspects of the moment problem.  

All of the questions on moment problems for random groups discussed above have been reducible to questions of a countable list of linear equations in 
a countable number of variables, and whether they have a unique solution. 
The equations and variables are parametrized by groups, and the coefficients are given by group theoretic quantities (numbers of surjective homomorphisms).
 In Theorem~\ref{T:only2}, we made these equations quite explicit, and inverted the implicit infinite matrices by truncating them to finite matrices that we could explicitly invert.  This is an approach that works well when the groups involved are $\F_p$-vector spaces, but it becomes less and less tractable as the 
 groups get more complicated.  For finite abelian groups, one relies on the classification of the groups and the ability to write a formula for the number of surjections from one to another.   For non-abelian groups, there is no reasonable formulaic parametrization of the groups and their numbers of surjections.
Theorem~\ref{T:Sawin} is proved by a localization process that reduces the question to one only involving a smaller list of groups that be classified and for which the number of surjections can be simply expressed.
 
 All of these proofs of uniqueness, at least in principle, give some expression for the (only possible) solutions to these systems of equations.
What then remains of the existence question?   (1) The solutions must be non-negative in order to describe a measure.  (2) The determined values must further be shown to satisfy the equations.
 (3) In some cases, the solutions must be compatible in order to describe a measure.

We elaborate a bit on what these remaining problems are like.
First we consider (1).  In Theorem~\ref{T:only2}, we find an expression for $x_0$, the probability of the trivial group, as
$$\lim_{N\ra\infty} \sum_{j=1}^{N} (V^{-1})_{1,j} M_{j-2},$$
where the $M_j$ are the given moments, and the coefficients of the inverse matrix are given explicitly in  \eqref{E:invcoeff}.
The other $x_i$ are given similarly, with modified values of $M_j$.  It is not clear whether one should expect a simple criterion for whether these values are non-negative, but it seems conceivable that 
for a particular nice family of $M_j$ of interest that one could, with work, prove the values of the $x_i$ that are determined are indeed positive.  Addressing (2), one could hope  to prove for sufficiently bounded moments that these determined values satisfied the equations.  We cannot see problem (3) above when the random groups are just $\F_p$-vector spaces, but even in the case of finite abelian $p$-groups, some approaches prove that the distribution on groups mod $p$ is determined, and then that the distribution on groups mod $p^2$ is determined, etc.  One can see this feature explicitly in the statement of Theorem~\ref{T:Sawin}.  So, in such cases, to prove existence, one would have to check that the determined values were compatible and could be pieced together into a probability distribution.

The \emph{construction} problem, which we have described above as giving \emph{useful} formulas for the distribution, now turns on what useful means.  The formulas for the distributions that arise from the uniqueness proofs above are generally infinite sums.
One might not expect to solve this for general moments, but perhaps only for specific moments that arise in particular problems.
  We propose as one test of usefulness---can one detect if the distribution assigns value 0 to any particular basic open set?
Note that the distributions on finite abelian groups we have seen above in Theorems~\ref{T:FK} and \ref{T:EVW} and in \eqref{E:PR} all have this property.  
The distributions on non-abelian groups we have discussed, including those of Boston, Bush, and Hajir, and Liu, Zureick-Brown, and the author also have this property (see \cite[Lemma 4.8]{Boston2017}, \cite[Theorem 4.12]{Liu2019}).
    Other tests for usefulness may come from the features of the desired application, but we emphasize that there can be a significant gap between having a formula for a distribution as an infinite sum, and being able to use that formula in practice to answer questions about the distribution.  

We mention briefly forthcoming work of Sawin and the author on the moment problem for profinite groups.  This work will strengthen Theorem~\ref{T:Sawin} so that that larger growing moments $M_H$ are allowed, up to the point where the statement is no longer true (e.g. because of the example ~\eqref{E:PR}).  We also prove a general existence result addressing the problems (2) and (3) mentioned above.
Our first applications are to problems where moments are known but the distribution is not known.  The first of these applications is mentioned above, and is for the distribution of class groups or their non-abelian analogs, or order not prime to roots of unity in the base field $\F_q(t)$.  The second is to the distribution of the profinite completion of random $3$-manifolds (from random Heegaard splittings), as introduced by Dunfield and Thurston \cite{Dunfield2006}.  In these applications, we also solve the construction problem, e.g. we can describe explicitly the support of the limiting distribution, and we can use our formulas for the limiting distribution to answer open questions about the distributions from number theory and topology.  Moreover, the 3-manifold application requires addressing situations where uniqueness does not actually hold, and we recover uniqueness with additional parity hypotheses, such as in Theorems~\ref{T:HB},  \ref{T:only2}, and  \ref{T:Hoimom} above.


\section{Universality}\label{S:Universality}
A central concept in probability theory is that of \emph{universality}, which describes the ubiquitous phenomena that many input independent distributions can be combined to make an output distribution, and  
as the number of input distributions goes to infinity, the output distribution because quite insensitive to the input distributions.
The first and most well-known example is the Central Limit Theorem.

\begin{theorem}[Central Limit Theorem]
Let $X_1,X_2,\dots$ be independent, identically distributed random real numbers with finite mean $\mu=\E(X_i)$ and finite variance $\sigma^2$.  Then as $n\ra \infty,$
$$
\sqrt{n} \left(\frac{X_1+\cdots+X_n}{n}-\mu \right) 
$$
converge in distribution to the normal distribution with mean $0$ and variance $\sigma^2$.  
\end{theorem}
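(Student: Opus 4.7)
The plan is to use the method of characteristic functions, which is the standard route and naturally uses the moment problem philosophy that pervades the paper: the finite-order moments $\mu$ and $\sigma^2$ determine the limit distribution, and we detect this via a clever transform. First I would reduce to the case $\mu = 0$ and $\sigma^2 = 1$ by replacing $X_i$ with $(X_i - \mu)/\sigma$; this preserves independence and identical distribution, and the statement for the rescaled variables becomes that $S_n/\sqrt{n} := (X_1 + \cdots + X_n)/\sqrt{n}$ converges in distribution to the standard normal.

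The core computation is with the characteristic function $\phi(t) = \E(e^{itX_1})$. I would use two properties: independence gives $\phi_{S_n}(t) = \phi(t)^n$, and scaling gives $\phi_{S_n/\sqrt{n}}(t) = \phi(t/\sqrt{n})^n$. The key analytic input is a Taylor expansion of $\phi$ at $0$ using only the finite second moment: writing $\phi(t) = 1 + it \E(X_1) - \tfrac{t^2}{2} \E(X_1^2) + o(t^2) = 1 - \tfrac{t^2}{2} + o(t^2)$ as $t \to 0$. This expansion with only two finite moments is the delicate step; it requires a careful argument (e.g., bounding $|e^{itx} - 1 - itx + t^2 x^2/2|$ by $\min(|tx|^2, |tx|^3)$ and applying dominated convergence to $\E$ of this quantity divided by $t^2$). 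Plugging in $t/\sqrt{n}$ and taking the $n$-th power then gives
\[
\phi_{S_n/\sqrt{n}}(t) = \left(1 - \frac{t^2}{2n} + o(n^{-1})\right)^n \longrightarrow e^{-t^2/2}
\]
for each fixed $t \in \R$, which is the characteristic function of the standard normal.

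To conclude weak convergence from pointwise convergence of characteristic functions I would invoke Lévy's continuity theorem, which says that if $\phi_{Y_n}(t) \to \phi(t)$ pointwise and $\phi$ is continuous at $0$, then $Y_n$ converges in distribution to a random variable with characteristic function $\phi$. Since $e^{-t^2/2}$ is continuous at $0$ and corresponds to the $N(0,1)$ distribution, this finishes the proof (and undoing the normalization scales the variance back to $\sigma^2$).

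The main obstacle is the Taylor expansion step with only two finite moments, since one cannot naively differentiate $\phi$ three times. This is overcome by the truncation-and-dominated-convergence argument indicated above, which is the one place where the hypothesis $\E(X_i^2) < \infty$ is truly used. Everything else is essentially algebraic manipulation and an appeal to Lévy's theorem, both of which are standard.
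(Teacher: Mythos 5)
The paper does not prove the Central Limit Theorem; it is quoted as a classical, foundational result to motivate the universality discussion, and no argument is given. Your proposal is a correct sketch of the standard proof via characteristic functions and L\'evy's continuity theorem, with the key technical point (the second-order Taylor expansion of $\phi$ at $0$ using only finiteness of the second moment, via the bound $|e^{iy}-1-iy+y^2/2|\leq \min(|y|^2,|y|^3/6)$ and dominated convergence) correctly identified. The renormalization at the end is also sound: if $Y_i=(X_i-\mu)/\sigma$ then $\sqrt{n}\bigl((X_1+\cdots+X_n)/n-\mu\bigr)=\sigma\cdot (Y_1+\cdots+Y_n)/\sqrt{n}$, and multiplying a sequence converging in distribution to $N(0,1)$ by the constant $\sigma$ yields convergence to $N(0,\sigma^2)$. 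There is no argument in the paper to compare against, but your proof is correct and is the standard one.
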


Here the $X_i$ are the input distributions, and their normalized sum is the output distribution, and we see that the output, asymptotically, only depends on the variance of the input distributions.
The Central Limit Theorem is the tip of the iceberg, and probability theory is filled with further examples of this kind of phenomenon.

Here we discuss a somewhat newer line of inquiry:  universality for random groups.  In this case, the output distribution should be a random group, and the random group is somehow built out of the input distributions.    One natural way to obtain such a random group is to start with a fixed random group $F$ and take the quotient by random elements of $F$ that we call relators.  
If $F$ is a free abelian group, $F=\Z^n$, and we collect $m$ random relators as the columns of a matrix $M$, then the quotient of $F$ by our relators is the cokernel $\cok M$ (by definition of the cokernel).
This shows that questions about random abelian groups built in this way can be rephrased as questions about cokernels of random integral matrices.  

\subsection{Random finite abelian groups}
The simplest sort of groups to consider, as in our discussion above on the moment problem, are $\F_p$-vector spaces.  Let $F=\F_p^n$.  If $M$ is an $n\times m$ matrix with coefficients in $\F_p$, then the quotient of $F$ by the columns of $M$, i.e. $\cok M$, has rank equal to $n-\operatorname{rank} M=\operatorname{corank} M.$  Hence we translate questions about random $\F_p$-vector spaces into questions about ranks of random matrices over $\F_p$.    We note here that determining the rank distribution of random matrices over $\F_p$ is a simple exercise if the matrices are uniformly  distributed.  The entire interest here is when the matrix coefficients (still independent) are drawn from a wide range of distributions, and in particular if there is a resulting universality in the distribution of the ranks. 
 There is a long history of work on this question.
Kozlov \cite{Kozlov1966}  showed a universality result for the ranks over $\F_2$, and Kovalenko and Levitskaja \cite{Kovalenko1975} showed a version over $\F_p$.
Both works require that the matrix entries take all possible values with positive probability.  
Charlap, Rees, and Robbins \cite{Charlap1990} only determined the probability that a square matrix is invertible, but allowed more general matrix entries.
Balakin \cite{Balakin1968}, Bl\"omer, Karp, and Welzl \cite{Blomer1997}, and Cooper \cite{Cooper2000a} determined the ranks for sparser matrices,  with entries uniformly distributed over non-zero values.  
  The most general result we know is the following result of Nguyen and the author.

\begin{theorem}[Corollary of {\cite[Theorem 4.1]{Nguyen2021}}]\label{T:NWmodp}
Let $p$ be a prime.
Let $u$ be a non-negative integer and $\alpha_n$ a function of integers $n$ such that for any constant $\Delta>0$, for $n$ sufficiently large we have
$\alpha_n\geq \Delta (\log n)/n$.  For every positive integer $n$,
let $M_n$  be a random $n \times(n+u)$ matrix with independent entries $\xi_{i,j,n}\in \F_p$ that satisfy
$$
\max_{a\in \F_p} \Prob(\xi_{i,j,n}=a) \leq 1-\alpha_n
$$ for every $i,j,n$.  
Then for every $r\geq 0$,
$$
\lim_{n\ra\infty} \Prob(\cok M_n\isom \F_p^r)=\lim_{n\ra\infty} \Prob(\operatorname{rank} M_n=n-r)=p^{-r(r+u)}\frac{\prod_{j=r+u+1}^\infty (1-p^{-j})}{\prod_{j=1}^r (1-p^{-j})}.
$$
\end{theorem}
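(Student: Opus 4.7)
The plan is to prove this by the moment method, reducing the problem to computing the Sur-moments of $\cok M_n$ and then applying a robust uniqueness theorem. Specifically, I will show that for each fixed $k \geq 0$,
$$\lim_{n\to\infty} \E(\#\Sur(\cok M_n, \F_p^k)) = p^{-uk},$$
which are exactly the Sur-moments of the target distribution on the right-hand side. Since these moments satisfy $p^{-uk} = O(|\wedge^2 \F_p^k|) = O(p^{k(k-1)/2})$, a mild extension of Theorem~\ref{T:FK} (or directly Theorem~\ref{T:Wmom}) will then yield the claimed convergence in distribution.

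First I would rewrite the Sur-moment in a tractable form. A surjection $\cok M_n \twoheadrightarrow \F_p^k$ is the same data as a surjection $F : \F_p^n \twoheadrightarrow \F_p^k$ that vanishes on every column of $M_n$. Since the columns $v_1,\dots,v_{n+u}$ of $M_n$ are independent random vectors in $\F_p^n$ (each with independent coordinates),
$$\E(\#\Sur(\cok M_n, \F_p^k)) \;=\; \sum_{F \in \Sur(\F_p^n,\F_p^k)} \prod_{j=1}^{n+u} \Prob(F(v_j) = 0).$$
The number of summands is $\#\Sur(\F_p^n,\F_p^k) = \prod_{i=0}^{k-1}(p^n - p^i) \sim p^{nk}$, and for a ``generic'' $F$ one expects $\Prob(F(v_j)=0) \approx p^{-k}$, producing the heuristic value $p^{nk}\cdot p^{-(n+u)k} = p^{-uk}$.

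The main obstacle is to control the contribution from ``structured'' $F$ for which $\Prob(F(v_j)=0)$ is noticeably larger than $p^{-k}$; this is exactly the content borrowed from \cite[Theorem 4.1]{Nguyen2021}. The idea is to partition $\Sur(\F_p^n,\F_p^k)$ according to an appropriate measure of the arithmetic structure of the rows of $F$ (viewed as vectors in $\F_p^n$). For $F$ whose rows are jointly ``non-degenerate'' in the Littlewood--Offord sense, a Fourier expansion in $\F_p^k$ gives $\Prob(F(v_j)=0) = p^{-k}(1+o(1))$ uniformly, and the main term of the sum matches the heuristic. For the ``structured'' $F$, an inverse Littlewood--Offord theorem forces $F$ to factor through a small-codimension subspace (or to have its rows confined to a sparse algebraic structure), so that the number of such $F$ is small; the sparsity hypothesis $\alpha_n \geq \Delta(\log n)/n$ (for every constant $\Delta > 0$) is precisely what is needed for the resulting counting and concentration bounds to beat the growth of the moment. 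I expect this inverse Littlewood--Offord step to be the crux, since the uniform case is essentially trivial and everything else is an accounting of corrections.

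Once $\E(\#\Sur(\cok M_n,\F_p^k)) \to p^{-uk}$ is established, I conclude as follows. The $u$-shifted Cohen--Lenstra distribution on $\F_p$-vector spaces, namely
$$\Prob(Y \isom \F_p^r) = p^{-r(r+u)} \frac{\prod_{j=r+u+1}^\infty (1-p^{-j})}{\prod_{j=1}^r (1-p^{-j})},$$
has these exact Sur-moments $p^{-uk}$, as can be verified from a direct computation of $\sum_r \Prob(Y\isom \F_p^r)\#\Sur(\F_p^r,\F_p^k)$. The moment bound $p^{-uk} = O(p^{k(k-1)/2})$ puts us in the regime where Theorem~\ref{T:Wmom} (with $P = \{p\}$, $\mathcal{A}$ the finite $p$-groups restricted to $\F_p$-vector spaces) applies and yields weak convergence in distribution $\cok M_n \Rightarrow Y$, which is the stated conclusion $\Prob(\cok M_n \isom \F_p^r) \to \Prob(Y \isom \F_p^r)$ for each $r \geq 0$.
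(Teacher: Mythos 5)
Your route is genuinely different from the paper's, and in particular you never actually use \cite[Theorem 4.1]{Nguyen2021} as a black box, which is what the ``Corollary of\dots'' label calls for. The paper's own proof is very short: regard the $\F_p$-entries as elements of $\Z_p$ (so the hypotheses of Theorem~\ref{T:NWpadic} are satisfied), apply Theorem~\ref{T:NWpadic} (which \emph{is} Theorem 4.1 of \cite{Nguyen2021}) to obtain the limiting distribution of the $\Z_p$-cokernel as a finite abelian $p$-group, observe that $\dim_{\F_p}\cok(M_n\bmod p)$ equals the minimal number of $\Z_p$-generators of $\cok M_n$, and finally sum the limiting probabilities of Theorem~\ref{T:NWpadic} over all $p$-groups of fixed rank $r$ (Fatou's lemma, justified because the limiting probabilities sum to $1$ by \cite[Lemma 3.2]{Wood2019a}, and the resulting sum is computed in \cite[Corollary 6.5]{Cohen1984}). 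You instead propose to compute $\E(\#\Sur(\cok M_n,\F_p^k))\to p^{-uk}$ directly over $\F_p$ and then close via the moment-uniqueness theorem (Theorem~\ref{T:Wmom}). Your endgame is correct: the target distribution does have Sur-moments $p^{-uk}$, these satisfy the $O(|\wedge^2 A|)$ growth bound, and Theorem~\ref{T:Wmom} applies. But the moment computation you sketch (partitioning surjections by arithmetic structure, inverse Littlewood--Offord, Fourier expansion to handle sparsity down to $(\log n)/n$) is precisely the technical engine inside the \emph{proof} of Theorem 4.1 of \cite{Nguyen2021}, not its statement; your remark that this step is ``exactly the content borrowed from \cite[Theorem 4.1]{Nguyen2021}'' is imprecise, since Theorem 4.1 is a distributional statement, not a moment bound, and there is nothing in your argument that actually consumes it. In short, you would be re-deriving the result from scratch rather than deducing it as a corollary; the paper's deduction is a two-step application of Theorem~\ref{T:NWpadic} plus Fatou and requires none of the Littlewood--Offord machinery at the level of this corollary.
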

We see that there  are separate universality classes for different $u$, i.e. different numbers of relations compared to the number of generators, but for fixed $u$ a wide range of entry distributions all give random groups in the same universality class.
Note that Theorem~\ref{T:NWmodp} does not require the matrix entries to be identically distributed.  It also allows the matrices to be quite sparse.
If $\Prob(\xi_{i,j,n}=0)=1-(\log n)/n$ the matrix would have a row of all zeroes with (asymptotically) positive probability, and this crosses a threshold for the behavior of the random matrix, similar to the well-known threshold for the behavior of random graphs and sparse random matrices in other contexts.

\begin{oproblem}
Lower the bound on $\alpha_n$ in Theorem~\ref{T:NWmodp}, as close to the $(\log n)/n$ threshold as possible (and similarly for Theorems~\ref{T:NWpadic} and \ref{T:NWmain} below).
\end{oproblem}

We next consider finite abelian $p$-groups, and now $F=\Z_p^n$ (and $\Z_p$ are the $p$-adic integers).    If we form a random group by taking $n+u$ random relators, then the group is $\cok M$, where 
$M$ is the matrix whose columns are the relations.  Indeed, Theorem~\ref{T:NWmodp} is actually a corollary of the following.
\begin{theorem}[{\cite[Theorem 4.1]{Nguyen2021}}]\label{T:NWpadic}
Let $p$ be a prime.
Let $u$ be a non-negative integer and $\alpha_n$ a function of integers $n$ such that for any constant $\Delta>0$, for $n$ sufficiently large we have
$\alpha_n\geq \Delta (\log n)/n$.  For every positive integer $n$,
let $M_n$  be a $n \times(n+u)$ matrix with independent entries $\xi_{i,j,n}\in \Z_p$ that satisfy
$$
\max_{a\in \F_p} \Prob(\xi_{i,j,n}\equiv a \pmod{p}) \leq 1-\alpha_n
$$ for every $i,j,n$.  
Then for every abelian $p$-group $A$, we have
\begin{align*}
\lim_{n\ra\infty} \P(\cok(M_n)\isom A) 
&= \frac{1}{|A|^u|\Aut(A)|}
\prod_{k=1}^\infty (1-p^{-k-u}).
\end{align*}
\end{theorem}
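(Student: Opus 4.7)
The natural strategy is to apply the robust moment uniqueness result Theorem~\ref{T:Wmom}. Let $Y_u$ denote the explicit distribution on finite abelian $p$-groups claimed in the theorem statement; a standard Cohen--Lenstra type identity gives $\E(\#\Sur(Y_u, A)) = |A|^{-u}$ for every finite abelian $p$-group $A$, which is trivially $O(|\wedge^2 A|)$. Hence Theorem~\ref{T:Wmom} (applied with $P=\{p\}$) reduces the problem to establishing the moment asymptotic
$$\lim_{n\to\infty} \E(\#\Sur(\cok M_n, A)) = |A|^{-u}$$
for every finite abelian $p$-group $A$.

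To unfold this moment, observe that a surjection $\cok M_n \twoheadrightarrow A$ is the same datum as a surjection $\phi\colon \Z_p^n \twoheadrightarrow A$ whose kernel contains every column of $M_n$. Identifying $\phi$ with its value tuple $\mathbf{a}=(a_1,\dots,a_n)\in A^n$ and using independence of the $n+u$ columns, one obtains
$$\E(\#\Sur(\cok M_n, A)) = \sum_{\mathbf{a}\in \Sur(\Z_p^n, A)} \Prob_{\xi}\!\Big(\sum_{i=1}^{n}\xi_i a_i = 0 \text{ in } A\Big)^{n+u},$$
where $\xi=(\xi_1,\dots,\xi_n)$ has the distribution of one column of $M_n$. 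The heuristic limit comes from pretending the inner probability $p_{\mathbf{a}}$ equals $|A|^{-1}$ for every generating $\mathbf{a}$: the sum would reduce to $|\Sur(\Z_p^n,A)|\cdot|A|^{-(n+u)}$, which tends to $|A|^{-u}$ because the density of generating $n$-tuples in $A^n$ approaches $1$ as $n\to\infty$.

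The main obstacle is therefore an anti-concentration estimate showing that $p_{\mathbf{a}}$ is genuinely close to $|A|^{-1}$ for essentially all $\mathbf{a}$. I would partition the sum according to the ``spread'' of $\mathbf{a}$: for generic $\mathbf{a}$, Fourier analysis on $A$ combined with a Hal\'asz-type inequality yields a bound $p_{\mathbf{a}}\le |A|^{-1}+e_{\mathbf{a}}$, where the hypothesis $\max_a\Prob(\xi_{i,j,n}\equiv a\pmod p)\le 1-\alpha_n$ prevents any $\xi_i$ from concentrating on a single residue class mod $p$ and thereby controls $e_{\mathbf{a}}$. The delicate part is bounding the contribution of structured tuples---those with many coordinates in a fixed proper subgroup of $A$, or more subtly arranged to inflate $p_{\mathbf{a}}$; an inverse Littlewood--Offord theorem for finite abelian $p$-groups supplies the required dichotomy, and the sparsity threshold $\alpha_n\gtrsim (\log n)/n$ enters precisely to ensure that each column has enough non-trivial entries to avoid, with overwhelming probability, being concentrated on a proper subgroup (the threshold being essentially sharp because at $(\log n)/n$ one begins to see entirely zero rows with positive probability). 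Once the structured contribution is shown to be $o(1)$ and the generic sum evaluated, the moment limit $|A|^{-u}$ follows, and Theorem~\ref{T:Wmom} delivers the weak convergence of $\cok M_n$ to $Y_u$, whose explicit mass function is the standard $u$-shifted Cohen--Lenstra formula displayed in the statement.
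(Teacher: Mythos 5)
Your plan follows the same strategy that underlies the cited result: compute the asymptotic moments $\lim_{n\to\infty}\E(\#\Sur(\cok M_n, A))=|A|^{-u}$ by expanding over surjections $\Z_p^n\twoheadrightarrow A$, control the per-column hitting probabilities via Fourier/anti-concentration analysis with an inverse Littlewood--Offord input for the sparse regime, and then invoke a robust moment-uniqueness theorem (Theorem~\ref{T:Wmom}) to conclude convergence in distribution to the explicit $u$-shifted Cohen--Lenstra measure. Two small imprecisions worth fixing: the theorem permits the entries $\xi_{i,j,n}$ to vary with $j$, so the inner factor should be a product $\prod_{j=1}^{n+u}\Prob(\sum_i\xi_{i,j,n}a_i=0)$ rather than a single probability raised to the $(n+u)$-th power; and since $\cok M_n$ can be infinite (the hypotheses only constrain the entries mod $p$), one must either first show this happens with vanishing probability or otherwise adapt Theorem~\ref{T:Wmom}, whose stated form assumes the random groups lie in the class of finite abelian $p$-groups.
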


The proof of Theorem~\ref{T:NWpadic} builds heavily on the method in \cite{Wood2019a}, but extends the statement to include the sparse regime.

\begin{proof}[Proof of Theorem~\ref{T:NWmodp}]
The probabilities in Theorem~\ref{T:NWpadic}  sum over $A$ to $1$ to give a probability distribution for each $u$ \cite[Lemma 3.2]{Wood2019a}.   Thus it follows from Fatou's Lemma that we can simply add up the probabilities from Theorem~\ref{T:NWpadic} for groups of rank $r$  to obtain the limiting probabilities in Theorem~\ref{T:NWmodp}.
This is done in \cite[Corollary 6.5]{Cohen1984}.
\end{proof}
  
When $u=0$, the distribution in Theorem~\ref{T:NWpadic} is the Cohen-Lenstra distribution of $X_{CL}$ we have mentioned above, and when $u=1$ it is the  distribution conjectured by Cohen and Lenstra 
\cite{{Cohen1984}}
for the Sylow $p$-subgroups  of class groups of real random quadratic fields (for $p$ odd).
  Let us now put these class groups in the context of random matrices, following Venkatesh and Ellenberg \cite[Section 4.1]{Venkatesh2010}.  
Let $K=\Q(\sqrt{D})$ for some negative (resp., positive) square-free integer $D$, and $S$ be any finite set of primes of $K$ that generate $\operatorname{Cl}(K)$.  
We write $\cO_S^*$ for the $S$-units in the integers $\cO_K$, and $I_K^{S}$ for the abelian group of fractional ideals generated by the elements of $S$.
Then 
\begin{equation}\label{E:casm}
\operatorname{Cl}(K)=\cok( \cO_S^* \ra I_K^{S} ),
\end{equation}
where the map takes $\alpha$ to the ideal $(\alpha)$.
So the Sylow $p$-subgroup of $\operatorname{Cl}(K)$ is $\cok( \cO_S^* \tensor_\Z \Z_p \ra I_K^{S} \tensor_\Z \Z_p)$.
Since  $I_K^{S}$ and $\cO_S^*$ are both abelian groups of rank $|S|$ (resp. of ranks $|S|$ and $|S|+1$), we have written 
the Sylow $p$-subgroup of $\operatorname{Cl}(K)$ as a cokernel of a $p$-adic $n\times n$ matrix $R_D$ (resp.  $n\times(n+1)$ matrix). 
One can now view the Cohen-Lenstra conjecture for class groups of quadratic fields as asking whether universality of Theorem~\ref{T:NWpadic} extends to the random matrix $R_D$ for random $D$.
This point of view was a motivation for the paper \cite{Wood2019a}.

Now we consider random finite abelian groups more generally.  For a finite set $P$ of primes, considering finite abelian $P$-groups turns out to be only notationally more challenging than considering abelian $p$-groups, and indeed  \cite[Theorem 4.1]{Nguyen2021} is proven in this slightly more general context.
However, considering all primes at once is quite a bit more of a challenge, because there will always be primes large compared to $n$.  Nguyen and the author develop a method to handle large primes (compared to $n$) and we prove the following.

\begin{theorem}[{\cite[Theorem 1.1]{Nguyen2021}}]\label{T:NWmain}
For integers $n,u\geq 0$, let $M_{n \times (n+u)}$ be an integral $n \times (n+u)$ matrix with entries 
 i.i.d. copies of a random integer $\xi_n$,  such that for every prime $p$ we have
$$
\max_{a\in \F_p} \Prob(\xi_{n}\equiv a \pmod{p}) \leq 1-n^{-1+\epsilon}
$$ 
 and  $|\xi_n|\leq n^T$ for any fixed parameters $0<\epsilon<1$ and $T>0$ not depending on $n$. 
For any fixed finite abelian group $A$ and $u\geq 0$,
\begin{equation}\label{eqn:sur:fix}
\lim_{n\to \infty}  \P\Big(\cok(M_{ n \times (n+u)}) \simeq A \Big)  = \frac{1}{|A|^u |\Aut(A)|}  \prod_{k=u+1}^\infty \zeta(k)^{-1},
\end{equation}
where $\zeta(s)$ is the Riemann zeta function.
\end{theorem}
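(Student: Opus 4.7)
The plan is to combine the moment method with the robust uniqueness result Theorem~\ref{T:Wmom} and a tightness argument that rules out the cokernel acquiring unexpectedly large prime divisors. Concretely, I will establish two independent inputs: (i) convergence of $\E[\#\Sur(\cok M_n, A)]$ to $|A|^{-u}$ for every finite abelian group $A$; and (ii) tightness of the set of primes dividing $|\cok M_n|$. Theorem~\ref{T:Wmom} (applied with the set $P$ of primes dividing $|A|$) turns (i) into convergence of the distribution of the Sylow-$P$-part of $\cok M_n$, and (ii) then promotes this to convergence of $\Prob(\cok M_n \simeq A)$ to the claimed limit.

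For (i), expand
\[
\E\!\big[\#\Sur(\cok M_n, A)\big] = \sum_{\phi \in \Sur(\Z^n, A)} \Prob(\phi M_n = 0).
\]
The event $\phi M_n = 0$ says each of the $n+u$ independent columns of $M_n$ lies in the index-$|A|$ subgroup $\ker\phi\subset \Z^n$, so by independence of the entries the probability factors as $\prod_{j=1}^{n+u} \Prob\big(\sum_{i=1}^n \phi(e_i)\,\xi_{n,i,j} = 0 \text{ in } A\big)$. The anti-concentration hypothesis, applied to each prime $p\mid|A|$, yields $|\E[\chi(\xi_n)]| \leq 1 - c_A\, n^{-1+\epsilon}$ for every nontrivial character $\chi$ of $A$; standard Fourier analysis on $A$ (in the style of \cite[\S3--4]{Nguyen2021} and \cite{Wood2019a}) then shows each column probability equals $|A|^{-1}(1+o(1))$. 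Combined with $\#\Sur(\Z^n, A) = |A|^n(1+o(1))$, summing over $\phi$ yields $|A|^n\cdot |A|^{-(n+u)} = |A|^{-u}$.

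With (i) in hand, the bound $|A|^{-u} = O(|\wedge^2 A|)$ is trivial, so Theorem~\ref{T:Wmom} applies to $(\cok M_n)_P$ for any finite set of primes $P$ and delivers
\[
\lim_{n\to\infty} \Prob\!\big((\cok M_n)_P \simeq B\big) = \prod_{p \in P} \frac{1}{|B_p|^u |\Aut B_p|} \prod_{k=u+1}^\infty (1 - p^{-k})
\]
for every finite abelian $P$-group $B$, matching the Sylow-$P$-factor of the right side of \eqref{eqn:sur:fix}.

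The main obstacle, and what genuinely goes beyond Theorem~\ref{T:NWpadic}, is input (ii): given a finite abelian $A$ and $\epsilon > 0$, producing a finite $P \supseteq \{p : p \mid |A|\}$ with $\limsup_n \Prob(\exists\, p \notin P : p \mid |\cok M_n|) < \epsilon$. I would split the primes into three ranges. For $p > (n^{T+1/2})^n$, Hadamard's inequality applied to each $n\times n$ minor of $M_n$ makes $p \mid |\cok M_n|$ impossible deterministically, using $|\xi_n| \leq n^T$. For primes in the intermediate range, the core technical step is a Hal\'asz/Littlewood--Offord-type anti-concentration inequality, uniform in $p$, showing
\[
\Prob\!\big(p \mid |\cok M_n|\big) \leq C\, p^{-(u+1)} + o_n(1);
\]
this uses the $n^{-1+\epsilon}$ hypothesis to control the $\F_p$-rank deficit of $M_n$, and is delicate precisely when $p$ is much larger than the number of distinct values of $\xi_n$, so that a single matrix entry is far from uniform mod $p$. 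For $u \geq 1$, the series $\sum_{p > N} p^{-(u+1)}$ converges and tends to $0$ as $N \to \infty$, giving tightness and concluding the theorem. The case $u = 0$ is special: the right side of \eqref{eqn:sur:fix} vanishes identically, and one argues separately that for any fixed $A$ and any finite $Q$ of primes disjoint from those of $|A|$, $\Prob(\cok M_n \simeq A) \leq \Prob((\cok M_n)_Q = 1)$, whose limit (by the Theorem~\ref{T:Wmom} output for $P \cup Q$) is $\prod_{q \in Q}\prod_{k \geq 1}(1 - q^{-k})$, and this tends to $0$ as $Q$ grows since $\prod_q (1 - q^{-1}) = 0$.
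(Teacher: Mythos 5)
The paper under review states this result purely as a citation of \cite[Theorem 1.1]{Nguyen2021} and gives no proof of its own, so the comparison is to that reference. Your overall architecture — establish the $\Sur$-moments, feed them prime-by-prime into Theorem~\ref{T:Wmom}, prove a separate tightness statement to rule out stray large prime factors of $|\cok M_n|$, and handle $u=0$ by monotonicity — is the right skeleton, and the Hadamard cutoff at the top of the prime range and the $u=0$ trick via $\Prob((\cok M_n)_Q=1)\to\prod_{q\in Q}\prod_{k\geq1}(1-q^{-k})\to 0$ are both sound. But two of the steps you present as routine are precisely the content of the cited paper, and one is stated incorrectly.

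In step (i) you claim that ``standard Fourier analysis on $A$ \ldots shows each column probability equals $|A|^{-1}(1+o(1))$,'' uniformly over $\phi\in\Sur(\Z^n,A)$, and then multiply by $\#\Sur(\Z^n,A)\sim|A|^n$. This per-$\phi$ estimate is simply false. Take $A=\Z/p\Z$ and $\phi$ supported on a single coordinate, $\phi(e_1)=1$, $\phi(e_i)=0$ for $i>1$: the column probability is $\Prob(\xi_n\equiv 0\pmod p)$, which the hypothesis only forces to be $\leq 1-n^{-1+\epsilon}$ — nowhere near $p^{-1}(1+o(1))$. What is true is that the \emph{average} over $\phi$ converges to $|A|^{-(n+u)}$, and proving this requires partitioning $\Sur(\Z^n,A)$ by structural data (the ``codes''/``depth'' of \cite{Wood2019a}, sharpened in \cite{Nguyen2021} to the sparse $n^{-1+\epsilon}$ regime) and showing the concentrated $\phi$ contribute negligibly in aggregate. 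That decomposition is a substantial piece of work, not a corollary of Fourier inversion.

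In step (ii), the bound $\Prob(p\mid|\cok M_n|)\leq Cp^{-(u+1)}+o_n(1)$, with enough uniformity in $p$ that the error can be summed over the $\Theta(n\log q_{\max})$-many intermediate primes between a fixed $N$ and the Hadamard threshold, is exactly the new technical contribution of \cite{Nguyen2021} beyond Theorem~\ref{T:NWpadic}. As written your $o_n(1)$ is not visibly summable across that range, and invoking ``a Hal\'asz/Littlewood--Offord-type anti-concentration inequality, uniform in $p$'' names the needed estimate without supplying it — this is where the hypotheses $|\xi_n|\leq n^T$ and $\max_a\Prob(\xi_n\equiv a\pmod p)\leq 1-n^{-1+\epsilon}$ earn their keep, and where the paper spends most of its effort. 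In short: the scaffolding is right, but the two load-bearing estimates are asserted rather than proved, and one of them is asserted in a form that is false.
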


Note this theorem has nice corollaries like the probability that a random map as in Theorem~\ref{T:NWmain} (for $u=1$) from $\Z^{n+1}\ra \Z^n$ is surjective is $\prod_{k=2}^\infty \zeta(k)^{-1}\approx 0.4358.$  As in the proof of Theorem~\ref{T:NWmodp}, one can obtain other probabilities as corollaries, such as (for $u\geq 1$) the probability that $\cok(M_{ n \times (n+u)})$ is cyclic.
However, when $u=0$, the probabilities in Theorem~\ref{T:NWmain} are all $0$ (from the $\zeta(1)^{-1}$ term), so this theorem tells us little about the distribution of random abelian groups from $n$ generators and $n$ random relations.  In \cite[Theorem 1.2]{Nguyen2021} we do find the probability that $\cok(M_{ n \times n})$ is cyclic, and in 
\cite[Theorem 2.4]{Nguyen2021} more generally give the probability that $\cok(M_{ n \times n})$ is any set of groups $\{A\times C | C \textrm{ cyclic, }p\nmid |C| \textrm{ for }1<p<Y \}$. 
However, we are not able to distinguish a factor of $\Z/p\Z$ for large $p$ from one of $\Z/p^2\Z$, for example.

\begin{oproblem}
Find $$\lim_{n\ra\infty} \Prob (|\cok M_{n\times n}| \textrm{ is square-free})=\lim_{n\ra\infty} \Prob (|\det M_{n\times n}| \textrm{ is square-free}).$$
\end{oproblem}
Note that finding the probability that a polynomial takes square-free values on even the nicest distributions of integers is difficult and generally open, but there has been some progress for certain
discriminant polynomials by Bhargava \cite{Bhargava2014b} and Bhargava, Shankar, and Wang \cite{Bhargava2016a}

\begin{oproblem}
Extend Theorem~\ref{T:NWmain} to non-identical entries.
\end{oproblem}

The first connection of the Cohen-Lenstra heuristics to random matrices came from work of Friedman and Washinton \cite{Friedman1989}.  They considered the  analog of the Cohen-Lenstra conjectures for quadratic extensions of $\F_q(t)$.  In this case one can also describe the Sylow $p$-subgroup of the class group of $K$ (or more precisely of the $\Pic^0$) as the cokernel of a certain random $2g\times 2g$ random matrix $I-F$ over $\Z_p$, where $I$ is the identity matrix, and $F$ describes  the action of Frobenius on the $p$-adic Tate module of the curve corresponding to $K$ \cite[Proposition 2]{Friedman1989}.  (Here $p$ is \emph{not} the characteristic of $\F_q$.)
Friedman and Washington showed that the cokernels of random matrices from the (additive) Haar measure on $n\times n$ matrices over $\Z_p$, as $n\ra\infty$, approach the Cohen-Lenstra distribution.
However, the matrix $F$ above is not just any matrix; since it acts on the Weil pairing by scaling the pairing by $q$, it lies in a generalized symplectic coset $\operatorname{GSp}^q_{2g}(\Z_p)$ 
($\operatorname{GSp}^q_{2g}(\Z_p)$ is the coset of matrices $M$ such that $M^tJM=qJ$, where $J$ is an invertible alternating matrix, and in particular the $M\in \operatorname{GSp}^q_{2g}(\Z_p)$ are invertible).  Friedman and Washington prove that the cokernels of random matrices $I-M$, where $M$ is random from the (multiplicative) Haar measure on $\GL_{2g}(\Z_p)$, as $g\ra\infty$, approach the Cohen-Lenstra distribution \cite[Section 4]{Friedman1989}.  
Eventually, it was understood that this also holds for $I-M$, where $M$ is random from the Haar induced measure on $\operatorname{GSp}^q_{2g}(\Z_p)$ and $gcd(q-1,p)=1$.  (This is not clearly stated in the literature, but follows from work of 
Achter \cite{Achter2008} and Ellenberg and Venkatesh \cite{Ellenberg2016} in a very round about way, as outlined by Garton \cite[p.153]{Garton2015}. )

We can view these results as additional examples of random matrices in the universality class of Theorem~\ref{T:NWpadic}, even though the matrices do not have independent entries, and also come from very special distributions.  Another example that would fit into this category is M\'{e}sz\'{a}ros's theorem \cite[Theorem 1]{Meszaros2020} that says that the Laplacians of uniform random $d$-regular directed graphs, for any $d\geq 3$, also have these limiting cokernel distributions.  
It is a very interesting problem to extend this universality to matrices with dependent entries but for broader classes of random matrices, where the degrees of freedom in choosing the distribution of random matrices is large.   As an example, in \cite[Theorem 1.6]{Nguyen2021}, we extend universality to Laplacians of random matrices with independent entries  (so matrices whose off-diagonal entries are independent and whose columns sum to $0$), which includes Laplacians of directed Erd\H{o}s-R\'enyi random graphs.  However, this is a very special kind of dependency among entries for which the methods are well-suited.

\begin{oproblem}
Extend Theorem~\ref{T:NWpadic} to more classes of matrices with dependent entries.  
\end{oproblem}

\begin{oproblem}
Give a unified proof that multiple special classes of random matrices are in the universality class of Theorem~\ref{T:NWpadic}.  
\end{oproblem}

\subsection{Random finite abelian groups with additional structure}

Of course, if the entries of the random matrices have too much dependence in some particular way, their cokernels may land in another universality class.  
For example, for symmetric matrices the author has proved the following.

\begin{theorem}[{\cite{Wood2017}}]\label{W:symmat}
Let $p$ be a prime and $0< \alpha<1$.
For every positive integer $n$,
let $M_n$  be a symmetric random $n\times n$ matrix  with independent entries $\xi_{i,j,n}\in \Z_p$ for $i\geq j$ that satisfy
$$
\max_{a\in \F_p} \Prob(\xi_{i,j,n}\equiv a \pmod{p}) \leq 1-\alpha
$$ for every $i,j,n$.  
Then for every abelian $P$-group $A$, we have
\begin{align*}
\lim_{n\ra\infty} \P(\cok(M_n)_P\isom A) 
&= \frac{\#\{\textrm{symmetric, bilinear, perfect }\phi:A\times A \ra \bC^*\}}{|A||\Aut(A)|}
\prod_{k=0}^\infty (1-p^{-2k-1}).
\end{align*}
\end{theorem}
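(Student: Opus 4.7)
The approach is via the moment method.  By Theorem~\ref{T:Wmom}, to prove weak convergence of $\cok(M_n)_P$ to the stated distribution, it suffices to establish that for every finite abelian $p$-group $A$, the Sur-moments $\E(\#\Sur(\cok(M_n),A))$ converge to the Sur-moments of the target distribution, provided these limiting moments are $O(|\wedge^2 A|)$.  A short computation (dualizing the formula in the theorem) shows that the Sur-moment of the target distribution at $A$ equals $|\wedge^2 A|$, so the hypothesis of Theorem~\ref{T:Wmom} is met and we are reduced to a moment calculation.

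The heart of the argument is to show that $\E(\#\Sur(\cok(M_n),A))\to|\wedge^2 A|$ for each fixed $A$.  The starting point is
$$\E(\#\Sur(\cok(M_n),A))=\sum_{\phi\in\Sur(\Z_p^n,A)}\Prob(\phi M_n=0),$$
and Fourier inversion on $A^n$ gives, writing $F_i:=\phi(e_i)\in A$,
$$\Prob(\phi M_n=0)=|A|^{-n}\sum_{\psi\in(A^*)^n}\E\Big[\prod_j\psi_j\Big(\sum_iF_iM_{ij,n}\Big)\Big].$$
Using the symmetry $M_{ij,n}=M_{ji,n}$ and independence of the upper-triangular entries, this inner expectation factors as a product over diagonal entries $\E[\psi_j(F_jM_{jj,n})]$ and above-diagonal entries $\E[\chi_{ij}(M_{ij,n})]$, with $\chi_{ij}(m)=(\psi_j(F_i)\psi_i(F_j))^m$ regarded as a character of $\Z_p$.

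The main term comes from those $(\phi,\psi)$ for which every $\chi_{ij}$ and every $\psi_j(F_j\,\cdot\,)$ is trivial; equivalently, viewing $\psi$ as a map $\Z_p^n\to A^*$, the bilinear form $B(v,w):=\psi(w)(\phi(v))$ is alternating on $\Z_p^n$.  Surjectivity of $\phi$ forces $\psi=\sigma\circ\phi$ for some $\sigma\in\Hom(A,A^*)$ corresponding to an alternating form on $A$, and these are counted by $|\wedge^2 A|$; combined with the standard count $|\Sur(\Z_p^n,A)|/|A|^n\to 1$, this yields the main contribution $\to|\wedge^2 A|$, exactly as required.

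The principal obstacle is to show that the $(\phi,\psi)$ outside the main stratum contribute $o(1)$.  The anti-concentration hypothesis $\max_a\Prob(\xi_{ij,n}\equiv a\!\!\pmod p)\le 1-\alpha$ gives $|\E[\chi(\xi_{ij,n})]|\le 1-\alpha'$ for any $\chi$ nontrivial mod $p$, so each ``bad'' character factor contributes uniform exponential decay; characters nontrivial only modulo higher powers of $p$ must be handled by a separate lifting reduction inside the filtration $A\supset pA\supset p^2A\supset\cdots$.  One then stratifies $(\phi,\psi)$ by the number of nontrivial $\chi_{ij}$ and diagonal characters and bounds the combinatorial count of pairs in each stratum against the decay factor.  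The new difficulty relative to the non-symmetric Theorem~\ref{T:NWpadic} is that the symmetry constraint couples above-diagonal entries with their transposed positions, so the relevant combinatorial counts must be organized by the alternating form structure on $\Z_p^n$ rather than by independent columns of $M_n$; this coupling is precisely what creates the extra factor $|\wedge^2 A|$ in the moment (versus the non-symmetric moment $1$) and is the source of the bulk of the technical work.
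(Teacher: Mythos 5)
Your proposal is correct and follows essentially the same strategy as the paper's proof: establish that $\E(\#\Sur(\cok(M_n),A))\to|\wedge^2 A|$ for each fixed $A$ (the content of \cite[Theorem 6.1]{Wood2017}, which you re-derive by the same Fourier-analytic decomposition into alternating-form main term plus anti-concentration error bounds), verify the target distribution has these same moments (which the paper handles by citing \cite[Theorem 2]{Clancy2015} and \cite[Corollary 9.2]{Wood2017} rather than by the ``short computation'' you allude to), and then apply the robust uniqueness result Theorem~\ref{T:Wmom}. The paper's proof is a one-line string of citations, and your sketch accurately fills in what those citations contain, including correctly identifying the main stratum with alternating forms on $\Z_p^n$ pulled back from $\wedge^2 A$ and noting that the error-term estimation over the remaining strata is where the real work lies.
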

(Note the number of pairings can be described explicitly in terms of the partition corresponding to the group $A$ \cite[{Equation (2)}]{Wood2017}.)

\begin{proof}
Theorem 6.1 in \cite{Wood2017} gives the moments, and then Theorem~\ref{T:Wmom} shows they determine a unique distribution, and \cite[Theorem 2]{Clancy2015} gives formulas for the 
distribution when $M_n$ is taken from Haar measure, as in \cite[Corollary 9.2]{Wood2017}.
\end{proof}

Nguyen and the author have forthcoming work in which we extend Theorem~\ref{W:symmat} to integer matrices (and all primes), analogous our results on $n\times n$ matrices over $\Z$ described above (including obtaining the probability that the cokernel is cyclic).   

One way of understanding why some random groups are in a different universality class is that the groups may be naturally coming with further structure than just group structure.  For example, the cokernel of a symmetric matrix over the integers (or $\Z_p$) \cite[Section 1.1]{Clancy2015} comes with a natural symmetric bilinear pairing.  Clancy, Leake, and Payne \cite{Clancy2015a} suggested that for random graphs, the cokernels of the graph Laplacian, along with their symmetric pairing, should be 
distributed proportionally to $|A|^{-1}|\Aut(A,\langle,\rangle)|^{-1}$.  If we sum these expressions over isomorphism classes of pairings for a fixed group, we exactly obtain the probabilities for groups in Theorem~\ref{W:symmat} (see \cite[Corollary 9.2]{Wood2017}).  This reflects an important part of the philosophy of the Cohen-Lenstra-Martinet heuristics---that the natural distributions on algebraic objects must take into account all of the structure of the objects.  For example, when considering class groups of Galois number fields with Galois group $G$, we consider the class group not just as a group but rather as a $G$-module, and the predicted probabilities for a particular $G$-module involve the number of automorphisms of the $G$-module (as a $G$-module).  Since the distributions that arise from universality theorems are certainly natural, we would expect them to share this sensitivity to extra structure, and thus it makes sense that cokernels of symmetric matrices, since as such they have natural symmetric pairings, should be distributed in a distribution that sees those pairings.

\begin{oproblem}
Prove that the cokernels of random symmetric matrices as in Theorem~\ref{W:symmat}, along with their pairings, are distributed as suggested by Clancy, Leake, and Payne \cite[Section 4]{Clancy2015a}.  One might naturally use moments of groups with pairings, and the corresponding moment problem, as in \cite[Section 8]{Lipnowski2020}.
\end{oproblem}

There are a few other classes of random groups that we know in this universality class.
The result \cite[Theorem 1.1]{Wood2017} extends Theorem~\ref{W:symmat} to cokernels of Erd\H{o}s--R\'{e}nyi random graph Laplacians, also known as sandpile groups or Jacobians of the graphs.  M\'esz\'aros  \cite[Theorem 1.2]{Meszaros2020} extends Theorem~\ref{W:symmat} to sandpile groups of $d$-regular graphs for $d\geq 3$ (unless $d$ is even and $p=2$, in which case a different distribution arises, likely reflecting further structure of the pairing).  Dunfield and Thurston \cite[Section 8.7]{Dunfield2006} show that the homology
$H_1(M,\F_p)$ for a 3-manifold from a random Heegaard splitting of genus $g$ as $g\ra\infty$ approaches the universal distribution of Theorem~\ref{W:symmat},
 or more precisely the pushforward of that distribution to elementary abelian $p$-groups under the map $A\mapsto A/pA$.  (See \cite[Corollary 9.4]{Wood2017} to see that this is indeed the pushforward.)  Forthcoming work of Sawin and the author finds the distribution more generally of 
 $H_1(M,\Z_p)$, along with the torsion linking pairing, of these random $3$-manifolds, and finds that it is in the natural distribution suggested by 
 Clancy, Leake, and Payne \cite[Section 4]{Clancy2015a}.  So the presence of the symmetric pairing from the torsion linking pairing explains why the homology of random $3$-manifolds appears in this universality class. 
 
\begin{oproblem}
Prove that the sandpile groups of Erd\H{o}s--R\'{e}nyi random graphs (or uniform random $d$-regular graphs) along with their pairings, are distributed as suggested by Clancy, Leake, and Payne \cite[Section 4]{Clancy2015a}.  
\end{oproblem} 
 

There are however, many more algebraic structures that are important in arithmetic statistics and other fields whose universality classes should be studied, such as random abelian groups with an action of a group, or random modules.
\begin{oproblem}\label{P:Gmods}
Prove an analog of Theorem~\ref{T:NWpadic} for $\Z_p[G]$-modules for a finite group $G$ (with $p\nmid |G|$).  
More generally (as would be related to the Cohen-Lenstra-Martinet heuristics for non-Galois fields, see \cite[Sections 7-8]{Wang2021}), prove an analog of Theorem~\ref{T:NWpadic} for random $\mathfrak{o}$-modules, where $\mathfrak{o}$ is a maximal order (over $\Z_p$) in a semisimple $\Q_p$-algebra.
\end{oproblem}

Note that the reduction of Problem~\ref{P:Gmods} mod $p$ is a question about matrices over finite fields.  So part of solving the above will include generalizing Theorem~\ref{T:NWmodp} from $\F_p$ to general finite fields $\F_q$.   In this more general case, the requirement that the $\xi_{i,j,n}$ are not concentrated at a single point is not sufficient, and must be replaced with something like  $\xi_{i,j,n}$ not concentrated on a translate of a subfield.  Kahn and Koml\'os \cite{Kahn2001} have shown universality of the singularity probability of a random $n\times n$ matrix over $\F_q$ under such a condition.

\begin{oproblem}
For $\mathfrak{o}$ a maximal order (over $\Z_p$) in a semisimple $\Q_p$-algebra, with an order two automorphism $\sigma$, such as $\mathfrak{o}$ being the ring of integers of the unramified quadratic extension of $\Q_p$, or $\mathfrak{o}=\Z_p \times \Z_p$, prove an analog of Theorem~\ref{W:symmat} for random $\sigma$-Hermitian matrices (i.e. $M$ such that $\sigma(M)=M^t$).
\end{oproblem}


\subsection{Random non-abelian groups}

We now turn to universality questions for non-abelian random groups, which are largely unstudied, but we expect contain much potential.  
One naturally starts with a free group (or free profinite group) $F_n$ and takes the quotient by independent random relations in some way that involves many independent choices for each relations.  As $n\ra\infty$, one hopes that the limiting distribution is somewhat insensitive to the distribution from which the relations are chosen. 
The first stumbling block when considering such questions is that it is less clear how to take a random relation built up from many independent choices.  When the relation was in $\F_p^n$ or $\Z_p^n$, we could just take each coordinate independently.  However, if $F_n$ is the free group (or free profinite group) on $n$ generators, there are not analogous coordinates in $F_n$.  In the case of random nilpotent groups, one might consider using Mal'cev coordinates.
Another way to characterize the probability measures on $\Z_p^n$ from which we drew relations above, e.g. in in Theorem~\ref{T:NWpadic}, is that they are not concentrated at a point in any finite simple quotient, so it may be interesting to consider the non-abelian version of that condition.  
While it is not so clear what the parameters for the universality class should be, one has a natural target for the universal distribution from a result of Liu and the author on the quotient of the free group by random relations.  Let $\mathcal{G}$ be the set of isomorphism classes of profinite groups with finitely many surjections to any finite group.
(This is $\mathcal{G}$ from Section~\ref{SS:nonabmom} with $\Gamma=1$, and we consider the same topology on it as defined there.)

\begin{theorem}[{\cite[Theorem 1.1]{Liu2020}}]\label{T:LW}
For every integer $u$, there is a random group $X_u$ in $\mathcal{G}$ whose measure is described explicitly on each basic open \cite[Equation (3.2)]{Liu2020}.
If $F_n$ is the free profinite group on $n$ generators, and $r_i$ are independent random elements of $F_n$ drawn from Haar measure, then
as $n\ra\infty$ the quotient 
$$F_n/\langle r_1,\dots,r_{n+u}\rangle \textrm{  weakly converge in distribution to } X_u.$$
\end{theorem}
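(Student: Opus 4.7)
The plan is to reduce the theorem to a moment computation together with Sawin's robust uniqueness result (Theorem~\ref{T:Sawin} with $\Gamma$ trivial), and in parallel to construct the candidate limit $X_u$ from those same moments.

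\emph{Step 1: Moments of the quotients.} Fix a finite group $H$ and set $X_n = F_n/\langle r_1,\dots,r_{n+u}\rangle$. A surjection $X_n\twoheadrightarrow H$ is the same data as a surjection $\phi\colon F_n\twoheadrightarrow H$ with $\phi(r_i)=1$ for every $i$, since a closed normal subgroup of $F_n$ contains $r_i$ iff it contains its normal closure. Haar measure on the free profinite group pushes forward to the uniform measure on every continuous finite quotient, and the $r_i$ are independent, so for each fixed $\phi\in\Sur(F_n,H)$ the events $\phi(r_i)=1$ are independent and each has probability $|H|^{-1}$. Hence
$$\E(\#\Sur(X_n,H)) = \#\Sur(F_n,H)\cdot|H|^{-(n+u)}.$$
Inclusion--exclusion over the (finitely many) maximal subgroups of $H$ shows $\#\Sur(F_n,H)/|H|^n\to 1$ exponentially fast in $n$, so the expectation converges to $|H|^{-u}$. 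These limits satisfy the $O(|H|^{O(1)})$ bound required by Sawin's theorem, for every integer $u$ (positive or negative).

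\emph{Step 2: Construction of $X_u$.} I would build $X_u$ as a compatible system of finite distributions on the basic opens. For each finite family $\mathcal{C}$ of finite groups, let $S_{\mathcal{C}}$ be the (finite) set of isomorphism classes of finite groups $H$ with $H^{\mathcal{C}}\cong H$. The equations
$$\sum_{H\in S_{\mathcal{C}}} p_H\,\#\Sur(H,J) = |J|^{-u}\qquad(J\in S_{\mathcal{C}})$$
form a finite linear system whose coefficient matrix, after $S_{\mathcal{C}}$ is ordered by $|H|$, is triangular with nonzero diagonal entries $\#\Aut(H)$, so the $p_H$ are uniquely determined. To secure $p_H\geq 0$ and the compatibility of these distributions across $\mathcal{C}'\subseteq\mathcal{C}$, I would identify $p_H$ with $\lim_{n\to\infty}\Prob(X_n^{\mathcal{C}}\cong H)$: each such probability is nonnegative, and Step~1 applied level by level forces the limit to satisfy the same linear system, hence to equal the unique solution $p_H$. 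Kolmogorov extension across the directed system of finite $\mathcal{C}$'s then assembles these finite distributions into a Borel probability measure $X_u$ on $\mathcal{G}$, with $\Prob(X_u^{\mathcal{C}}\cong H)=p_H$ explicit on each basic open $\{G:G^{\mathcal{C}}\cong H\}$.

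\emph{Step 3: Convergence.} Once $X_u$ is constructed with $\E(\#\Sur(X_u,H))=|H|^{-u}$ for every finite $H$, the moment convergence from Step~1 and Sawin's Theorem~\ref{T:Sawin} (trivial $\Gamma$) together give $\Prob(X_n^{\mathcal{C}}\cong H)\to\Prob(X_u^{\mathcal{C}}\cong H)$ for every finite $\mathcal{C}$ and $H$. Since the basic opens of $\mathcal{G}$ have precisely this form and are clopen, the equivalent characterization of weak convergence noted in the introduction upgrades this to $X_n\Rightarrow X_u$.

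The principal obstacle is non-negativity in Step~2: solving the moment system gives candidate values but does not automatically yield a probability measure. The cleanest repair is to read non-negativity off of the sequence $\Prob(X_n^{\mathcal{C}}\cong H)$ itself from Step~1, so the construction and convergence arguments end up interleaved rather than logically separable; a secondary technical point is making the uniform-generation estimate $\#\Sur(F_n,H)/|H|^n\to1$ sufficiently effective to control the finite-$\mathcal{C}$ limits without exchanging limit orders.
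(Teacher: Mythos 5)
Your proposal takes a genuinely different route from the Liu--Wood proof of this theorem, but it has real gaps.

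Step~1 is correct: for a fixed surjection $\phi\colon F_n\twoheadrightarrow H$, the events $r_i\in\ker\phi$ are independent with probability $|H|^{-1}$ each, so $\E(\#\Sur(X_n,H))=\#\Sur(F_n,H)|H|^{-(n+u)}\to|H|^{-u}$, and the bound $|H|^{-u}=O(|H|^{O(1)})$ holds for every fixed integer $u$.

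The trouble is in Step~2, and your acknowledgment at the end understates it. First, $S_{\mathcal{C}}$ is \emph{not} finite in general: the closure $\bar{\mathcal{C}}$ is taken under products, so already $\mathcal{C}=\{\Z/p\Z\}$ gives $S_{\mathcal{C}}=\{(\Z/p\Z)^k: k\geq 0\}$, which is infinite. The matrix $(\#\Sur(H,J))_{J,H}$ is upper triangular (nonzero only when $|H|\geq|J|$), so the equation for a small $J$ involves infinitely many $p_H$, and there is no ``finite linear system'' one can invert by substitution; uniqueness of a formal solution is a M\"obius-inversion statement whose convergence must be proved, which is precisely what the explicit formula in \cite[Eq.~(3.2)]{Liu2020} accomplishes. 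Second, and more seriously, defining $p_H:=\lim_n\Prob(X_n^{\mathcal{C}}\cong H)$ presupposes the existence of that limit, which is essentially the content of the theorem; since $S_{\mathcal{C}}$ is infinite, the $(\Prob(X_n^{\mathcal{C}}\cong\cdot))_n$ live in an infinite-dimensional simplex and compactness does not give you subsequential limits directly. Third, even granting the pointwise limits, you need tightness $\sum_H p_H=1$ before Kolmogorov extension applies, and this does not follow from moment convergence alone (indeed, for $u\leq 0$ the moments $|J|^{-u}$ do not decay, so you get no easy tail bound).

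This makes Step~3 circular as written: Sawin's Theorem~\ref{T:Sawin} requires a candidate $Y$ \emph{with the limiting moments} as input, which is exactly what Step~2 fails to produce. If you already had $X_u$ constructed with $\E(\#\Sur(X_u,H))=|H|^{-u}$ (e.g.\ via the explicit formula from \cite{Liu2020}, verified by an argument like Lemma~\ref{L:momX}), then your Steps~1 and~3 would indeed give a clean and shorter proof of the weak convergence assertion --- and this moment-problem approach is a legitimate and illuminating alternative made possible by Sawin's later result. But it cannot replace the construction, which in \cite{Liu2020} is done directly by computing $\Prob(X_n^{\mathcal{C}}\cong H)$ explicitly, proving convergence of the resulting infinite sums, and verifying tightness, rather than going through moments at all.
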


As in \cite[Section 14]{Liu2020}, one can consider usual (not profinite) free group $F_n$ and take random relations obtained from a random walk on $F_n$.  However, as the length of the random walk goes to infinity, these relation become equidistributed with respect to Haar measure, and so this is not really a new example for the universality class.  

\begin{oproblem}
Find some more general hypotheses for a distribution on $F_n$ from which one can draw independent relations so that Theorem~\ref{T:LW} still holds.
\end{oproblem}

While it would be nice to have hypotheses that allow a wide range of distributions, i.e. a universality theorem, it would even be interesting  to find other specific random groups converging to the distributions $X_u$.    We give one example here, which is a non-abelian analog of the result of Friedman and Washington on cokernels of $I-M$, where $M$ is random from the Haar measure on $\GL_n(\Z_p)$.  

\begin{theorem}\label{T:Aut}
Let $F_n$ be the free profinite group on $n$ generators, and let $Aut(F_n)$ be the group of (continuous) automorphisms of $F_n$, which is a profinite group
\cite[Corollary 4.4.4]{Ribes2010}.  Let $I\in Aut(F_n)$ be the identity and let $\alpha_n$ be a random element of $Aut(F_n)$ with respect to Haar measure.
Then, as $n\ra \infty$
$$F_n/\langle \alpha_n(x)x^{-1} | x\in F_n\rangle \textrm{  weakly converge in distribution to } X_0,$$
(where $X_0$ is defined as in Theorem~\ref{T:LW}).
\end{theorem}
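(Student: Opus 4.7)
The plan is to verify the hypotheses of Corollary~\ref{C:Sawin} (applied with the acting group $\Gamma$ trivial), namely that for every finite group $H$ the moments $\E(\#\Sur(G_n, H))$ converge to $\E(\#\Sur(X_0, H))$, where $G_n := F_n/\langle\alpha_n(x)x^{-1}\mid x\in F_n\rangle$.  First, from Theorem~\ref{T:LW} applied with $u=0$, one has
\[
\E(\#\Sur(X_0, H)) = \lim_{n\to\infty}\#\Sur(F_n, H)/|H|^n = 1,
\]
because the probability that $n$ independent uniform elements of $H$ topologically generate $H$ tends to $1$.  In particular the required bound $M_H = O(|H|^{O(1)})$ holds trivially.

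Next, I would compute $\E(\#\Sur(G_n, H))$ via a fixed-point count.  A continuous surjection $\tilde\phi\colon F_n\to H$ factors through $G_n$ iff $\tilde\phi(\alpha_n(x)x^{-1}) = 1$ for all $x\in F_n$, which is equivalent to $\tilde\phi\circ\alpha_n = \tilde\phi$.  Thus $\#\Sur(G_n,H)$ is the number of fixed points of $\alpha_n$ for the right action of $\Aut(F_n)$ on the finite set $\Sur(F_n,H)$ defined by $\alpha\cdot\tilde\phi := \tilde\phi\circ\alpha$.  This continuous action of the profinite group $\Aut(F_n)$ on a finite set factors through a finite quotient $Q$, and Haar measure on $\Aut(F_n)$ pushes forward to uniform measure on $Q$; Burnside's lemma applied to $Q$ therefore yields
\[
\E(\#\Sur(G_n,H)) = \#\big(\Sur(F_n,H)/\Aut(F_n)\big).
\]

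It then suffices to show this orbit count equals $1$ whenever $\Sur(F_n,H)$ is nonempty.  Given $\tilde\phi_1,\tilde\phi_2\in\Sur(F_n,H)$, set $h_i:=\tilde\phi_1(x_i)$, so that $(h_1,\dots,h_n)$ generates $H$.  The Gasch\"utz lemma for profinite groups (using that $F_n$ is topologically $n$-generated and $H$ is finite) supplies $y_i\in\tilde\phi_2^{-1}(h_i)$ such that $(y_1,\dots,y_n)$ topologically generates $F_n$.  The universal property of the free profinite group extends $x_i\mapsto y_i$ to a unique continuous endomorphism $\alpha$ of $F_n$, which is surjective because $(y_i)$ generates and hence an automorphism because finitely generated profinite groups are Hopfian.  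By construction $\tilde\phi_2\circ\alpha$ agrees with $\tilde\phi_1$ on each $x_i$, so $\tilde\phi_2\circ\alpha = \tilde\phi_1$, proving transitivity.  Consequently $\E(\#\Sur(G_n, H)) = 1$ once $\Sur(F_n,H)\ne\emptyset$, so in either case the limit equals $1$, and Corollary~\ref{C:Sawin} delivers the weak convergence of $G_n$ to $X_0$.

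The main obstacle is the transitivity step, where one must genuinely use that $\Aut(F_n)$ here denotes the \emph{continuous} automorphisms of the free \emph{profinite} group: for the discrete free group of rank $n$ acted on by its discrete automorphism group, the analogous transitivity can fail even in the stable range, reflecting the existence of finite groups with more than one stable Nielsen class ($T$-system) of generating $n$-tuples.  The larger supply of continuous automorphisms available in the profinite setting is precisely what forces the orbit count to collapse to $1$; one also has to check carefully that the continuous action of $\Aut(F_n)$ on the finite set $\Sur(F_n,H)$ does indeed factor through a finite quotient so that Burnside applies, but this follows from compactness of $\Aut(F_n)$ together with continuity.
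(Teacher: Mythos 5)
Your computation of the moments of the quotient groups $G_n = F_n/\langle\alpha_n(x)x^{-1}\mid x\in F_n\rangle$ is essentially the paper's argument: the paper likewise reduces $\#\Sur(G_n,H)$ to counting $\phi\in\Sur(F_n,H)$ with $\phi\circ\alpha_n=\phi$, then uses transitivity of the $\Aut(F_n)$-action on $\Sur(F_n,H)$ (citing Lubotzky, rather than re-deriving it from Gasch\"utz's lemma and Hopficity as you do) to conclude $\Prob(\phi\circ\alpha_n=\phi)=|\Sur(F_n,H)|^{-1}$, hence $\E(\#\Sur(G_n,H))=1$ once $H$ is $n$-generated. Your Burnside phrasing and the paper's direct transitivity argument are the same calculation dressed differently, and your extra remark on $T$-systems is correct context.

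The genuine gap is in your very first step, the claim that Theorem~\ref{T:LW} ``applied with $u=0$'' gives $\E(\#\Sur(X_0,H))=1$. Theorem~\ref{T:LW} asserts only \emph{weak convergence in distribution} of $F_n/\langle r_1,\dots,r_n\rangle$ to $X_0$. The functional $G\mapsto\#\Sur(G,H)$ is unbounded on the space of isomorphism classes, so weak convergence does not by itself yield $\lim_n\E(\#\Sur(F_n/\langle r_1,\dots,r_n\rangle,H))=\E(\#\Sur(X_0,H))$; one has to rule out mass escaping to groups with many surjections to $H$. The paper flags precisely this point---``it does require some argument to interchange the limit in $n$ and the expectation and obtain these same moments for $X_0$''---and devotes Lemma~\ref{L:momX} to it. That lemma conditions on the finite quotient $Z_n^{\calC_\ell}$, expresses the moment as a sum of nonnegative terms $f_n(G,\ell)$, verifies the hypotheses of a dominated-convergence-type result \cite[Lemma 5.10]{Liu2019}, and then passes to the limit in $n$. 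Without that argument (or an equivalent uniform-integrability/tightness estimate), your identification of the target moments is unjustified, and Corollary~\ref{C:Sawin} cannot yet be invoked. The rest of your proof, once the moments of $X_0$ are in hand, is sound.
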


We will compute the moments of these random groups, and then apply Corollary~\ref{C:Sawin} from Sawin's result on the moment problem.  To do that, we first need the moments of $X_0$.  While it is an easy to see that for independent Haar relations $r_i$, we have 
$$\lim_{n\ra\infty} \E(\#\Sur(F_n/\langle r_1,\dots,r_{n+u}\rangle,A))=1,$$ it does require some argument to interchange the limit in $n$ and the expectation and obtain these same moments for $X_0$.  

\begin{lemma}\label{L:momX}
Let $X_0$ be defined as in Theorem~\ref{T:LW}.  Then for any finite group $H$, we have
$$
\E(\Sur(X_0,H))=1.
$$
\end{lemma}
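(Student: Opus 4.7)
The plan is to compute the moments of the finite-$n$ approximations $G_n := F_n/\langle r_1,\dots,r_n\rangle$ exactly, show they tend to $1$, and then transfer this to $X_0$ via the weak convergence in Theorem~\ref{T:LW} combined with a uniform second moment bound. For the first step, a surjection $G_n \twoheadrightarrow H$ is the same as a surjection $\phi \colon F_n \twoheadrightarrow H$ satisfying $\phi(r_i)=e$ for all $i$, and since $\phi$ is surjective, $\Prob(\phi(r)=e)=|H|^{-1}$ for Haar-random $r\in F_n$, with the $r_i$ independent. Writing $N_n(H)$ for the number of $n$-tuples generating $H$, we thus get
\begin{equation*}
\E(\#\Sur(G_n,H)) = \frac{N_n(H)}{|H|^n}.
\end{equation*}
By inclusion-exclusion over subgroups, $N_n(H)=|H|^n + O\bigl((\max_{K<H}|K|)^n\bigr)$, so the right-hand side tends to $1$ as $n\to\infty$.

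Next I would bound the second moment uniformly in $n$. For surjections $\phi_1,\phi_2\colon F_n\twoheadrightarrow H$, form $\psi=(\phi_1,\phi_2)\colon F_n\to H\times H$, whose image is a subgroup $K\leq H\times H$ surjecting onto each factor. The probability that $\psi(r_j)=(e,e)$ for all $j$ is $|K|^{-n}$, giving
\begin{equation*}
\E(\#\Sur(G_n,H)^2) = \sum_{\substack{K\leq H\times H\\ K\twoheadrightarrow H\text{ on each factor}}} \frac{N_n(K)}{|K|^n} \leq |\{K\leq H\times H : K\twoheadrightarrow H\text{ on each factor}\}|,
\end{equation*}
which is finite and independent of $n$. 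In particular, the family $\#\Sur(G_n,H)$ is uniformly integrable.

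Finally I would transfer to $X_0$. The function $G\mapsto \#\Sur(G,H)$ depends only on $G^{\{H\}}$ and is a nonnegative integer combination of indicators of basic clopen sets of the form $\{G:G^{\{H\}}\isom L\}$. Theorem~\ref{T:LW} together with the characterization of weak convergence on basic opens recalled in the notation section gives $\Prob(G_n^{\{H\}}\isom L)\to \Prob(X_0^{\{H\}}\isom L)$ for every finite $L$. Fatou's lemma then yields $\E(\#\Sur(X_0,H))\leq\liminf_n \E(\#\Sur(G_n,H))=1$, while the uniform $L^2$ bound above upgrades this to equality, giving $\E(\#\Sur(X_0,H))=1$. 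The main obstacle is this last limit-interchange: without the uniform second moment bound, one only obtains an inequality, so the identification of the limiting moment with $1$ genuinely uses the combinatorial calculation of $\E(\#\Sur(G_n,H)^2)$.
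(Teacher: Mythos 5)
Your first and second moment calculations for $Z_n := F_n/\langle r_1,\dots,r_n\rangle$ are correct, and the idea of transferring to $X_0$ via weak convergence together with a uniform $L^2$ (hence uniform integrability) bound is a legitimate strategy. However, there is a genuine gap in the transfer step: you assert that $G\mapsto \#\Sur(G,H)$ is a nonnegative integer combination of indicators of the basic clopen sets $\{G : G^{\{H\}}\isom L\}$ as $L$ runs over finite groups, but this is false on the (closed, nonempty) subset of $\mathcal{G}$ consisting of $G$ with $G^{\{H\}}$ infinite. On that set the proposed sum of indicators evaluates to $0$, while $\#\Sur(G,H)$ need not vanish (e.g.\ $G=H\times\Z_p$ with $H$ a nontrivial $p$-group). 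Equivalently, $\#\Sur(-,H)$ is locally constant on $\bigcup_L \{G:G^{\{H\}}\isom L\}$ but is discontinuous at points where $G^{\{H\}}$ is infinite, so the continuous mapping theorem plus uniform integrability does not apply directly; you would still need to show that (i) $X_0^{\{H\}}$ is almost surely finite so that the discontinuity set is $\mu_{X_0}$-null, and (ii) $\Prob(Z_n^{\{H\}}\text{ infinite})\to 0$ so the escaping mass contributes nothing to $\E(\#\Sur(Z_n,H))$. Both facts are true but need to be extracted from the Liu--Wood construction, and they are precisely what your argument silently relies on.

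The paper's proof avoids this issue by a different mechanism: instead of a blanket second-moment bound and abstract UI, it decomposes $\E(\#\Sur(Z_n,H))$ into pieces $f_n(G,\ell)=\E(\#\Sur(Z_n,H)\cdot\mathbbm{1}_{Z_n^{\calC_\ell}\isom G})$, computes each piece exactly by conditioning on $\ker\overline{\phi}$ at the finite level $\calC_\ell$, and then invokes \cite[Lemma 5.10]{Liu2019}, a tailored interchange-of-limits lemma whose hypotheses (dominated-convergence-type estimates on the $f_n(G,\ell)$) are verified directly from the explicit formulas. In particular, the paper never needs the second moment of $\#\Sur(Z_n,H)$; the required tightness is built into the hypotheses of that lemma. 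Your second-moment bound is a clean alternative route to the tightness estimate, and if you supplement it with a verification that $\Prob(X_0^{\{H\}}$ is finite$)=1$ (hence $\Prob(Z_n^{\{H\}}\text{ finite})\to 1$ by weak convergence on basic opens) the argument would close.
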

\begin{proof}
We follow the strategy of \cite[Theorem 6.2]{Liu2019} adapted to our situation.  Let $F_n$ be the free profinite group on $n$ generators and let $Z_n$ be the random profinite group $F_n/\langle r_1,\dots,r_{n}\rangle$, where the $r_i$ are random elements of $F_n$ from Haar measure.
For any positive integer $\ell$, let $\calC_\ell$ be the set of finite groups of order at most $\ell$.
We consider the following function defined for any positive integer $\ell$ and any finite group $G$ of level with $G^{\calC_\ell}\isom G$, 
	\[
		f_n(G, \ell) = \E( |\Sur(Z_n, H)| \times \mathbbm{1}_{{Z_n}^{\calC_\ell} \simeq G}),
	\]
	where $\mathbbm{1}_{{Z_n}^{\calC_\ell} \simeq G}$ is the indicator function of
 ${Z_n}^{\calC_\ell} \simeq G$. 
 We let $\pi_{\calF_n}\colon  \calF_n \to (\calF_n)^{\calC_\ell}$ and $\pi_H: H \to H^{\calC_\ell}$ be the natural quotient maps. Each $\phi\in \Sur(\calF_n, H)$ induces a map $\overline{\phi} \in \Sur((\calF_n)^{\calC_\ell}, H^{\calC_\ell})$.
 By the definition of random group $Z_{n}$, we have 
	\begin{eqnarray}
		&&\E( |\Sur(Z_n, H)| \times \mathbbm{1}_{{Z_n}^{\calC_\ell} \simeq G})\label{eq:long-1}\\
		&=& \sum_{\phi \in \Sur(\calF_n, H)} \Prob( r_1,\dots,r_{n} \in \ker \phi \text{ and } (\calF_n)^{\calC_\ell}/
		\langle \pi_{\calF_n}(r_1),\dots,\pi_{\calF_n}(r_{n}) \rangle
		 \simeq G).\nonumber 
	\end{eqnarray}
 Given $\phi\in \Sur_{\Gamma}(\calF_n, H)$, and $y_1,\dots,y_n\in \ker \overline{\phi}$,
 we have that
		\[
		\Prob(r_1,\dots,r_{n} \in \ker \phi  \mid 
\pi_{\calF_n}(r_i)=y_i \textrm{ for all $i$})= \frac{|H^{\calC_\ell}|^{n}}{|H|^{n}}.
	\]
This follows from the straightforward calculation that $|\pi_{F_n}^{-1}(y_i) \cap \ker \phi|=|F_n||H^{\calC_\ell}|/(|F_n^{\calC_\ell}||H|)$
Then, summing over choices of $y_i\in \ker\overline{\phi}$ such that $(\calF_n)^{\calC_\ell}/		\langle y_1,\dots,y_{n} \rangle \isom G$, we have
\begin{align*}
&\Prob\left(
\begin{aligned}
&r_1,\dots,r_{n} \in \ker \phi \text{ and } \\
&(\calF_n)^{\calC_\ell}/		\langle \pi_{\calF_n}(r_1),\dots,\pi_{\calF_n}(r_n) \rangle \isom G
\end{aligned}
 \,\Bigg|\,
  \begin{aligned}
&\pi_{\calF_n}(r_i) \in \ker \overline{\phi} \text{ for all $i$, and } \\
&(\calF_n)^{\calC_\ell}/		\langle \pi_{\calF_n}(r_1),\dots,\pi_{\calF_n}(r_n) \rangle \isom G
\end{aligned}
\right)
=&\frac{|H^{\calC_\ell}|^{n}}{|H|^{n}}.
\end{align*}
Thus
	\eqref{eq:long-1} is equal to 
	\begin{eqnarray}
		&& \frac{|H^{\calC_\ell}|^{n}}{|H|^{n}} \sum_{\phi \in \Sur(\calF_n, H)} \Prob\left( \begin{aligned}
&\pi_{\calF_n}(r_i) \in \ker \overline{\phi} \text{ for all $i$, and } \\
&(\calF_n)^{\calC_\ell}/		\langle \pi_{\calF_n}(r_1),\dots,\pi_{\calF_n}(r_n) \rangle \isom G
\end{aligned}\right) \nonumber \\
		&=& \frac{|H^{\calC_\ell}|^{n}}{|H|^{n}}  \sum_{\phi \in \Sur(\calF_n, H)} \frac{\#\left\{(\tau, \pi) \,\Bigg|\, \begin{aligned} 
		&\tau\in \Sur((\calF_n)^{\calC_\ell}, G)\\
		&\pi \in \Sur(G, H^{\calC_\ell}) \\
		& \text{ and } \pi \circ \tau = \overline{\phi}
		\end{aligned}\right\}}{|\Aut(G)| |G|^{n}} P_{0,n}(U_{\calC_\ell, G}), \label{eq:long-2}
	\end{eqnarray}
where $P_{0,n}(U_{\calC_\ell, G})$ is defined in \cite{Liu2020} just before Lemma 9.5, and is the probability that $n$ independent uniform random elements in the kernel of $(\calF_n)^{\calC_\ell} \ra G$ generate that kernel as a normal subgroup (as worked out in the proof of \cite[Theorem 8.1]{Liu2020}). 
(To explain the above equality a bit more: if $(\calF_n)^{\calC_\ell}/		\langle \pi_{\calF_n}(r_1),\dots,\pi_{\calF_n}(r_n) \rangle \isom G$
then there is a choice of $\tau\in \Sur((\calF_n)^{\calC_\ell}, G)$ inducing that isomorphism, whose $\Aut(G)$ orbit is unique, and $\bar{\phi}$ must factor through $\tau$ since $\pi_{\calF_n}(r_i) \in \ker \overline{\phi}$.  Given a $\tau$, the probability that the relations are in $\ker \tau$ and generate it as a normal subgroup is $|G|^{-n} P_{0,n}(U_{\calC_\ell, G})$.)
 	
	On the other hand, let $\overline\phi \in \Sur((\calF_n)^{\calC_\ell}, H^{\calC_\ell})$. Then the composition map $\rho:= \overline{\phi} \circ \pi_{\calF_n}$ is a  surjection  $\calF_n\to H^{\calC_\ell}$. The number of $\phi\in \Sur(\calF_n, A)$ such that $\phi$ induces  $\overline{\phi}$ we denote by $\Sur(\rho, \pi_H)$. It is easy to see that
		\[
			\frac{|\Sur((\calF_n)^{\calC_\ell}, G)|}{|G|^n}\leq1\quad \textrm{and} \quad 
			\lim_{n\to \infty} \frac{|\Sur((\calF_n)^{\calC_\ell}, G)|}{|G|^n}=1,
		\] and similarly
		\[
\frac{|\Sur(\rho, \pi_H)|}{|H|^n |H^{\calC_\ell}|^{-n}}\leq 1, \quad \lim_{n \to \infty} \frac{|\Sur(\rho, \pi_H)|}{|H|^n |H^{\calC_\ell}|^{-n}}=1.
		\]
		Then by \eqref{eq:long-2}, we obtain that $f_n(G, \ell)=g_n(G, \ell) P_{0,n}(U_{\calC_\ell,G})$ where
		\[
		g_n(G, \ell)=\frac{|H^{\calC_\ell}|^{n} |\Sur(\rho, \pi_H)| |\Sur((\calF_n)^{\calC_\ell},G)| |\Sur(G, H^{\calC_\ell})|}{|H|^{n} |G|^{n} |\Aut(G)|}
		\]
		\[
			\text{and}\quad g(G,\ell):=\lim_{n\to \infty} g_n(G, \ell) =\frac{|\Sur(G, H^{\calC_\ell})|}{  |\Aut(G)|}.
		\]
		Now we apply \cite[Lemma 5.10]{Liu2019}, where condition (1) holds by definition, (2) from the above, and (3) follows from the definition of $f_n(G,\ell)$.
This allows us to conclude, for every $\ell$
		\begin{equation}\label{eq:long-3}
			\sum_{\substack{\text{$G$}\\ G^{\calC_\ell}\isom G  }} \lim_{n\to\infty} f_n(G, \ell) =\lim_{n\to \infty} f_n(\text{trivial group}, 1) = \lim_{n\to \infty} \E(|\Sur(Z_{n}, H)|) = 1.
		\end{equation}
		When $\ell$ is sufficiently large such that $H^{\calC_\ell}\isom H$, 
		\[
			\lim_{n\to \infty} f_n(G, \ell) = \lim_{n\to \infty} |\Sur(G, H)| \Prob((Z_{n})^{\calC_\ell} \simeq G) = |\Sur(G, H)| \Prob((X_0)^{\calC_\ell} \simeq G),
		\]
where the last equality is by Theorem~\ref{T:LW}.  Hence \eqref{eq:long-3} gives the desired result in the lemma.
\end{proof}

\begin{proof}[Proof of Theorem~\ref{T:Aut}]
We compute the moments of $F_n/\langle \alpha_n(x)x^{-1} | x\in F_n\rangle$.
Consider a fixed finite group $H$.  If $H$ can be generated by $n$ elements, then there are some number of surjections $\phi: F_n \ra H$.
Those surjections that factor through the quotient $F_n/\langle \alpha_n(x)x^{-1} | x\in F_n\rangle$ are exactly those $\phi$ such that $\phi\alpha=\phi$.
So
$$
\E(\#\Sur(F_n/\langle \alpha_n(x)x^{-1} | x\in F_n\rangle,H ))=\sum_{\phi\in \Sur(F_n,H)} \Prob(\phi\alpha=\alpha).
$$
The action of $\Aut(F_n)$ on $\Sur(F_n,H)$ is transitive \cite[Proposition 2.2]{Lubotzky2001}, and factors through a finite group.
So $\Prob(\phi\alpha=\alpha)=|\Sur(F_n,H)|^{-1}$.  Thus, as long as $H$ can be generated by $n$ elements, we have
$$
\E(\#\Sur(F_n/\langle \alpha_n(x)x^{-1} | x\in F_n\rangle,H ))=1.
$$
Thus we can use Theorem~\ref{T:Sawin} and Lemma~\ref{L:momX} to conclude the theorem.
\end{proof}

Of course, if any kind of universality result can be proven for non-abelian random groups, it would then be interesting to extend the methods to particular non-abelian groups with additional structure that are arising in number theory and topology.  
So far the applications of these sort of universality methods for random groups have largely been in combinatorics.  We expect that as the methods become developed, there will be further applications, including in number theory and topology.


\newcommand{\etalchar}[1]{$^{#1}$}
\def\cprime{$'$}

\end{document}